%% file: main.tex
\documentclass[a4paper]{article}
\usepackage{a4wide}
\input{commands.tex}
\usepackage{fancyhdr}
\usepackage[bbgreekl]{mathbbol}
\usepackage{authblk}

\author[1]{Florian Oberender and Thorsten Hohage}
\affil[1]{Institut f\"ur Numerische und Angewandte Mathematik, Georg-August Universität Göttingen}
\title{Quantum relative entropy regularization for quantum state tomography%
\footnote{Support from Deutsche Forschungsgemeinschaft (DFG) through  CRC 1456 project 432680300 is gratefully acknowledged.}}

\begin{document}

\maketitle
\begin{abstract}
\noindent
The density matrix is a positive semidefinite operator of trace $1$ characterizing 
the state of a quantum system. 
We consider the inverse problem to reconstruct such density matrices from 
indirect measurements, also known as quantum state tomography. 
To solve such inverse problems in high or infinite dimensional settings, we study 
variational regularization using the quantum relative entropy as 
penalty functional. Quantum relative entropy is an analog of the 
well-known  maximum entropy functional 
with compositions of functions replaced by the spectral functional calculus. 
The main aim of this paper is to establish the regularizing property of this scheme. 
As a crucial intermediate step, we establish lower semi-compactness of the penalty functional with respect to the weak-\(*\)-topology. 
Moreover, we compute the subgradient, proximal operator, and conjugate functional of the quantum relative entropy on finite dimensional spaces. 
This enables us to apply iterative algorithms from convex optimization to 
solve the regularized problems numerically. 
To show the validity and practical value of our results, we apply our theory to the examples of Photon-Induced Near-field Electron Microscopy (PINEM) 
and to optical homodyne tomography.
\\

\noindent
\textbf{MSC:} 65J20, 
81P18,
81P17, 
65K10 
\\

\noindent%
\textbf{Keywords:} regularization theory, quantum state tomography, 
quantum relative entropy, proximal operator
\end{abstract}


\section{Introduction}\label{sec:introduction}
\input{introduction}
\section{Regularizing property of quantum relative entropy}\label{sec:regularization}
\input{regularization}

\section{Algorithmic approaches}\label{sec:algorithms}
\input{algorithms}

\section{Applications to quantum state reconstruction}\label{sec:phy_prob}
\input{reconstruction}
\section{Conclusions}
\input{conclusions}
\section*{Data Availability Statement}
The code associated with this article is available in `GRO.data', under the reference

\noindent https://doi.org/10.25625/YMGX29.
\appendix
\section{Appendix}\label{sec:appendix}
\input{appendix}
\bibliographystyle{abbrv}
\bibliography{references}
\end{document}

%% file: commands.tex
\usepackage[utf8]{inputenc}
\usepackage{amsmath,amsthm,amssymb,amsfonts}
\usepackage{todonotes}
\usepackage{mathrsfs}
\usepackage{ifthen}
\usepackage{enumitem}
\usepackage{comment}
\usepackage{subcaption}
\usepackage{hyphsubst}
\usepackage{xcolor}
\usepackage{calc}
\usepackage{graphicx}
\usepackage{pgfplots}
\pgfplotsset{compat=1.5.1}
\usepackage{hyperref}
\usepackage[toc,page]{appendix}
\usepackage{cleveref}
\usepackage{leftindex}
\usepackage{algorithm}
\usepackage{algpseudocode}
\usepackage[bbgreekl]{mathbbol}
\DeclareSymbolFontAlphabet{\mathbbm}{bbold}
\DeclareSymbolFontAlphabet{\mathbb}{AMSb}
\usepackage{bbold}

\renewcommand{\Im}{\operatorname{Im}}

\makeatletter
\def\@hspace#1{\begingroup\setlength\dimen@{#1}\hskip\dimen@\endgroup}
\makeatother

\newtheorem{theorem}{Theorem}[section]
\newtheorem{lemma}[theorem]{Lemma}
\crefname{lemma}{lemma}{lemmas}
\Crefname{lemma}{Lemma}{Lemmas}

\crefname{definition}{definition}{definitions}
\Crefname{definition}{Definition}{Definition}
\newtheorem{remark}[theorem]{Remark}
\newtheorem{corollary}[theorem]{Corollary}
\crefname{corollary}{corollary}{corollary}
\Crefname{corollary}{Corollary}{Corollaries}

\crefname{proposition}{proposition}{propositions}
\Crefname{proposition}{Proposition}{Propositions}

\def\gobs{g^{\text{obs}}}
\def\gdag{g^{\dagger}}
\def\predual{^{\#}}
\def\Bop{\mathcal{B}}
\def\Kop{\mathcal{K}}
\def\BopI{\mathcal{B}_{1}}
\def\BopII{\mathcal{B}_{2}}
\def\BopIpsd{\tilde{\mathcal{B}}_{1}^{+}}

\def\Herm{\text{Herm}}
\def\euler{\mathrm{e}}
\def\iu{\mathrm{i}} 

\newcommand{\dd}{\mathrm{d}}

\newcommand{\bpm}{\begin{pmatrix}}
\newcommand{\epm}{\end{pmatrix}}

\DeclareMathOperator{\dom}{dom}

\DeclareMathOperator{\spn}{span}

\DeclareMathOperator{\tr}{tr}
\DeclareMathOperator{\ran}{ran}

\DeclareMathOperator{\KL}{KL}
\DeclareMathOperator{\QKL}{QKL}
\DeclareMathOperator{\argmin}{argmin}
\DeclareMathOperator{\prox}{prox}

\newcommand{\setC}{\mathbb{C}}

\newcommand{\setN}{\mathbb{N}}

\newcommand{\setR}{\mathbb{R}}

\newcommand{\setZ}{\mathbb{Z}}

\newcommand{\calB}{\mathcal{B}}

\newcommand{\calF}{\mathcal{F}}

\newcommand{\calH}{\mathcal{H}}

\newcommand{\calJ}{\mathcal{J}}
\newcommand{\calK}{\mathcal{K}}
\newcommand{\calL}{\mathcal{L}}

\newcommand{\calN}{\mathcal{N}}

\newcommand{\calR}{\mathcal{R}}
\newcommand{\calS}{\mathcal{S}}

\newcommand{\calX}{\mathcal{X}}
\newcommand{\calY}{\mathcal{Y}}

\definecolor{brickred}{rgb}{0.8, 0.25, 0.33}
\definecolor{bostonuniversityred}{rgb}{0.8, 0.0, 0.0}
\definecolor{cornellred}{rgb}{0.7, 0.11, 0.11}
\definecolor{corn}{rgb}{0.98, 0.93, 0.36}
\definecolor{schoolbusyellow}{rgb}{1.0, 0.85, 0.0}
\definecolor{TUblue}{rgb}{0,102,153}
\colorlet{TUbluelight}{TUblue!30!white}

%% file: introduction.tex
Quantum state tomography aims at determining the state of a quantum system by reconstructing its density matrix through a sequence of different measurements. It is a well established technique in quantum optics where heterodyne or homodyne measurements can be used to obtain the density matrix using different reconstruction techniques \cite{Leonhardt:95,DAriano:03}. 
More recently, quantum state tomography for electrons has been established using photon induced near field electron microscopy (PINEM)\cite{Priebe:17}.

Density matrices are positive semidefinite operators $\rho:\cal{H}\to \cal{H}$ of trace $1$ on an 
underlying complex, separable Hilbert space $\cal{H}$.  We will denote the set 
of positive semidefinite operators $\rho:\calH\to\calH$ which are trace class by $\BopIpsd$.  
Only quantities of the form $(e_l,\rho e_l)$ can be measured directly. 
Here $e_l\in\cal{H}$ is an eigenvector of a (self-adjoint)  
operator associated to the observable. These quantities can be seen 
as diagonal elements of a matrix representation of $\rho$ corresponding to the eigen-basis. 
Off-diagonal elements can only be observed indirectly by measuring an evolution of $\rho$. 
The time evolution of a density matrix is described by the von Neumann-Liouville equations, and 
solutions of this equation with initial value $\rho$ are of the form $U_{\theta}\rho U_{\theta}^{*}$ for some unitary operator $U_{\theta}:\cal{H}\to\cal{H}$. 
We assume that $U_{\theta}$ is parameterized by some interaction parameter $\theta$ in a bounded 
subset $I\subset\setR$. This leads to forward operators of the form 
\begin{align}\label{al:general_op}
T:\BopIpsd\rightarrow L^{2}(I,\ell^{p}(\setZ)),\qquad 
    (T\rho)(\theta,l)=\langle e_{l},U_{\theta}\rho U_{\theta}^{*}e_{l}\rangle
\end{align}
where it can sometimes be useful to use operators $U_{\theta}$ of slightly different form. 
The inverse problem of reconstructing a density matrix $\rho$ from the measured data $T\rho$ is 
typically ill-posed \cite{Shi:20,Aubry:08}. 
It poses the additional challenge that estimators should be 
physical density matrices, i.e., positive semi-definite with trace equal to $1$. 
Whereas it is easy to compute the metric projection of a given estimator onto 
this set, such a procedure may not fully exploit the strongly stabilizing effect that 
the positive semidefiniteness and the trace constraints often exhibit. 
For many problem instances, the regularizing effect of these constraints is already 
sufficient, and iterative schemes that ensure these constraints without 
employing any additional regularization, except possibly for early stopping,  
are often successfully used  in practice \cite{Lvovsky:09}. 
The SQUIRRELS algorithm proposed in \cite{Priebe:17} uses quadratic regularization 
by a  Hilbert-Schmidt (or Frobenius) norm together with positive semidefinite programming.  

Instead of the Hilbert-Schmidt norm that was mainly chosen for algorithmic convenience, 
we propose to use the quantum relative entropy as a physically more meaningful penalty functional. Viewing density matrices as a non-commutative version of probability distributions, the quantum relative entropy can be understood as an equivalent of the Kullback-Leibler divergence. 
For operators $\rho,\rho_0:\calH\to\calH$ it is defined by
\begin{align}\label{eq:defi_QKL}
    \QKL(\rho,\rho_{0}):=\begin{cases}
        \infty\quad\text{if }\rho \notin\BopIpsd \text{ or }\rho_{0}\notin\BopIpsd \text{ or }\ker\rho_{0}\nsubseteq\ker\rho\\
        \tr(\rho_{0}-\rho+\rho\ln\rho-\rho\ln\rho_{0})\quad\text{else},
    \end{cases}
\end{align}
see \cite{Lindblad:74}. 
Here \(\rho\ln\rho-\rho\ln\rho_{0}\) is defined to be zero on the kernel of \(\rho\) for \(\ker\rho_{0}\subseteq\ker\rho\). 
Some authors omit  the linear terms in this definition. 
For further details we refer to \cref{sec:prop_QKL}.

In this paper we will consider generalized Tikhonov regularization of the form
\begin{align}\label{al:general_prob}
    \rho_{\alpha} \in \underset{\rho\in\tilde{\BopI}}{\argmin}\,\calS_{\gobs}(T\rho)+\alpha \QKL(\rho,\rho_{0})
\end{align}
with regularization parameter \(\alpha>0\) and some noisy data $\gobs\approx T\rho^{\dagger}$. 
As a data fidelity functional \(\calS_{\gobs}\) we consider either the \(L^{2}\)-norm or the Kullback-Leibler divergence because the data are usually probability densities. 

In \cite{Olivares:07} the quantum relative entropy was already used in a more practical setting to incorporate prior information if the measurement data does not allow for a unique reconstruction. 
In a finite dimensional statistical setting such estimators were analyzed in \cite{koltchinskii:11}, 
and error estimates have been derived.  

We will show, under mild assumptions, that the minimization problem \eqref{al:general_op} has a unique 
solution $\rho_{\alpha}$, and that, for suitable choice of the regularization parameters,
the method is regularizing 
in the sense that $\rho_{\alpha}\to \rho^{\dagger}$ in the trace norm and with respect to $\QKL$ 
as the noise level vanishes.   

\medskip
After repeating the conditions for the regularizing property we summarize many important properties of the quantum relative entropy and show that it is weak-\(*\) lower semicompact. We also characterize the corresponding Bregman divergence, which is again the quantum relative entropy. After showing that \(T\) is weak-\(*\) to weak-\(*\) continuous, we are then able to conclude the regularizing property and show that the sequence of solutions for decreasing noise levels which appropriately chosen regularization parameters converge weak-\(*\) and even in the trace norm. In chapter \ref{sec:algorithms} we move to the finite dimensional setting and derive subgradient, conjugate functional and proximal operators for the quantum relative entropy. With them the applications of iterative solution algorithm such as FISTA and the primal-dual hybrid gradient algorithm are possible. This section is mostly self-contained and provides the final solution algorithms such that they can easily be re-implemented and used in practice. To demonstrate the validity of our theoretical results, we apply them in the setting of homodyne tomography and PINEM and add corresponding numerical examples. 

%% file: regularization.tex
The purpose of this section is to show the use of quantum relative entropy as a penalty functional 
in generalized Tikhonov regularization defines indeed a valid regularization method in the sense 
that the estimation error tends to $0$ with the noise level.  
We begin by recalling classical broadly applicable sufficient conditions for this property. 
In the case of data fidelity terms of norm-power type, the corresponding result dates back to \cite{Seidman:89}. 
More general data fidelity terms were investigated in the PhD thesis  \cite{Poschl:08} 
from which the following theorem is adapted. 

\begin{theorem}\label{the:regularization_general}
Let \(T:\dom T\subset\calX\rightarrow\calY\) be an operator between Banach spaces and let \(\tau_{\calX}\) and \(\tau_{\calY}\) be topologies on \(\calX\) and \(\calY\). Consider an exact solution 
$\rho^{\dagger}\in \dom T$ with corresponding exact data $\gdag:=T\rho^{\dagger}$,  
a sequence of observed data \((\gobs_{k})_{k\in\setN}\) with corresponding data fidelity functionals 
\(\calS_{\gobs_k}:\calY\rightarrow [0,\infty]\) and a penalty functional 
\(\calR : \calX \rightarrow \setR \cup \{\infty\}\) such that the following conditions are satisfied:
\begin{enumerate}[
    leftmargin=*,
    label={(C.\arabic*)},
    ref={(C.\arabic*)}]
    \item \((\calX,\tau_{\calX})\) and \((\calY,\tau_{\calY})\) are real locally convex vector spaces.\label{C1}
    \item  $\calR$ is sequentially lower semicontinuous.\label{C2}
    \item \(\calR\) is sequentially lower semicompact, i.e., $\{\rho\in \calX:\calR(\rho)\leq C\}$ 
    is compact for all $C\in\setR$. \label{C3}
    \item \(\dom T\cap \dom\calR\neq\emptyset\) and \(T\) is sequentially continuous on \(\dom T\cap \dom\calR\).\label{C4}
    \item \(\calS_{\gobs_k}\) is sequentially lower semicontinuous for all data \(\gobs_k\) , \(\calS_{\gobs_k}(g)=0\Leftrightarrow g=\gobs_k\), and  \(\lim_{k\rightarrow\infty}\calS_{\gobs_{k}}(\gdag)=0\). \label{C5}
    \item For the exact data \(\gdag\) there exists a unique \(\calR\)-minimizing solution \(\rho^{\dagger}\in\argmin_{T\rho=g^{\dagger}}\calR(\rho)\).\label{C6}
    \item \(\partial\calR(\rho^{\dagger})\neq\emptyset\).\label{C7}
\end{enumerate}   
Under assumptions \ref{C1}-\ref{C5}  there exists a solution
    \[\rho_{k}\in\argmin_{\rho}\calS_{\gobs_{k}}(T\rho)+\alpha_{k}\calR(\rho) \quad \text{for all} k\in\setN\]
    for any regularization parameter $\alpha_k>0$. If additionally \ref{C6} is satisfied, and 
    the regularization parameters are chosen such that 
    \[\lim_{k\rightarrow\infty}\alpha_{k}=0\text{ and }\lim_{k\rightarrow\infty}\frac{\calS_{\gobs_{k}}(\gdag)}{\alpha_{k}}=0,\]
    then the sequence \(\rho_{k}\) converges to \(\rho^{\dagger}\) in the \(\tau_{\calX}\) topology. Furthermore,
    \[\lim_{k\rightarrow\infty}\calS_{\gdag}(T\rho_{k})=0 \text{ and } \lim_{k\rightarrow\infty}\calR(\rho_{k})=\calR(\rho^{\dagger}).\]
    Finally, if also \ref{C7} holds true, then
    \[\lim_{k\rightarrow\infty}\Delta_{\calR}^{\sigma}(\rho_{k},\rho^{\dagger})=0\quad \text{for }\sigma\in\partial\calR(\rho^{\dagger})\]
    where \(\Delta_{\calR}^{\sigma}\) denotes the Bregman divergence.
\end{theorem}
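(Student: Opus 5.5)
The plan is the direct method of the calculus of variations for existence, followed by the standard stability/convergence argument from Tikhonov theory.

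\emph{Existence.} Fix $k$ and set $J_k(\rho):=\calS_{\gobs_k}(T\rho)+\alpha_k\calR(\rho)$ on $\dom T\cap\dom\calR$. By \ref{C4} this set is nonempty. We may assume there is an element $\tilde\rho$ with $J_k(\tilde\rho)<\infty$; otherwise every point is a minimizer. Hence $m:=\inf J_k<\infty$. Since $\calS_{\gobs_k}\ge 0$, every minimizing sequence $(\rho_n)$ satisfies $\calR(\rho_n)\le (J_k(\tilde\rho)+1)/\alpha_k$ for large $n$. By \ref{C3} this sublevel set is sequentially compact, so a subsequence converges in $\tau_\calX$ to some $\bar\rho$. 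One has to check that $\bar\rho\in\dom T$; I would read \ref{C3}--\ref{C4} as implying this. By \ref{C4}, $T\rho_n\to T\bar\rho$ in $\tau_\calY$. Lower semicontinuity of $\calS_{\gobs_k}$ (\ref{C5}) and of $\calR$ (\ref{C2}) then gives $J_k(\bar\rho)\le\liminf J_k(\rho_n)=m$.

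\emph{Convergence.} By minimality, comparing with $\rho^\dagger$ gives
\[\calS_{\gobs_k}(T\rho_k)+\alpha_k\calR(\rho_k)\le \calS_{\gobs_k}(\gdag)+\alpha_k\calR(\rho^\dagger).\]
Dividing by $\alpha_k$ and using the parameter choice yields $\limsup\calR(\rho_k)\le\calR(\rho^\dagger)$. It also yields $\calS_{\gobs_k}(T\rho_k)\to0$. Thus $(\rho_k)$ lies in a fixed sublevel set of $\calR$, and every subsequence has a further subsequence converging to some $\hat\rho$, with $T\rho_k\to T\hat\rho$.

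The main obstacle is to show $T\hat\rho=\gdag$. We only control $\calS_{\gobs_k}(T\rho_k)$ and $\calS_{\gobs_k}(\gdag)$, which carry a moving data argument, whereas \ref{C5} gives lower semicontinuity only in the first argument for fixed data. The key step is a joint lower semicontinuity: if $g_k\to g$ and $\calS_{\gobs_k}(\gdag)\to0$, then $\calS_{\gdag}(g)\le\liminf\calS_{\gobs_k}(g_k)$. This is the assumption used in \cite{Poschl:08}, typically derived from a generalized triangle inequality for $\calS$. I would make it explicit as part of \ref{C5} and verify it later for the $L^2$ and Kullback--Leibler fidelities.

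Granting this step, $\calS_{\gdag}(T\hat\rho)=0$, so $T\hat\rho=\gdag$. Then
\[\calR(\hat\rho)\le\liminf\calR(\rho_k)\le\calR(\rho^\dagger),\]
and uniqueness in \ref{C6} gives $\hat\rho=\rho^\dagger$. Since the limit does not depend on the subsequence, the whole sequence converges to $\rho^\dagger$ and $\calR(\rho_k)\to\calR(\rho^\dagger)$. The same joint semicontinuity, applied along the sequence, gives $\calS_{\gdag}(T\rho_k)\to0$.

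\emph{Bregman divergence.} Take $\sigma\in\partial\calR(\rho^\dagger)$, which exists by \ref{C7}. Then
\[\Delta^\sigma_\calR(\rho_k,\rho^\dagger)=\calR(\rho_k)-\calR(\rho^\dagger)-\langle\sigma,\rho_k-\rho^\dagger\rangle.\]
The first difference tends to $0$. The pairing tends to $0$ because $\sigma$ is a continuous linear functional for $\tau_\calX$, as for example the weak-$*$ topology paired with the predual. This needs to be checked, or assumed through the locally convex structure in \ref{C1} with $\sigma$ taken in the dual of $(\calX,\tau_\calX)$.
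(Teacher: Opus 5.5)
The paper does not prove this theorem; it adapts it from \cite{Poschl:08}. Your route is the standard one from that source: the direct method for existence, comparison with $\rho^\dagger$ by minimality, a subsequence-of-subsequence argument closed by the uniqueness in \ref{C6}, and Bregman convergence with $\sigma$ in the dual of $(\calX,\tau_\calX)$. That last reading of $\partial\calR$ is also the one the paper's remark on \ref{C7} adopts.

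You are right that \ref{C5}, read literally as lower semicontinuity of each $\calS_{\gobs_k}$ for fixed data, cannot identify the limit. In fact the statement then fails. Take $\calX=\calY=\setR$, $T=\mathrm{id}$, $\calR(\rho)=(\rho-1)^2$, $\gdag=0$, $\gobs_k=1/k$, $\calS_{1/k}(g)=\min\{|g-1/k|,\,1/k+|g-1|\}$ (with $\calS_y(g)=|g-y|$ for other $y$) and $\alpha_k=k^{-1/2}$. All of \ref{C1}--\ref{C7} and the parameter choice hold, yet $\rho_k=1$ for $k\ge5$. The failure is exactly the missing joint lower semicontinuity at $(0,1)$.

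Similarly, $\bar\rho\in\dom T$ does not follow from \ref{C3}--\ref{C4}. You need $\dom T$ to be sequentially closed, or $\dom\calR\subseteq\dom T$; the latter holds in the paper's application.

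The step that genuinely fails is your last one: joint lower semicontinuity cannot give $\calS_{\gdag}(T\rho_k)\to0$. Lower semicontinuity only bounds the value at a limit point by a $\liminf$. From $T\rho_k\to\gdag$ in $\tau_\calY$ it yields only the trivial $0\le\liminf_k\calS_{\gdag}(T\rho_k)$, whereas you need an upper bound on $\calS_{\gdag}(T\rho_k)$ for each $k$.

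Here is a counterexample. Let $\calS_y(g)=\min\{1,|g-y|/\sqrt{|y|}\}$ for $y\neq0$, and $\calS_0(g)=1$ for $g\neq0$, $\calS_0(0)=0$. This family is jointly sequentially lower semicontinuous on $\setR\times\setR$, and $\calS_{y_k}(0)\to0$ forces $y_k\to0$, so your key assumption holds. With $T=\mathrm{id}$, $\calR(\rho)=(\rho-1)^2$, $\gdag=0$, $\gobs_k=1/k$ and $\alpha_k=k^{-1/4}$, the minimizer is $\rho_k=1/k$. Hence $\rho_k\to\rho^\dagger$ and $\calR(\rho_k)\to\calR(\rho^\dagger)$, but $\calS_{\gdag}(T\rho_k)=1$ for all $k$.

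What you need is an upper-bound hypothesis such as the generalized triangle inequality $\calS_{\gdag}(g)\le C\bigl(\calS_{\gobs_k}(g)+\calS_{\gobs_k}(\gdag)\bigr)$, which holds with $C=2$ for $\tfrac12\|\cdot-\gobs\|_2^2$. It gives $\calS_{\gdag}(T\rho_k)\to0$ directly from the two quantities you already control. Combined with lower semicontinuity of $\calS_{\gdag}$ and $\calS_{\gdag}(g)=0\Leftrightarrow g=\gdag$, it also yields $T\hat\rho=\gdag$, so it can replace your joint semicontinuity altogether. Note that you use this zero property, but \ref{C5} states it only for the $\gobs_k$.

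For the Kullback--Leibler fidelity no inequality of this form holds. Already in one dimension, $\gobs=0$, $\gdag=1$ and $g\to0$ make the left side blow up while the right side stays bounded. So that part of the conclusion needs a separate argument, or an extra assumption, in that case.
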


\subsection{Properties of the quantum relative entropy}\label{sec:prop_QKL}
Due to the different definitions and settings used for the quantum relative entropy in the mathematical physics literature, we first summarize the main properties we need. Our setting includes linear terms and does not assume the operators to act on finite dimensional spaces or be of trace $1$ as it is often done in the physics literature. Most results still transfer to this setting, but some care has to be taken.

Let us first introduce our notation.  As in the introduction, \(\calH\) denotes a complex, separable Hilbert space. Let \(\{f_{l}\}\) denote an arbitrary orthonormal basis of $\calH$. Following \cite[Chapter~VI]{Reed:81} we define the spaces of operators:
\begin{align*}
    \Bop&:=\calL(\calH)&&\text{(bounded operators)}\\
    \Kop&:=\calK(\calH)&&\text{(compact operators)}\\
    \BopII&:\left\{\rho\in\Kop:\sum\nolimits_{l}\|\rho f_{l}\|_{\calH}^{2}<\infty\right\}&&\text{(Hilbert-Schmidt operators)}\\
    \BopI&:=\{\rho\in\BopII:\tr(|\rho|)<\infty\}&&\text{(trace-class operators)}.
\end{align*}
Here $\BopII$ can be shown to be independent of the choice of the basis. 
We use \(\tilde{\Bop},\tilde{\Kop},\tilde{\BopI}\) and \(\tilde{\BopII}\) to denote the corresponding subspaces of hermitian (equivalently, self-adjoint) operators. By the spectral theorem, any $\rho\in \tilde{\Kop}$ 
has an orthonormal basis $\{e_l\}$ of eigenvectors with corresponding eigenvalues $\rho_l$. 
For a real-valued function $\varphi$ on the spectrum of $\rho$, the functional calculus 
$\varphi(\rho)$ is defined by $\varphi(\rho)f:=\sum_l \varphi(\rho_l)(f,e_l)e_l$ 
for all $f$ for which the series converges, and $\varphi(\rho)\in \tilde{\Bop}$ if $f$ is bounded \cite[Theorem~VII.2]{Reed:81}.
This is used both in the definition \eqref{eq:defi_QKL} of the quantum relative entropy 
and in the definition of $\BopI$, where $|\rho|:=\sqrt{\rho^*\rho}$ and 
$\tr(|\rho|):=\sum_l (f_l,|\rho|f_l)$ is again independent of the choice of $\{f_l\}$.

We will choose $\tau_X$ as weak-\(*\)-topology on $\tilde{\Bop}$ in Theorem \ref{the:regularization_general}. 
To properly define this, recall that $\BopI$ can be identified with the dual space of $\Kop$, 
$\BopI\cong \Kop^*$ via the bilinear dual pairing $\langle \rho,\sigma\rangle :=\tr(\rho\sigma)$ 
for $\rho\in \BopI$ and $\sigma\in \Kop^*$,  see, e.g., \cite{Reed:81}.
In other words, $\BopI$ has a predual
\begin{align*}
    \BopI\predual:=\Kop, 
\end{align*}
and the weak topology on $\BopI$ induced by $\Kop$ will be called weak-$*$-topology. 

In Theorem \ref{the:regularization_general} we need real vector spaces, so we cannot use \(\BopI\) directly. However, we can treat \(\tilde{\BopI}\) as a real vector space and characterize the corresponding weak-\(*\)-topology.
\begin{lemma}\label{lem:spaces}
    Treating \(\tilde{\BopI}\) and \(\tilde{\Kop}\) as real vector spaces we have \(\tilde{\BopI}=\tilde{\Kop}^*\), and the weak topology on \(\tilde{\BopI}\) with respect to \(\tilde{\Kop}\) (called 
    weak-$*$-topology on  \(\tilde{\BopI}\) in the following)
    coincides with the relative topolog on $\tilde{\BopI}\subset \BopI$  induced by the 
    weak-\(*\)-topology on \(\BopI\).
\end{lemma}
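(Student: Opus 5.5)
The plan is to build the identification explicitly from the complex duality $\BopI\cong\Kop^*$ given by $\langle\rho,\sigma\rangle=\tr(\rho\sigma)$, and to use the Hermitian/anti-Hermitian decomposition of operators.

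\emph{Step 1: real-valuedness of the pairing.} For $\rho\in\tilde{\BopI}$ and $\sigma\in\tilde{\Kop}$ the number $\tr(\rho\sigma)$ is real, since $\overline{\tr(\rho\sigma)}=\tr((\rho\sigma)^*)=\tr(\sigma\rho)=\tr(\rho\sigma)$. Hence $\rho\mapsto\tr(\rho\,\cdot)$ restricted to $\tilde{\Kop}$ defines a real-linear map $J:\tilde{\BopI}\to\tilde{\Kop}^*$. Injectivity is immediate: if $\tr(\rho\sigma)=0$ for all $\sigma\in\tilde{\Kop}$, then in particular for all rank-one projections $\sigma=(\cdot,f)f$, so $(f,\rho f)=0$ for all $f$, which gives $\rho=0$ by polarization. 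Continuity and isometry, if wanted, follow from $|\tr(\rho\sigma)|\le\|\rho\|_{1}\|\sigma\|$; testing with $\sigma=\operatorname{sign}(\rho)$ truncated to finite spectral projections gives the reverse inequality.

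\emph{Step 2: surjectivity, the main step.} Let $\ell\in\tilde{\Kop}^*$ be real-linear and bounded. Every $K\in\Kop$ decomposes uniquely as $K=A+\iu B$ with $A=(K+K^*)/2$ and $B=(K-K^*)/(2\iu)$ in $\tilde{\Kop}$, and $\|A\|,\|B\|\le\|K\|$. Define the complex-linear extension $L(K):=\ell(A)+\iu\,\ell(B)$. I will check complex linearity: multiplying $K$ by $\iu$ sends $(A,B)$ to $(-B,A)$, so $L(\iu K)=\iu L(K)$. Boundedness follows from $|L(K)|\le 2\|\ell\|\,\|K\|$. Then $L\in\Kop^*$, so $L=\tr(\rho\,\cdot)$ for some $\rho\in\BopI$. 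It remains to show that $\rho$ is Hermitian. For $\sigma\in\tilde{\Kop}$ we have $\tr(\rho\sigma)=\ell(\sigma)\in\setR$. Writing $\rho=\rho_1+\iu\rho_2$ with $\rho_1,\rho_2\in\tilde{\BopI}$, Step 1 shows that $\tr(\rho_1\sigma)$ and $\tr(\rho_2\sigma)$ are both real. Hence $\tr(\rho_2\sigma)=0$ for all Hermitian compact $\sigma$, and therefore $\rho_2=0$. So $J$ is onto, and $\tilde{\BopI}=\tilde{\Kop}^*$.

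\emph{Step 3: the topologies.} The weak-$*$ topology on $\tilde{\BopI}$ is generated by the seminorms $\rho\mapsto|\tr(\rho\sigma)|$ with $\sigma\in\tilde{\Kop}$. The relative weak-$*$ topology from $\BopI$ is generated by $\rho\mapsto|\tr(\rho K)|$ with $K\in\Kop$. The first family is contained in the second, so the relative topology is at least as fine. Conversely, for $\rho$ Hermitian and $K=A+\iu B$ we have $|\tr(\rho K)|\le|\tr(\rho A)|+|\tr(\rho B)|$, which is controlled by seminorms from the first family. Since both topologies are generated by seminorms evaluated at differences $\rho-\rho'$ of Hermitian operators, the two topologies coincide.

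I expect Step 2 to be the only nontrivial point, specifically checking that the extension $L$ is complex-linear and that the representing operator is Hermitian; Steps 1 and 3 are routine.
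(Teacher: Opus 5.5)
Your proof is correct and follows essentially the same route as the paper: the same complex-linear extension of $\ell$ via the Hermitian/anti-Hermitian decomposition, representation through $\BopI\cong\Kop^*$, testing the representer against Hermitian (rank-one) operators to show it is self-adjoint, and the same decomposition of compact test operators to compare the two topologies. The differences are cosmetic. You phrase Step~3 with seminorms instead of nets, and you make injectivity explicit, which the paper leaves implicit.
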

\begin{proof}
    One can check with a few simple calculations that for every \(\rho\in\tilde{\BopI}\) the functional \(f_{\rho}:\tilde{\Kop}\rightarrow\setR,\sigma\mapsto\tr(\sigma\rho)\) is bounded, \(\setR\)-linear and well-defined. For the other direction, assume that \(f\in\tilde{\Kop}^{*}\), then \(\tilde{f}:\Kop\rightarrow\setC\) defined by
    \[\tilde{f}(\sigma)=f\left(\frac{1}{2}(\sigma+\sigma^{*})\right)-if\left(\frac{i}{2}(\sigma-\sigma^{*})\right)\]
    is well-defined, \(\setC\)-linear and bounded. Then \(\tilde{f}\in\Kop^{*}\) and  there exists \(\rho\in\BopI\) such that \(\tilde{f}(\sigma)=\tr(\sigma\rho)\). Then for all  \(\sigma\in\tilde{\Kop}\) we have
    \[\tr(\sigma\rho)=\tilde{f}(\sigma)=f\left(\frac{1}{2}(\sigma+\sigma^{*})\right)-if\left(\frac{i}{2}(\sigma-\sigma^{*})\right)=f(\sigma)\in\setR.\]
    Now for every unit vector \(v\in\calH\) there exists a projection \(P_{v}\in\tilde{\Kop}\) onto the subspace spanned by \(v\). Then
    \begin{align*}
        0&=2\Im(\tr(P_{v}\rho))=\tr(P_{v}\rho)-\overline{\tr(P_{v}\rho)}=\tr(P_{v}(\rho-\rho^{*}))=\langle v,(\rho-\rho^{*})v\rangle
    \end{align*}
    which implies \(\rho-\rho^{*}=0\) and therefore \(\rho\in\tilde{\BopI}\).

    To see that the topologies coincide, we use a net \((\rho_{n})_{n\in\calN}\subset\tilde{\BopI}\). If it converges with respect to the weak-\(*\)-topology of \(\Kop\), it also converges with respect to the weak-\(*\)-topology of \(\tilde{\Kop}\). For the opposite direction, assume that
\[\lim_{n\in\calN}\tr(\sigma\rho_{n})=\tr(\sigma\rho)\quad\text{for all }\sigma\in\tilde{\Kop}.\]
Then for \(\sigma\in\Kop\)
\begin{align*}
\lim_{n\in\calN}\tr(\sigma\rho_{n})&=\lim_{n\in\calN}\tr\left(\frac{1}{2}(\sigma+\sigma^{*})\rho_{n}\right)+\tr\left(\frac{1}{2}(\sigma-\sigma^{*})\rho_{n}\right)\\
&=\tr\left(\frac{1}{2}(\sigma+\sigma^{*})\rho\right)+\lim_{n\in\calN}(-i)\tr\left(\frac{i}{2}(\sigma-\sigma^{*})\rho_{n}\right)\\
&=\tr\left(\frac{1}{2}(\sigma+\sigma^{*})\rho\right)-i\tr\left(\frac{i}{2}(\sigma-\sigma^{*})\rho\right)=\tr(\sigma\rho).
\end{align*}
So it also converges with respect to the weak-\(*\)-topology coming from \(\Kop\).
\end{proof}
This lemma allows us to work with the usual definition of the weak-\(*\)-topology on \(\tilde{\BopI}\).  Additionally, as the dual space of a Banach space \(\tilde{\BopI}\) is locally convex with respect to the weak-\(*\)-topology,  so \ref{C1} is satisfied.
Regarding continuity, we have the following lemma that was shown recently in \cite{Lami:23}.
\begin{lemma}[\ref{C2},\cite{Lami:23}]\label{lem:QKL_semicont}
The quantum relative entropy \(\QKL\) is weak-\(*\) lower semicontinuous in both arguments.
\end{lemma}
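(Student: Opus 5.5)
We plan to derive the weak-$*$ lower semicontinuity of $\QKL$ from a variational (dual) representation as a supremum of weak-$*$ continuous functionals. A pointwise supremum of continuous functions is lower semicontinuous, and this argument works for nets as well as sequences. The representation we have in mind is a Donsker--Varadhan/Petz-type formula adapted to our unnormalized definition with linear terms. For $\rho,\rho_0\in\BopIpsd$ we aim at
\begin{align*}
\QKL(\rho,\rho_0)=\sup\Bigl\{\tr(\rho h)-\tr\bigl(\exp(h+\ln\rho_0)\bigr)+\tr(\rho_0)\Bigr\},
\end{align*}
where the supremum runs over suitable hermitian test operators $h$. A more robust variant expresses $\QKL$ as a supremum over finite-rank compressions: with finite-rank orthogonal projections $P$ one sets $\rho_P:=P\rho P$, and the corresponding finite-dimensional relative entropies $\QKL(P\rho P,P\rho_0P)$ increase to $\QKL(\rho,\rho_0)$ as $P\uparrow I$.

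\textbf{Step 1 (finite dimensions).} We first reduce to finite dimensions via the compression route. For a fixed finite-rank projection $P$, the map $\rho\mapsto P\rho P$ is weak-$*$ to norm continuous. Indeed, its matrix entries $\langle f_i,\rho f_j\rangle=\tr(\rho\,|f_j\rangle\langle f_i|)$ are pairings with compact (rank-one) operators, and in finite dimensions all topologies coincide. On the finite-dimensional cone, $\QKL$ is jointly lower semicontinuous in the usual topology. This follows from the identity $\tr(\rho\ln\rho-\rho\ln\rho_0)=\sup_{t}\dots$, or directly from joint convexity together with continuity on the interior and the convention $0\ln0=0$ and $+\infty$ when the kernel condition fails. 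Hence each map $(\rho,\rho_0)\mapsto\QKL(P\rho P,P\rho_0P)$ is jointly weak-$*$ lower semicontinuous.

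\textbf{Step 2 (monotone approximation).} This is the main obstacle. We need to show that $\QKL(\rho,\rho_0)=\sup_P \QKL(P\rho P,P\rho_0P)$, with the supremum taken over finite-rank projections, or at least over an increasing sequence $P_n\uparrow I$. The inequality ``$\ge$'' is the data-processing inequality (monotonicity under the completely positive map $X\mapsto PXP\oplus (I-P)X(I-P)$, followed by discarding a nonnegative block). For the unnormalized version with linear terms this holds because the trace terms are preserved and each block contributes a nonnegative amount. The reverse inequality ``$\le$'' is the delicate infinite-dimensional limit. We would invoke Lindblad's result \cite{Lindblad:74}, or the martingale-type convergence of relative entropy under increasing subalgebras. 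Choosing $P_n$ as spectral projections of $\rho_0$ (if $\rho_0$ is fixed) or of $\rho+\rho_0$ makes the computation explicit, since then $\ln(P_n\rho_0P_n)=P_n\ln\rho_0P_n$ on the range. Combined with lower semicontinuity in finite dimensions, this gives convergence of the traces by monotone convergence of the eigenvalue sums. The case $\ker\rho_0\not\subseteq\ker\rho$ must be checked to yield $+\infty$ in the limit.

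\textbf{Step 3 (conclusion).} The supremum of the jointly weak-$*$ lower semicontinuous functions from Step 1 is weak-$*$ lower semicontinuous, and we extend by $+\infty$ outside $\BopIpsd$. For the extension we use that $\BopIpsd$ is weak-$*$ closed in $\tilde{\BopI}$, since it is defined by the conditions $\tr(\rho\,|v\rangle\langle v|)\ge0$ for all $v$, which are weak-$*$ continuous. This closedness ensures that limits of points outside $\BopIpsd$ cannot spoil semicontinuity. By \cref{lem:spaces} the topology involved is exactly the one used in \cref{the:regularization_general}, so this establishes \ref{C2}.
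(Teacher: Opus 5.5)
The paper gives no proof of this lemma; it imports it from Lami--Shirokov \cite{Lami:23}. Your compression argument is a genuinely independent route, and its structure is sound:
\begin{itemize}
\item For each finite-rank $P$, the map $(\rho,\rho_0)\mapsto\QKL(P\rho P,P\rho_0P)$ is weak-$*$ lower semicontinuous, being finite-dimensional lower semicontinuity composed with a weak-$*$-to-norm continuous compression.
\item The bound $\QKL(P\rho P,P\rho_0P)\le\QKL(\rho,\rho_0)$ follows from the pinching case of \Cref{lem:QKL_basics} plus nonnegativity of the discarded block.
\item A pointwise supremum of lower semicontinuous functions is lower semicontinuous.
\end{itemize}
Nonnegativity of the discarded block is exactly where the linear terms in \eqref{eq:defi_QKL} are indispensable. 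For $\tr(\rho\ln\rho-\rho\ln\rho_0)$ alone, a block on which $\rho$ carries less mass than $\rho_0$ can contribute negatively, and that functional is indeed not jointly weak-$*$ lower semicontinuous once mass escapes to infinity. Compared with the bare citation, your route makes the ingredients visible: monotonicity, nonnegativity and an approximation property. It also yields more than the paper needs, namely joint lower semicontinuity along nets rather than only sequences. Taking the supremum over all finite-rank $P$, as you do, is what makes pair-dependent approximating sequences legitimate.

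Two steps need tightening.

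\emph{Step 1.} Joint convexity plus continuity on the interior does not give lower semicontinuity at boundary points, since a convex function may jump up on the boundary. A clean argument is
$\QKL(\rho,\rho_0)=\sup_{\varepsilon>0}\tr\bigl(\rho_0-\rho+\rho\ln\rho-\rho\ln(\rho_0+\varepsilon I)\bigr)$
on positive semidefinite matrices. By operator monotonicity of $\ln$ this is a monotone supremum of continuous functions, and it also yields $+\infty$ when $\ker\rho_0\not\subseteq\ker\rho$.

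\emph{Step 2.} The reverse inequality is the real content, and your explicit computation with spectral projections of $\rho_0$ does not work term by term. $\tr(\rho\ln\rho)$ and $\tr(\rho\ln\rho_0)$ can both be infinite while $\QKL(\rho,\rho_0)$ is finite; take $\rho=\rho_0$ with infinite von Neumann entropy. Monotone convergence of the separate eigenvalue sums therefore decides nothing. The martingale route is not a direct substitute either: it concerns restrictions to the subalgebras $\calB(P_n\calH)\oplus\setC(I-P_n)$, which differ from compressions by a scalar tail term.

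The simplest fix is the classical trace-norm lower semicontinuity of $\QKL$. Since $P_n\rho P_n\to\rho$ and $P_n\rho_0P_n\to\rho_0$ in trace norm, it gives $\liminf_n\QKL(P_n\rho P_n,P_n\rho_0P_n)\ge\QKL(\rho,\rho_0)$. Together with your first inequality, this yields $\QKL(P_n\rho P_n,P_n\rho_0P_n)\to\QKL(\rho,\rho_0)$ for any fixed sequence $P_n\uparrow I$. This is not circular, because norm lower semicontinuity is a weaker property than the weak-$*$ one you are proving.
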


Let us collect some further results from \cite{Lindblad:74}:
\begin{lemma}\label{lem:QKL_basics}
The quantum relative entropy \(\QKL\) defined on trace class operators on a separable Hilbert space has the the following properties.
\begin{enumerate}
    \item \(\QKL(\rho,\sigma)\geq0\) and \(\QKL(\rho,\sigma)=0\Leftrightarrow\rho=\sigma\)
    \item \(\QKL\) is jointly convex.
    \item \(\QKL\) is monotonous, meaning
    \(\QKL(\Phi(\rho),\Phi(\sigma))\leq\QKL(\rho,\sigma)\) for \(\Phi\) a trace preserving expectation (see \cite[Thm. 1]{Lindblad:74} for further details). Especially for a projection \(P\) and \(\Phi(\rho)=P\rho P+(I-P)\rho(I-P)\)
    \[\QKL(P\rho P+(I-P)\rho(I-P),P\sigma P+(I-P)\sigma(I-P))\leq\QKL(\rho,\sigma)\]
\end{enumerate}
\end{lemma}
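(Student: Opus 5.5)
Lemma \ref{lem:QKL_basics} collects three properties of $\QKL$ from \cite{Lindblad:74}. Lindblad's setting omits the linear terms and allows arbitrary positive trace-class operators on a separable Hilbert space. So the plan is to transfer his results to our normalization rather than reprove them.

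\textbf{Splitting off the linear part.} For $\rho,\sigma\in\BopIpsd$ with $\ker\sigma\subseteq\ker\rho$ we write
\[
\QKL(\rho,\sigma)=S(\rho,\sigma)+\tr(\sigma)-\tr(\rho), \qquad S(\rho,\sigma):=\tr(\rho\ln\rho-\rho\ln\sigma).
\]
Here $S(\rho,\sigma)$ is Lindblad's relative entropy, defined via spectral decompositions as $\sum_{i,j}|\langle e_i,f_j\rangle|^2(\rho_i\ln\rho_i-\rho_i\ln\sigma_j)$. The added linear part $\tr(\sigma)-\tr(\rho)$ is finite, since both operators are trace class.

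\textbf{Nonnegativity and definiteness (item 1).} I would use Klein's inequality in its series form. For each pair $(i,j)$ the scalar term $\rho_i\ln\rho_i-\rho_i\ln\sigma_j-\rho_i+\sigma_j$ is nonnegative, and it vanishes iff $\rho_i=\sigma_j$. Multiplying by the doubly stochastic weights $|\langle e_i,f_j\rangle|^2$ and summing reproduces $\QKL(\rho,\sigma)$, because $\sum_{i,j}|\langle e_i,f_j\rangle|^2\rho_i=\tr\rho$ and similarly for $\sigma$. This gives $\QKL\ge0$.

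If $\QKL(\rho,\sigma)=0$, then $\rho_i=\sigma_j$ whenever $\langle e_i,f_j\rangle\neq0$. Hence $\rho f_j=\sigma_j f_j$ for every $j$, so $\rho=\sigma$. The case $\ker\sigma\not\subseteq\ker\rho$ is trivial, since then $\QKL=\infty\neq0$.

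\textbf{Joint convexity (item 2).} On the finite part this follows from Lindblad's joint convexity of $S$ (Lieb's concavity theorem, extended by finite-rank approximation), because adding the affine term $\tr\sigma-\tr\rho$ preserves joint convexity. We must also check that the domain is convex. The kernel condition is stable under convex combinations: for $\lambda\in(0,1)$, positivity gives $\ker(\lambda\sigma_1+(1-\lambda)\sigma_2)=\ker\sigma_1\cap\ker\sigma_2$, which is contained in $\ker\rho_1\cap\ker\rho_2=\ker(\lambda\rho_1+(1-\lambda)\rho_2)$. When either value is $\infty$ the inequality is trivial.

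\textbf{Monotonicity (item 3).} A trace preserving expectation $\Phi$ leaves $\tr\rho$ and $\tr\sigma$ unchanged, so the linear terms are invariant and monotonicity of $\QKL$ reduces to \cite[Thm.~1]{Lindblad:74}. For the pinching $\Phi(\rho)=P\rho P+(I-P)\rho(I-P)$ we only need to check that $\Phi$ is a conditional expectation onto the block-diagonal algebra and that it is trace preserving, since $\tr(P\rho P)+\tr((I-P)\rho(I-P))=\tr\rho$. If $\QKL(\rho,\sigma)=\infty$ nothing needs proving. Otherwise $\ker\sigma\subseteq\ker\rho$ gives $\ran\rho\subseteq\ran\sigma$ (closures), which is compatible with the pinching, so Lindblad's statement applies.

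\textbf{Main obstacle.} The delicate point is the infinite-dimensional bookkeeping: the decomposition into $S$ plus linear part must be legitimate, with no $\infty-\infty$. I would handle this with the termwise nonnegative double-series representation above, which makes every rearrangement a sum of nonnegative terms.
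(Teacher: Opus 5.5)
Your proposal is correct and follows essentially the paper's route: the paper gives no separate proof and simply quotes all three properties from \cite{Lindblad:74}, which is what you do for items 2 and 3, and your termwise nonnegative double-series (Klein) argument for item 1 is the standard reasoning behind Lindblad's definition. One small correction: the paper takes \eqref{eq:defi_QKL}, linear terms $\tr(\sigma-\rho)$ included, from \cite{Lindblad:74}, so your opening step of splitting off the linear part is harmless but unnecessary.
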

The quantum relative entropy also satisfies an inequality which relates it to the trace norm in the same way as shown for example in \cite{Eggermont93} for the Kullback-Leibler divergence and the \(\ell^{1}\)-norm.
\begin{lemma}\label{lem:ineq_QKL}
    For \(\rho,\sigma\in\BopIpsd\)
    \[\|\rho-\sigma\|_{\BopI}^{2}\leq\left(\frac{2}{3}\|\rho\|_{\BopI}+\frac{4}{3}\|\sigma\|_{\BopI}\right)\QKL(\rho,\sigma).\]
\end{lemma}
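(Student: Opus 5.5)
The plan is to reduce the operator inequality to the classical scalar inequality of \cite{Eggermont93} for the Kullback--Leibler divergence on two-point distributions. The reduction uses the monotonicity in \cref{lem:QKL_basics} with a suitably chosen projection.

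First we may assume $\QKL(\rho,\sigma)<\infty$, since otherwise there is nothing to prove. The operator $\rho-\sigma$ is self-adjoint and trace class. Let $P$ be the spectral projection of $\rho-\sigma$ onto its nonnegative spectral subspace, and $I-P$ the projection onto the negative part. Then
\[\|\rho-\sigma\|_{\BopI}=\tr(P(\rho-\sigma))-\tr((I-P)(\rho-\sigma)).\]

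Next we apply the map $\Phi(X)=PXP+(I-P)X(I-P)$ from item 3 of \cref{lem:QKL_basics}, followed by the trace-preserving map $X\mapsto \tr(PX)\,|1\rangle\langle1|+\tr((I-P)X)\,|2\rangle\langle2|$ into diagonal $2\times2$ matrices. This second map is a composition of a conditional expectation and a partial trace on the two blocks. If it is not covered directly by the version of monotonicity stated in \cref{lem:QKL_basics}, I would instead argue blockwise: $\QKL$ is additive over the orthogonal blocks $P\calH$ and $(I-P)\calH$, and on each block the trace-functional inequality $\QKL(A,B)\ge \tr A\ln(\tr A/\tr B)-\tr A+\tr B$ holds. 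This is the standard ``scalar'' lower bound, which follows from joint convexity and homogeneity, or from the log-sum/Peierls-type argument.

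Setting $a_1=\tr(P\rho)$, $a_2=\tr((I-P)\rho)$, $b_1=\tr(P\sigma)$, $b_2=\tr((I-P)\sigma)$, we obtain
\[\QKL(\rho,\sigma)\ge \KL(a,b)=\sum_{i=1,2}\Big(a_i\ln\frac{a_i}{b_i}-a_i+b_i\Big),\]
while
\[\|\rho-\sigma\|_{\BopI}=|a_1-b_1|+|a_2-b_2|=\|a-b\|_{\ell^1},\]
with $\|a\|_1=\|\rho\|_{\BopI}$ and $\|b\|_1=\|\sigma\|_{\BopI}$.

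It then remains to prove the commutative inequality
\[\|a-b\|_1^2\le\Big(\tfrac23\|a\|_1+\tfrac43\|b\|_1\Big)\KL(a,b)\]
for nonnegative vectors. The approach is as follows. Start from the elementary scalar bound
\[(x-y)^2\le\Big(\tfrac23x+\tfrac43y\Big)\Big(x\ln\frac xy-x+y\Big),\qquad x,y\ge0.\]
This follows by setting $t=x/y$ and checking that $g(t)=(\frac23t+\frac43)(t\ln t-t+1)-(t-1)^2\ge0$: we have $g(1)=g'(1)=0$, and a sign analysis of the derivatives gives convexity. Then apply Cauchy--Schwarz:
\[\sum_i|a_i-b_i|\le \Big(\sum_i(\tfrac23a_i+\tfrac43b_i)\Big)^{1/2}\Big(\sum_i \KL(a_i,b_i)\Big)^{1/2}.\]

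The main obstacle is the reduction step: justifying rigorously in infinite dimensions that $\QKL$ dominates the KL divergence of the traces on the two blocks. This is the case in particular when $\ker\sigma$ is nontrivial or when the blocks are infinite dimensional. I would handle it by first using the pinching inequality from \cref{lem:QKL_basics}, and then, on each block, using the operator inequality $\tr A\ln A-\tr A\ln B\ge \tr A\ln(\tr A/\tr B)$. The latter is proven by normalizing to trace-one operators and using nonnegativity of $\QKL$ (item 1) for $A/\tr A$ and $B/\tr B$. Degenerate cases where $\tr A=0$ or $\tr B=0$ are treated separately.
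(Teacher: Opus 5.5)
Your proposal is correct and follows essentially the same route as the paper: Hiai's spectral projection of $\rho-\sigma$ (the paper uses $1_{[0,\infty)}(\sigma-\rho)$, which is equivalent), monotonicity under the pinching of \cref{lem:QKL_basics} to reduce to a two-point Kullback--Leibler divergence, the scalar inequality $|x-y|^2\le(\tfrac23x+\tfrac43y)(y-x+x\ln\tfrac{x}{y})$, and Cauchy--Schwarz. You also fill in two steps the paper leaves to citations. You verify the scalar bound via $g''(t)=\tfrac43(\ln t+1/t-1)\ge 0$ (the paper cites Borwein--Lewis for it), and you justify the blockwise trace reduction by normalizing and using nonnegativity of $\QKL$ (the paper cites Hiai). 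Both of these additions are sound.
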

\begin{proof}
    A proof for operators with trace $1$ can be found in \cite{Hiai:81} where it was shown that
    \[\QKL(\rho,\sigma)\geq\frac{1}{2}\|\rho-\sigma\|_{\BopI}^{2}\]
    for \(\tr\rho=\tr\sigma=1\). We will follow this proof and modify the relevant parts to get the more general form. For convenience we define the corresponding functionals \(f_{\rho}:\tilde{\Bop}\rightarrow\setR,a\mapsto\tr(a\rho)\) and \(f_{\sigma}:\tilde{\Bop}\rightarrow\setR,a\mapsto\tr(a\sigma)\). 
    As in \cite{Hiai:81} we can find an orthogonal projection \(P=1_{[0,\infty)}(\sigma-\rho)\) on $\cal{H}$  such that
    \[\|\rho-\sigma\|_{\BopI}=(f_{\sigma}-f_{\rho})(P)-(f_{\sigma}-f_{\rho})(I-P)\leq|(f_{\sigma}-f_{\rho})(P)|+|(f_{\sigma}-f_{\rho})(I-P)|.\]
    It was also shown there that by monotonicity
    \begin{align*}
        \QKL(\rho,\sigma)\geq\left[f_{\sigma}(P)-f_{\rho}(P)+f_{\rho}(P)\ln\frac{f_{\rho}(P)}{f_{\sigma}(P)}\right]+\left[f_{\sigma}(I-P)-f_{\rho}(I-P)+f_{\rho}(I-P)\ln\frac{f_{\rho}(I-P)}{f_{\sigma}(I-P)}\right].
    \end{align*}
    From \cite{Borwein91} we have the inequality
    \[|x-y|\leq\sqrt{\left(\frac{2}{3}x+\frac{4}{3}y\right)\left(y-x+x\ln{\frac{x}{y}}\right)}\quad 
    \text{for all } x\geq0,y>0.\]
    Together with the Cauchy-Schwarz inequality this implies
    \begin{align*}
        (|x-y|+|a-b|)^{2}\leq\left(\frac{2}{3}(x+a)+\frac{4}{3}(y+b)\right)\left(y-x+x\ln{\frac{x}{y}}+b-a+a\ln{\frac{a}{b}}\right)\quad\text{for all } x,a\geq0,y,b>0.
    \end{align*}
    Finally setting \(x=f_{\rho}(P),y=f_{\sigma}(P),a=f_{\rho}(I-P)\) and \(b=f_{\sigma}(I-P)\) gives us the desired inequality.
\end{proof}
We can also bound the trace by \(\QKL\) as follows:
\begin{lemma}\label{lem:QKL_trace_bound}
With Euler's number $\euler$, for all \(\rho,\rho_{0}\in\tilde{\BopI}\) the following inequality holds true:
    \[\QKL(\rho,\rho_{0})\geq\tr(\rho)+(1-\euler)\tr(\rho_{0}).\]
\end{lemma}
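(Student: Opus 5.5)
We need to show $\QKL(\rho,\rho_0)\ge \tr\rho+(1-\euler)\tr\rho_0$. If $\QKL(\rho,\rho_0)=\infty$ there is nothing to prove, so we assume $\rho,\rho_0\in\BopIpsd$ with $\ker\rho_0\subseteq\ker\rho$. The plan is to reduce to the scalar Kullback--Leibler integrand via the monotonicity in \cref{lem:QKL_basics} and then use an elementary scalar inequality. The scalar inequality is
\[
y-x+x\ln\frac{x}{y}\ \ge\ x+(1-\euler)y\qquad\text{for } x\ge 0,\ y>0 .
\]
Setting $t=x/y$, it is equivalent to $t\ln t-2t+\euler\ge 0$. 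The function $t\ln t-2t+\euler$ is convex, has derivative $\ln t-1$, and therefore attains its minimum at $t=\euler$. The minimum value is $\euler-2\euler+\euler=0$. The boundary cases $x=0$ (the inequality becomes $y\ge(1-\euler)y$) and $y=0=x$ (both sides vanish) are trivial.

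The first route I would try is to coarse-grain to a single projection. Take $P=I$ in the pinching of \cref{lem:QKL_basics}; that map is the identity and gives nothing. What is really needed is the trace-preserving map $\Phi(\rho)=\tr(\rho)\,\omega$ onto a one-dimensional algebra, i.e.\ monotonicity under taking the trace, which is a conditional expectation onto the scalars. The paper's proof of \cref{lem:ineq_QKL} already uses the resulting two-term bound. Taking $P=I$ there, or a projection $P$ with $f_\rho(I-P)=f_{\rho_0}(I-P)=0$, monotonicity should give
\[
\QKL(\rho,\rho_0)\ \ge\ \tr\rho_0-\tr\rho+\tr\rho\,\ln\frac{\tr\rho}{\tr\rho_0}.
\]
The scalar inequality with $x=\tr\rho$, $y=\tr\rho_0$ then finishes the proof. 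If $\tr\rho_0=0$, then $\rho_0=0$, so $\rho=0$ by the kernel condition, and the claim is trivial.

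To avoid relying on the trace-type expectation, a more robust alternative works directly with the definition. Let $\rho=\sum_l \rho_l e_l\otimes e_l$ be the spectral decomposition of $\rho$. Then
\[
\tr(\rho\ln\rho_0)=\sum_l \rho_l\,\langle e_l,\ln\rho_0\, e_l\rangle\le \sum_l\rho_l\ln\langle e_l,\rho_0e_l\rangle
\]
by Jensen's inequality for the concave function $\ln$ applied to the spectral measure of $\rho_0$ in the state $e_l$. This gives
\[
\QKL(\rho,\rho_0)\ge \sum_l\Bigl(\langle e_l,\rho_0e_l\rangle-\rho_l+\rho_l\ln\frac{\rho_l}{\langle e_l,\rho_0 e_l\rangle}\Bigr)+\Bigl(\tr\rho_0-\sum_l\langle e_l,\rho_0e_l\rangle\Bigr).
\]
Here the sum runs over a full orthonormal basis, so the last bracket is zero. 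The scalar inequality applied termwise then yields $\sum_l\rho_l+(1-\euler)\sum_l\langle e_l,\rho_0e_l\rangle=\tr\rho+(1-\euler)\tr\rho_0$.

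The main obstacle is rigor in infinite dimensions. The series $\tr(\rho\ln\rho)$ and $\tr(\rho\ln\rho_0)$ need not converge separately, so I would work with the combined sum of nonnegative terms $y-x+x\ln(x/y)$. Indices where $\rho_l=0$ contribute $\langle e_l,\rho_0e_l\rangle\ge0$. Indices where $\langle e_l,\rho_0 e_l\rangle=0$ force $\rho_l=0$ by the kernel condition. I would also justify interchanging trace and sums by monotone convergence, using that every term is nonnegative.
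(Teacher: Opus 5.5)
Your second route is correct, but it does the key Jensen step in the opposite basis from the paper.

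\textbf{The paper's route.} It works in the eigenbasis $\{f_j\}$ of $\rho_0$, after passing to $(\ker\rho_0)^\perp$ and listing the eigenvalues $d_j$ with multiplicity. It keeps $\tr(\rho\ln\rho_0)=\sum_j\langle f_j,\rho f_j\rangle\ln d_j$ exact. It then applies Jensen's trace inequality for $\eta(t)=t\ln t$ to the pinching $\sum_j P_j\rho P_j$, which gives $\sum_j\eta(\langle f_j,\rho f_j\rangle)\le\tr\eta(\rho)$. The intermediate bound is therefore the classical divergence between the $\rho_0$-diagonal of $\rho$ and the spectrum of $\rho_0$.

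\textbf{Your route.} You work in the eigenbasis of $\rho$ and keep $\tr(\rho\ln\rho)$ exact. You need only the scalar Jensen inequality for the concave $\ln$, applied to the spectral measure of $\rho_0$ in the state $e_l$. This gives the classical divergence between the spectrum of $\rho$ and the $\rho$-diagonal of $\rho_0$. The closing scalar inequality is the same in both: the paper takes the tangent line at $x=\euler y$, you take the minimum of $t\ln t-2t+\euler$ at $t=\euler$.

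\textbf{How the two relate.} Write $\QKL$ in Lindblad's double-series form over complete eigenbases, $\QKL(\rho,\rho_0)=\sum_{l,j}|\langle e_l,f_j\rangle|^2\,k(\rho_l,d_j)$ with $k(x,y)=y-x+x\ln(x/y)\ge 0$. Then the two proofs are just Jensen over $j$ using convexity of $k(x,\cdot)$ (yours), versus Jensen over $l$ using convexity of $k(\cdot,y)$ (the paper's).

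\textbf{What each buys.} Your version is slightly more elementary. It needs no operator Jensen inequality, no restriction to a subspace, and no multiplicity bookkeeping for $\rho_0$. The kernel condition enters only through $\langle e_l,\rho_0e_l\rangle=0\Rightarrow\rho_l=0$. Working with the nonnegative combined terms also handles the possible divergence of $\tr(\rho\ln\rho)$ and $\tr(\rho\ln\rho_0)$, which the paper glosses over by splitting the trace. The paper's version fits the pinching viewpoint it uses for \Cref{lem:ineq_QKL}.

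\textbf{Your first route.} You can drop it. Monotonicity under $\rho\mapsto\tr(\rho)\,\omega$ is not among the properties listed in \Cref{lem:QKL_basics}. The $P=I$ case of the two-term bound in the proof of \Cref{lem:ineq_QKL} is exactly that monotonicity, so it gives no independent justification.
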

\begin{proof}
    If \(\QKL(\rho,\rho_{0})\) is infinite, the inequality is trivial. In particular, we can assume 
    that $\rho,\rho_0\in \BopIpsd$. 
    We can further assume that the discrete spectrum of \(\rho_{0}\) does not contain zero by switching to a subspace if necessary. 
    Then the spectral decomposition of $\rho_0$ has the form \(\rho_{0}=\sum\nolimits_jd_jP_{j}\) with 
    eigenvalues \(d_j>0\) and corresponding eigen-projections \(P_{j}\) satisfying \(\sum\nolimits_jP_{j}=I\). By Jensen's trace inequality (see \cite{Brown:90}) for the convex function \(\eta,[0,\infty)\rightarrow\setR,t\mapsto t\ln t\) we get
    \begin{align*}
        \tr\left(\eta\left(\sum\nolimits_{j}P_{j}\rho P_{j}\right)\right)
        \leq\tr\left(\sum\nolimits_{j}P_{j}\eta(\rho) P_{j}\right)=\sum\nolimits_j\tr(P_{j}\eta(\rho) P_{j})=\sum\nolimits_{j}\tr\left(P_{j}\eta(\rho) )=\tr(\eta(\rho)\right).
    \end{align*}
   Assuming now that the eigenvalues $d_j$ occur with their multiplicities, i.e., $\operatorname{rank}P_j=1$
   for all $j$, we have $P_j\rho P_j = \rho_{jj}P_j$ for some ``diagonal elements'' $\rho_{jj}\geq 0$ of 
   $\rho$. This yields
    \[\sum\nolimits_j\eta(\rho_{jj})\leq\tr(\eta(\rho)).\]
    With this we now get
    \begin{align*}
        \QKL(\rho,\rho_{0})&=\tr(\rho_{0})-\tr(\rho)+\tr(\eta(\rho))-\tr(\rho\ln\rho_{0})\\
        &\geq \sum\nolimits_j\left(d_{j}-\rho_{jj}+\eta(\rho_{jj})\right)-\tr\left(\rho\sum\nolimits_j\ln(d_{j})P_{j}\right)\\
        &=\sum\nolimits_j\left(d_{j}-\rho_{jj}+\rho_{jj}\ln\rho_{jj}-\rho_{jj}\ln d_{j}\right).
    \end{align*}
    The function \(x\mapsto x\ln x-x\ln y-x\) is convex on \([0,\infty)\) and is therefore larger than its linear approximation at \(\euler y\). For \(x\in[0,\infty)\) and \(y\in(0,\infty)\) this implies the inequality
    \begin{align*}
       x\ln x-x\ln y-x&\geq\euler y\ln(\euler y)-(\euler y)\ln y-\euler y+(\ln(\euler y)-\ln y)(x-\euler y)\\
       &=0+1\cdot(x-\euler y)=x-\euler y\,. 
    \end{align*}
    Hence, $d_{j}-\rho_{jj}+\rho_{jj}\ln\rho_{jj}-\rho_{jj}\ln d_{j}\geq d_{j}+\rho_{jj}-\euler d_{j}$,
    and
    \begin{align*}
    \QKL(\rho,\rho_{0})
    &\geq\sum\nolimits_j\left(d_{j}+\rho_{jj}-\euler d_{j}\right)=\tr(\rho)+(1-\euler)\tr(\rho_{0}).\qedhere
    \end{align*}
\end{proof}
Often the quantum relative entropy is just considered for matrices with trace one, so this inequality is not very useful. However in our case, it allows us to show lower semicompactness without using additional trace constraints.

\begin{theorem}[\ref{C3}]\label{the:QKL_compact}
    The functional \(\QKL(\cdot,\rho_{0}):\BopI\rightarrow\setR\cup\{\infty\}\) for \(\rho_{0}\in\BopIpsd\) is sequentially weak-\(*\) lower semicompact. 
\end{theorem}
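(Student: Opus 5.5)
The plan is to show that each sublevel set
\[L_C:=\{\rho\in\tilde{\BopI}:\QKL(\rho,\rho_{0})\leq C\},\qquad C\in\setR,\]
is sequentially weak-\(*\) compact. We do this in two steps: first prove that \(L_C\) is bounded in the trace norm, then combine Banach--Alaoglu with the weak-\(*\) lower semicontinuity from \cref{lem:QKL_semicont}. If \(L_C\) is empty there is nothing to show, so assume it is not.

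\textbf{Step 1 (boundedness).} Take \(\rho\in L_C\). Since \(\QKL(\rho,\rho_0)<\infty\), the definition \eqref{eq:defi_QKL} forces \(\rho\in\BopIpsd\). Hence \(\|\rho\|_{\BopI}=\tr(\rho)\). By \cref{lem:QKL_trace_bound},
\[\tr(\rho)\leq \QKL(\rho,\rho_0)+(\euler-1)\tr(\rho_0)\leq C+(\euler-1)\tr(\rho_0)=:R.\]
So \(L_C\) lies in the closed ball of radius \(R\) in \(\tilde{\BopI}\). Note that no trace constraint is needed here. The linear terms in the definition of \(\QKL\) are exactly what make \cref{lem:QKL_trace_bound}, and hence this bound, available.

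\textbf{Step 2 (compactness).} By \cref{lem:spaces}, \(\tilde{\BopI}=\tilde{\Kop}^*\). By Banach--Alaoglu, the ball of radius \(R\) is weak-\(*\) compact. We need sequential compactness, so we use separability. Since \(\calH\) is separable, the compact operators \(\Kop\) form a separable Banach space (finite-rank operators built from a countable dense set are dense). Consequently \(\tilde{\Kop}\) is separable as well, being the image of \(\Kop\) under the continuous map \(\sigma\mapsto\frac12(\sigma+\sigma^*)\). Bounded sets in the dual of a separable Banach space are weak-\(*\) metrizable. Hence every sequence \((\rho_n)\subset L_C\) has a subsequence converging weak-\(*\) to some \(\rho\in\tilde{\BopI}\). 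If one prefers to avoid the metrizability argument, the subsequence can be extracted directly by a diagonal argument over a countable dense subset of \(\tilde{\Kop}\), using the uniform bound \(\|\rho_n\|_{\BopI}\leq R\).

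\textbf{Step 3 (closedness of the sublevel set).} It remains to show that the limit \(\rho\) lies in \(L_C\). By \cref{lem:QKL_semicont},
\[\QKL(\rho,\rho_0)\leq\liminf_{n}\QKL(\rho_{n},\rho_0)\leq C,\]
so \(\rho\in L_C\). This proves sequential weak-\(*\) compactness of every sublevel set.

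The only genuine content is the a priori bound in Step 1, which is immediate from \cref{lem:QKL_trace_bound}. The main obstacle is therefore just a technical point in Step 2: the statement is phrased on \(\BopI\), while \cref{lem:spaces} identifies the dual pairing only on the real space \(\tilde{\BopI}\). This is harmless, because \(\QKL(\cdot,\rho_0)=\infty\) outside \(\BopIpsd\subset\tilde{\BopI}\). Moreover, by \cref{lem:spaces} the weak-\(*\) topologies on \(\tilde{\BopI}\) and the relative topology from \(\BopI\) coincide, so compactness in one is compactness in the other.
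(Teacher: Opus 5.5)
Your proposal is correct and follows the paper's proof essentially step by step: you bound the sublevel set in trace norm via \cref{lem:QKL_trace_bound}, apply sequential Banach--Alaoglu using separability of the compact operators, and get sequential weak-$*$ closedness from \cref{lem:QKL_semicont}. Your extra care in working with the real spaces $\tilde{\BopI}=\tilde{\Kop}^*$ via \cref{lem:spaces} is a harmless refinement of the paper's argument, which works directly with the predual $\Kop$.
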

\begin{proof}
    Let \(C>0\) and let 
    \(\rho\in S_C:=\{\QKL(\cdot,\rho_{0})\leq C\}\). Because \(\QKL(\rho,\rho_{0})\) is finite we have \(\rho\in \BopIpsd\) and \(\tr(\rho)=\|\rho\|_{\BopI}\). From \Cref{lem:QKL_trace_bound} we then get
    \[C\geq\QKL(\rho,\rho_{0})\geq\tr(\rho)+(1-\euler)\tr(\rho_{0})
    \quad \Rightarrow \quad  \|\rho\|_{\BopI}\leq C+(\euler-1)\tr(\rho_{0}).\]
    With this we have shown that $S_C$ is contained in a norm bounded set. 
    As $\Kop$ is the norm closure of the space of finite rank operators it is separable \cite[Theroem~VI.13]{Reed:81}. Then, the sequential version of Banach-Alaoglu's theorem \cite[Theorem~3.17]{Rudin:91} implies that 
    $S_C$ is sequentially weak-$*$ precompact. As $S_C$ is sequentially weak-$*$ closed by \Cref{lem:QKL_semicont}, it is sequentially weak-$*$ compact.
\end{proof}

We can furthermore investigate the corresponding subdifferential and the Bregman divergence. The subdifferential is closely related to Klein's inequality and can be found, e.g.,  in \cite{Lanford:68,Duan:17}. 
We still reproduce the proof as it contains the calculation for the Bregman divergence.

\begin{lemma}\label{lem:QKL_subdiff}
    For \(\rho\in\BopIpsd\) if \(\ln\rho-\ln\rho_{0}\in\tilde{\Bop}\) is well-defined, then 
    \[\ln\rho-\ln\rho_{0}\in\partial(\QKL(\cdot,\rho_{0}))(\rho).\]
\end{lemma}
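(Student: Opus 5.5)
To show that \(\sigma:=\ln\rho-\ln\rho_{0}\) is a subgradient, I would verify the subgradient inequality
\[\QKL(\tilde\rho,\rho_{0})\geq\QKL(\rho,\rho_{0})+\tr\bigl(\sigma(\tilde\rho-\rho)\bigr)\quad\text{for all }\tilde\rho\in\tilde{\BopI}.\]
The pairing \(\tr(\sigma(\tilde\rho-\rho))\) is well defined because \(\sigma\in\tilde{\Bop}\) and \(\tilde\rho-\rho\in\BopI\). If \(\QKL(\tilde\rho,\rho_{0})=\infty\), the inequality is trivial. So I assume \(\tilde\rho\in\BopIpsd\) and \(\ker\rho_{0}\subseteq\ker\tilde\rho\).

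The core step is to compute the difference of the two sides. It should come out to be the quantum relative entropy between \(\tilde\rho\) and \(\rho\), which is exactly the claimed Bregman divergence. Formally, the linear terms \(\tr\rho_0\) cancel, and the \(\tr(\tilde\rho\ln\rho_0)\) and \(\tr(\rho\ln\rho_0)\) terms cancel against the corresponding parts of \(\tr(\sigma(\tilde\rho-\rho))\). What remains is
\begin{align*}
&\tr(\rho_0-\tilde\rho+\tilde\rho\ln\tilde\rho-\tilde\rho\ln\rho_0)-\tr(\rho_0-\rho+\rho\ln\rho-\rho\ln\rho_0)-\tr\bigl((\ln\rho-\ln\rho_0)(\tilde\rho-\rho)\bigr)\\
&\qquad=\tr(\rho-\tilde\rho+\tilde\rho\ln\tilde\rho-\tilde\rho\ln\rho)=\QKL(\tilde\rho,\rho).
\end{align*}
Then \Cref{lem:QKL_basics}(1) gives \(\QKL(\tilde\rho,\rho)\geq0\), which is the subgradient inequality; this is Klein's inequality.

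The main obstacle is justifying this formal splitting of traces in infinite dimensions. The individual terms \(\tilde\rho\ln\tilde\rho\), \(\tilde\rho\ln\rho_0\), etc. need not be trace class separately, and \(\ln\rho\), \(\ln\rho_0\) are unbounded. I would handle this as follows.
\begin{itemize}
\item The hypothesis that \(\ln\rho-\ln\rho_0\) is a well-defined bounded operator forces \(\ker\rho=\ker\rho_0\), and on the orthogonal complement of the kernels the unbounded parts cancel.
\item I would work only with the combinations that are known to be trace class. The terms \(\rho\ln\rho\) and \(\tilde\rho\ln\tilde\rho\) are trace class, since \(t\ln t\) is bounded on bounded sets and \(\sum\rho_l|\ln\rho_l|\) is finite when \(\QKL(\rho,\rho_0)<\infty\); otherwise the claim is vacuous, because \(\rho\) must lie in the domain for a subgradient to be meaningful.
\item Finiteness of \(\QKL(\tilde\rho,\rho_0)\) together with boundedness of \(\sigma\) gives that \(\tilde\rho(\ln\rho-\ln\rho_0)\) is trace class. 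Hence \(\tilde\rho\ln\rho\) differs from the trace-class-compatible \(\tilde\rho\ln\rho_0\) by a trace-class term.
\item As a safe route, I would evaluate all traces in the eigenbasis \(\{e_l\}\) of \(\rho\), or via finite-rank spectral truncations \(P_n\), compute the identity for the truncated sums, and pass to the limit \(n\to\infty\) using monotone or dominated convergence. Convergence is controlled by the boundedness of \(\sigma\) and the finiteness of the entropies.
\end{itemize}
Once the identity is established, nonnegativity of \(\QKL\) finishes the proof.
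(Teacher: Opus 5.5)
Your core computation is exactly the paper's proof. The paper expands $\QKL(\tilde\rho,\rho_0)-\QKL(\rho,\rho_0)-\tr\bigl((\ln\rho-\ln\rho_0)(\tilde\rho-\rho)\bigr)$, observes that it collapses to $\QKL(\tilde\rho,\rho)$, and concludes with \Cref{lem:QKL_basics}. It does this purely formally and never discusses the infinite-dimensional trace issues.

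Where you go wrong is in the extra justification you add. Your second bullet asserts that $\rho\ln\rho$ and $\tilde\rho\ln\tilde\rho$ are trace class because $\sum_l\rho_l|\ln\rho_l|<\infty$ whenever $\QKL(\rho,\rho_0)<\infty$. This is false. Take $\rho_0=\rho$ with eigenvalues proportional to $1/(l\ln^2 l)$, $l\geq2$. Then $\rho\in\BopIpsd$, $\ln\rho-\ln\rho_0=0$ is well defined and bounded, and $\QKL(\rho,\rho_0)=0$. Yet $\sum_l\rho_l|\ln\rho_l|\gtrsim\sum_l 1/(l\ln l)=\infty$. So $\tr(\rho\ln\rho)$, $\tr(\tilde\rho\ln\tilde\rho)$ and $\tr(\tilde\rho\ln\rho_0)$ can all be infinite even under the lemma's hypotheses. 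Your last bullet controls the limit passage by the finiteness of the entropies, so it has nothing to stand on.

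The remedy is never to split $\tilde\rho\ln\tilde\rho$ from the logarithm of the second argument. Work instead with Lindblad's representation of $\QKL$ as a double sum of nonnegative terms. In the situation of the paper's remark after the lemma, $\rho$ and $\rho_0$ share an orthonormal eigenbasis $\{e_j\}$ of $(\ker\rho_0)^\perp=(\ker\rho)^\perp$. Denote the eigenvalues by $r_j,s_j>0$, and write $\sigma e_j=\sigma_je_j$ with $\sigma_j=\ln r_j-\ln s_j$. For $\tilde\rho=\sum_it_i\langle\phi_i,\cdot\rangle\phi_i$ with $\ker\rho_0\subseteq\ker\tilde\rho$,
\[
\QKL(\tilde\rho,\rho_0)=\sum_{i,j}|\langle\phi_i,e_j\rangle|^2\bigl(s_j-t_i+t_i\ln t_i-t_i\ln s_j\bigr),
\]
and similarly for $\QKL(\tilde\rho,\rho)$ with $r_j$ in place of $s_j$.

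Corresponding summands differ by $s_j-r_j+t_i\sigma_j$. This is absolutely summable, with sum $\tr(\rho_0-\rho)+\tr(\tilde\rho\sigma)$, because $\sum_{i,j}|\langle\phi_i,e_j\rangle|^2t_i|\sigma_j|\leq\|\sigma\|\tr\tilde\rho$. Hence $\QKL(\tilde\rho,\rho_0)=\QKL(\tilde\rho,\rho)+\tr(\rho_0-\rho)+\tr(\tilde\rho\sigma)$ in $[0,\infty]$. Taking $\tilde\rho=\rho$ gives $\QKL(\rho,\rho_0)=\tr(\rho_0-\rho)+\tr(\rho\sigma)<\infty$, so finiteness at $\rho$ follows from the hypothesis and need not be assumed. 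Subtracting the two identities yields the Bregman identity rigorously, without any entropy having to be finite.
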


\begin{proof}
We take \(\sigma\in\tilde{\BopI}\) and calculate
\begin{align*}
    \QKL(\sigma,\rho_{0})&-\QKL(\rho,\rho_{0})\\
    &-\tr((\ln\rho-\ln\rho_{0})(\sigma-\rho))=\tr(\rho_{0}-\sigma+\sigma\ln\sigma-\sigma\ln\rho_{0})\\
    &\phantom{-\tr((\ln\rho-\ln\rho_{0})(\sigma-\rho))=}-\tr(\rho_{0}-\rho+\rho\ln\rho-\rho\ln\rho_{0})\\
    &\phantom{-\tr((\ln\rho-\ln\rho_{0})(\sigma-\rho))=}-\tr(\sigma\ln\rho-\rho\ln\rho-\sigma\ln\rho_{0}+\rho\ln\rho_{0})\\
    &\phantom{-\tr((\ln\rho-\ln\rho_{0})(\sigma-\rho))}=\tr(\rho-\sigma+\sigma\ln\sigma-\sigma\ln\rho)=\QKL(\sigma,\rho)\geq0.
\end{align*}
The last inequality follows from \Cref{lem:QKL_basics}.
\end{proof}
\begin{remark}
    In this context by well-defined we mean that we treat \(\ln\rho-\ln\rho_{0}\) as single operator and not as the difference of two operators, that have to be well-defined. This entails that we set \(\ln\rho-\ln\rho_{0}\) to be \(0\) on the intersection of the kernels of \(\rho\) and \(\rho_{0}\). Furthermore, for \(v\) in a common eigenspace of \(\rho\) and \(\rho_{0}\) corresponding to \(\lambda\) and \(\lambda_{0}\) respectively, we have \((\ln\rho-\ln\rho_{0})v=(\ln\lambda-\ln\lambda_{0})v\) and then accordingly define \(\ln\rho-\ln\rho_{0}\) on the union of the common eigenspaces via linear extension. This allows us to still work with operator logarithms even though the spectra of \(\rho\) and \(\rho_{0}\) always have \(0\) as an accumulation point so \(\ln\rho\) and \(\ln\rho_{0}\) separately are not well-defined bounded operators.
\end{remark}
\begin{remark}[\ref{C7}]
    To fulfill condition \ref{C7} the subdifferential has to be considered as a subset of the dual space of \(\tilde{\BopI}\) with respect to the weak-\(*\) topology. This is not the same as the usual dual space \(\tilde{\Bop}\) and by the properties of the weak-\(*\)  topology \cite[Section~3.14]{Rudin:91} coincides with its predual \(\tilde{\Kop}\) instead. Therefore, the stronger assumption \(\ln\rho-\ln\rho_{0}\in\tilde{\Kop}\) is necessary for \ref{C7}.
\end{remark}
With this subdifferential we can calculate the corresponding Bregman divergence.
\begin{corollary}\label{cor:QKL_bregman_divergence}
    For \(\rho\in\BopIpsd\) if \(\ln\rho-\ln\rho_{0}\in\Bop\) is well-defined, then the Bregman divergence with respect to \(\ln\rho-\ln\rho_{0}\in\partial(\QKL(\cdot,\rho_{0}))(\rho)\) is given by
    \[\Delta_{\QKL(\cdot,\rho_{0})}^{\ln\rho-\ln\rho_{0}}(\sigma,\rho)=\QKL(\sigma,\rho).\]
\end{corollary}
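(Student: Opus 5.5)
The plan is to apply the definition of the Bregman divergence directly and reuse the computation already carried out in the proof of \Cref{lem:QKL_subdiff}. For a convex functional $\calR$ and a subgradient $\xi\in\partial\calR(\rho)$, the Bregman divergence is
\[
\Delta_{\calR}^{\xi}(\sigma,\rho)=\calR(\sigma)-\calR(\rho)-\tr\bigl(\xi(\sigma-\rho)\bigr).
\]
We take $\calR=\QKL(\cdot,\rho_{0})$ and $\xi=\ln\rho-\ln\rho_{0}$, which is a subgradient at $\rho$ by \Cref{lem:QKL_subdiff}. The identity to verify is then exactly the chain of equalities in that proof, read as an equality rather than as the inequality $\geq 0$.

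The steps are as follows. First, treat the case $\sigma\notin\BopIpsd$, or more generally $\QKL(\sigma,\rho_{0})=\infty$. In this case the left-hand side is $+\infty$, because $\QKL(\rho,\rho_{0})<\infty$ and the pairing term is finite, since $\xi\in\Bop$ and $\sigma-\rho\in\BopI$. We must then check that $\QKL(\sigma,\rho)=\infty$ as well. This holds because $\ker\rho\subseteq\ker\rho_{0}$ up to the convention that $\ln\rho-\ln\rho_{0}$ is bounded; indeed, boundedness of $\xi$ forces $\rho$ and $\rho_{0}$ to share kernels and to have comparable eigenvalues on common eigenspaces. Second, for $\sigma$ with finite $\QKL(\sigma,\rho_{0})$, expand the three traces. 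The terms $\tr\rho_{0}$ cancel, as do $\tr(\rho\ln\rho)$ and $\tr(\rho\ln\rho_{0})$, and so do the two occurrences of $\tr(\sigma\ln\rho_{0})$. What remains is $\tr(\rho-\sigma+\sigma\ln\sigma-\sigma\ln\rho)=\QKL(\sigma,\rho)$.

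The main obstacle is justifying the splitting of traces, since $\ln\rho$ and $\ln\rho_{0}$ are not individually bounded and expressions like $\tr(\sigma\ln\rho_{0})$ may be infinite. To handle this, I would never separate $\ln\rho$ from $\ln\rho_{0}$. Instead I would group the terms as
\[
\tr(\sigma\ln\sigma-\sigma\ln\rho_{0})-\tr\bigl(\sigma(\ln\rho-\ln\rho_{0})\bigr)=\tr(\sigma\ln\sigma-\sigma\ln\rho),
\]
and similarly for the terms involving $\rho$, so that only the bounded operator $\xi$ is paired with trace-class operators. To make this rigorous, I would work in the common eigenbasis used in the Remark after \Cref{lem:QKL_subdiff}, apply the identity to finite-rank compressions, and pass to the limit using absolute convergence of the finite sums defining $\QKL(\sigma,\rho_{0})$ and $\QKL(\rho,\rho_{0})$. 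The kernel conventions on $\ker\rho$ and $\ker\sigma$ must be matched along the way.
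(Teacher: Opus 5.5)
Your proposal is correct and follows the paper's proof. The paper simply notes that the chain of equalities in the proof of \Cref{lem:QKL_subdiff}, read as an identity rather than as the inequality $\geq 0$, is exactly the definition of the Bregman divergence; your extra care about infinite values and about pairing only the bounded operator $\ln\rho-\ln\rho_{0}$ with trace-class operators goes beyond what the paper writes.

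One small loose end remains. Suppose $\ker\rho_{0}\subseteq\ker\sigma$ but $\QKL(\sigma,\rho_{0})=\infty$ because the trace diverges. Then $\QKL(\sigma,\rho)=\infty$ does not follow from your kernel argument; it follows from your grouped identity, which holds in $[0,\infty]$. To see this, expand both entropies in the eigenbasis of $\sigma$ and the common eigenbasis of $\rho,\rho_{0}$. The two resulting series of nonnegative terms differ termwise by an absolutely summable series with sum $\tr(\sigma(\ln\rho-\ln\rho_{0}))+\tr(\rho_{0}-\rho)$.
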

\begin{proof}
    In the proof of the previous theorem we already calculated
    \begin{align*}
    \QKL(\sigma,\rho_{0})&-\QKL(\rho,\rho_{0})-\tr((\ln\rho-\ln\rho_{0})(\sigma-\rho))=\QKL(\sigma,\rho)
\end{align*}
and the left hand side is exactly the Bregman divergence.
\end{proof}

\subsection{Weak-$*$-continuity properties of operator and data fidelity functional}
We write the real sequence spaces as \(\ell^{p}(\setZ)\) and indicate the complex sequence spaces by \(\ell^{p}(\setZ,\setC)\). The codomain of the operator $T$ also has a predual, giving rise to a weak-$*$-topology,
which is given by
\begin{align*}
   L^{2}(I,\ell^{p}(\setZ))\predual&:=\begin{cases}
        L^{2}(I,c_{0}(\setZ)),& p=1\\
        L^{2}(I,\ell^{p'}(\setZ)),& p\in(1,\infty)\quad \frac{1}{p}+\frac{1}{p'}=1.
    \end{cases}
\end{align*}
The facts that $L^{2}(I,c_{0}(\setZ))^* = L^{2}(I,\ell^{1}(\setZ))$ and 
$L^{2}(I,\ell^{p'}(\setZ))^* =  L^{2}(I,\ell^{p}(\setZ))$ 
can be derived from \cite[Chapter 1.3]{Hytonen:16} using the separability of  \(\ell^{p}(\setZ)\) for \(p\in[1,\infty)\). 

 We first show that the operator \(T\) is weak-\(*\) to weak-\(*\) sequentially continuous.

 \begin{lemma}[\ref{C4}]\label{lem:cont_op}
An operator $T$ of the form \eqref{al:general_op}  is weak-\(*\) to weak-\(*\) sequentially  continuous 
if \(\{U_{\theta}\in \Bop:\theta\in I\}\) is uniformly bounded (in particular, if 
all operators $U_{\theta}$ are unitary). 
\end{lemma}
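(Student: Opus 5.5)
Let \((\rho_n)\subset\tilde{\BopI}\) (or \(\BopIpsd\)) converge weak-\(*\) to \(\rho\). We must show that \(T\rho_n\to T\rho\) weak-\(*\) in \(L^{2}(I,\ell^{p}(\setZ))\), i.e.
\[
\int_I\sum\nolimits_l g(\theta,l)\,(T\rho_n)(\theta,l)\,\dd\theta \;\longrightarrow\; \int_I\sum\nolimits_l g(\theta,l)\,(T\rho)(\theta,l)\,\dd\theta
\qquad\text{for all } g\in L^{2}(I,\ell^{p}(\setZ))\predual .
\]
Pointwise convergence is easy. For fixed \(\theta\) and \(l\),
\[
(T\rho_n)(\theta,l)=\tr\bigl(\rho_n\, U_\theta^*P_{e_l}U_\theta\bigr),
\]
and \(U_\theta^*P_{e_l}U_\theta\) is a rank-one, hence compact, hermitian operator. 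By \Cref{lem:spaces} the weak-\(*\) convergence of \(\rho_n\) is tested against \(\tilde{\Kop}\), so \((T\rho_n)(\theta,l)\to(T\rho)(\theta,l)\).

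We then upgrade to convergence for all fixed \(\theta\), tested against \(\ell^{p'}\) or \(c_0\). The sequence \((\rho_n)\) is weak-\(*\) convergent, so by the uniform boundedness principle \(\sup_n\|\rho_n\|_{\BopI}=:M<\infty\). Since \(U_\theta\rho U_\theta^*\) is trace class, the diagonal satisfies
\[
\|(T\rho)(\theta,\cdot)\|_{\ell^p}\le\|(T\rho)(\theta,\cdot)\|_{\ell^1}\le\|U_\theta\rho U_\theta^*\|_{\BopI}\le K^2\|\rho\|_{\BopI},
\]
where \(K:=\sup_\theta\|U_\theta\|\). So the sequences \((T\rho_n)(\theta,\cdot)\) are bounded in \(\ell^p\) uniformly in \(n\) and \(\theta\). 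For fixed \(\theta\), pick \(h\in\ell^{p'}\) (or \(c_0\)) and set \(h_N:=h\,\mathbb 1_{|l|\le N}\). Then \(\sum_l h_N(l)(T\rho_n)(\theta,l)\to\sum_l h_N(l)(T\rho)(\theta,l)\) by the pointwise step. The error from the tail is at most \(\|h-h_N\|_{\ell^{p'}}\,K^2M\), uniformly in \(n\), and tends to \(0\) as \(N\to\infty\).

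In fact one can check directly that \(\sum_l h(l)U_\theta^*P_{e_l}U_\theta\) is compact, because it is a norm limit of finite-rank operators when \(h\in c_0\). For \(p>1\), however, \(h\in\ell^{p'}\) still lies in \(c_0\), and one only needs \(\ell^\infty\)-type bounds. So the cleaner route is simply to note that
\[
\sigma_{h,\theta}:=U_\theta^*\Bigl(\sum\nolimits_l h(l)P_{e_l}\Bigr)U_\theta\in\tilde{\Kop}
\]
for any real \(h\in c_0\), with \(\|\sigma_{h,\theta}\|\le K^2\|h\|_\infty\). Hence \(\int_I\) applied to \(\tr(\rho_n\sigma_{g(\theta),\theta})\) converges pointwise in \(\theta\).

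Finally we integrate over \(\theta\) using dominated convergence. The integrand satisfies
\[
\bigl|\tr(\rho_n\sigma_{g(\theta),\theta})\bigr|\le M K^2\|g(\theta)\|_{\ell^{p'}}.
\]
This bound is in \(L^2(I)\subset L^1(I)\) because \(I\) is bounded. Measurability of \(\theta\mapsto(T\rho)(\theta,\cdot)\) is needed for \(T\) to map into the Bochner space at all, and we take it as part of the standing assumption that \(T\) is well defined.

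I expect the main obstacle to be the justification of exchanging the sum over \(l\) with the trace, namely showing
\[
\sum\nolimits_l h(l)(T\rho)(\theta,l)=\tr(\rho\,\sigma_{h,\theta}).
\]
I would handle this via the finite truncation \(h_N\) together with norm convergence \(\sigma_{h_N,\theta}\to\sigma_{h,\theta}\), which holds because \(h\in c_0\) and trace class pairs continuously with bounded operators.
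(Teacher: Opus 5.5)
Your proposal is correct and follows the paper's proof. At fixed $\theta$ you write the pairing as $\tr(\rho_n U_\theta^* D_h U_\theta)$ with the compact diagonal operator $D_h$ ($h\in c_0$), bound it uniformly through $\|(T\rho_n)(\theta,\cdot)\|_{\ell^1}\le \|U_\theta\|^2\|\rho_n\|_{\BopI}$, and integrate over the bounded set $I$ by dominated convergence. The exchange of sum and trace that you flag as the main obstacle is immediate: after using cyclicity of the trace, evaluate it in the basis $\{e_l\}$, which diagonalizes $D_h$. Your truncation argument is also valid.
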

\begin{proof}
    
We start with a sequence \(\rho_{n}\in \BopIpsd\) that converges to \(\rho\) in the 
weak-$*$ topology, i.e., 
\[\lim_{n\rightarrow\infty}\tr(\sigma\rho_{n})=\tr(\sigma\rho)\quad \text{for all } \sigma\in \Kop.\]
We first show point-wise convergence where we fix \(\theta\). For \((a_{l})_{l\in\setZ}\in (\ell^{p'})\) we have
\begin{align*}
    \langle (T\rho)_{\theta},a\rangle&=\sum_{l\in\setZ}(T\rho)(\theta,l)a_{l}=\sum_{l\in\setZ}\langle a_{l}e_{l},U_{\theta}\rho U_{\theta}^{*}e_{l}\rangle=\tr(D_{a}U_{\theta}\rho U_{\theta}^{*})\\
    \text{ with }
    D_{a}v&:=\sum_{l}a_{l}(e_l,v)e_l.
\end{align*}
Then \(D_{a}\) is compact because \(a\in (\ell^{p'})\subset c_{0}\) and
\begin{align*}
    \lim_{n\rightarrow\infty}\tr(D_{a}U_{\theta}\rho_{n} U_{\theta}^{*})&=\lim_{n\rightarrow\infty}\tr(\rho_{n} U_{\theta}^{*}D_{a}U_{\theta})=\tr(\rho U_{\theta}^{*}D_{a}U_{\theta})\\
    &=\tr(D_{a}U_{\theta}\rho U_{\theta}^{*})=\langle (T\rho)_{\theta},a\rangle.
\end{align*}
Because every weak-\(*\)-convergent sequence is norm bounded we can assume that \(\rho_{n}\) is bounded in norm by some constant \(C\). We now take \(a(\theta) \in L^{2}(I,(\ell^{p'}(\setZ)))\) and get
\begin{align*}
    |\langle (T\rho_{n})_{\theta},a(\theta)\rangle|&\leq\|a(\theta)\|_{(\ell^{p'}(\setZ))}\|(T\rho_{n})_{\theta}\|_{\ell_{p}(\setZ)}\leq\|a(\theta)\|_{(\ell^{p'}(\setZ))}\|(T\rho_{n})_{\theta}\|_{\ell_{1}(\setZ)}\\
    &=\|a(\theta)\|_{(\ell^{p'}(\setZ))}\tr(U_{\theta}\rho_{n}U_{\theta}^{*})\leq \|a(\theta)\|_{(\ell^{p'}(\setZ))}\|U_{\theta}^{*}U_{\theta}\|\tr(\rho_{n})\\
    &=\|a(\theta)\|_{(\ell^{p'}(\setZ))}\|U_{\theta}\|^{2}\|\rho_{n}\|_{1}\leq\|a(\theta)\|_{(\ell^{p'}(\setZ))}\|U_{\theta}\|^{2}C.
\end{align*}
So \(|\langle (T\rho_{n})_{\theta},a(\theta)\rangle|\) is point-wisely uniformly bounded by a function in \(L^{2}(I)\) and by the dominated convergence theorem \(\langle (T\rho_{n})_{\theta},a(\theta)\rangle\) converges to \(\langle (T\rho)_{\theta},a(\theta)\rangle\) in \(L^{2}\) which finishes the proof.
\end{proof}

To analyze both $L^2$ and Kullback-Leibler data fidelity terms simultaneously and additionally allow the use of different mixed \(L^{p}\)-norms, we define \(\calS_{\gobs}:L^{2}(I,\ell^{p}(\setZ))\rightarrow[0,\infty]\) by
\[\calS_{\gobs}(f):=\int_{I}\sum_{l\in\setZ}s(\gobs(\theta,l),f(\theta,l))\dd\theta\]
where \(s:\setR\times\setR\rightarrow[0,\infty]\) and \(p\in[1,\infty)\). We get the usual \(L^{2}\)-norm with \(p=2\) and
\[s_{2}(x,y):=\frac{1}{2}|x-y|^{2}.\]
Furthermore by setting
\begin{align*}
    s_{KL}(x,y):=\begin{cases}\infty\quad &\text{if }x<0\text{ or }y<0\text{ or }(x>0\text{ and } y=0)\\
    y\quad&\text{if }x=0\text{ and } y\geq0\\
    y-x+x\ln\frac{x}{y}\quad &\text{else}
\end{cases}
\end{align*}
and \(p=1\) we get the Kullback-Leibler divergence.
As the codomain is a mixed \(L^{p}\)-space, weak-\(*\) lower semicontinuity of the data fidelity functional is not immediately obvious. We will see, that in our case we can reduce the problem to investigating the lower semicontinuity on \(L^{2}(I)\).

\begin{lemma}\label{lem:func_lsc}
    If \(\calF_{s,l}:L^{2}(I)\rightarrow[0,\infty],f\mapsto\int_{I}s(\gobs(\theta,l),f(\theta))\dd \theta\) is convex and lower semicontinuous for all \(l\in\setZ\) with respect to the norm topology, then \(\calS_{\gobs}\) is weak-\(*\) lower semicontinuous. Furthermore, if \(s(x,y)=0\Leftrightarrow x=y\) then \(\calS_{\gobs}(g)=0\Leftrightarrow g=\gobs\).
\end{lemma}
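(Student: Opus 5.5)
The plan is to reduce weak-$*$ lower semicontinuity of $\calS_{\gobs}$ on $L^{2}(I,\ell^{p}(\setZ))$ to weak lower semicontinuity of each $\calF_{s,l}$ on $L^{2}(I)$, and then combine the components with Fatou's lemma for the sum over $l$. Since $\calS_{\gobs}(f)=\sum_{l}\calF_{s,l}(f(\cdot,l))$ by Tonelli (all terms are nonnegative), it is enough to know how each coordinate function $f_n(\cdot,l)$ behaves along a weak-$*$ convergent sequence $f_n\to f$.

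\emph{Step 1 (coordinates converge weakly in $L^{2}(I)$).} Fix $l\in\setZ$ and $b\in L^{2}(I)$. Put $a(\theta):=b(\theta)\delta_{l}$, where $\delta_l$ is the $l$-th unit sequence. This $a$ lies in the predual $L^{2}(I,c_{0}(\setZ))$ for $p=1$, or in $L^{2}(I,\ell^{p'}(\setZ))$ for $p>1$. Weak-$*$ convergence therefore gives
\[\int_I f_n(\theta,l)b(\theta)\,\dd\theta=\langle f_n,a\rangle\to\langle f,a\rangle=\int_I f(\theta,l)b(\theta)\,\dd\theta .\]
We also need $f_n(\cdot,l)\in L^{2}(I)$. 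This holds because $|f_n(\theta,l)|\le\|f_n(\theta)\|_{\ell^p}$, which is in $L^{2}(I)$. Hence $f_n(\cdot,l)\rightharpoonup f(\cdot,l)$ weakly in $L^{2}(I)$. The same argument works for nets, if topological rather than sequential lower semicontinuity is wanted.

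\emph{Step 2 (weak lsc of each $\calF_{s,l}$).} By hypothesis, $\calF_{s,l}$ is convex and norm lower semicontinuous. Its sublevel sets are therefore convex and norm closed, and by Mazur's theorem they are weakly closed. So $\calF_{s,l}$ is weakly lower semicontinuous on $L^{2}(I)$, and
\[\calF_{s,l}(f(\cdot,l))\le\liminf_n\calF_{s,l}(f_n(\cdot,l)).\]

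\emph{Step 3 (summing over $l$).} Apply Fatou's lemma for series with nonnegative terms:
\[\calS_{\gobs}(f)=\sum_l\calF_{s,l}(f(\cdot,l))\le\sum_l\liminf_n\calF_{s,l}(f_n(\cdot,l))\le\liminf_n\sum_l\calF_{s,l}(f_n(\cdot,l))=\liminf_n\calS_{\gobs}(f_n).\]
The only delicate point is Step 1, namely identifying the correct predual element that isolates a single coordinate. In particular, for $p=1$ one must check that $b\,\delta_l$ indeed belongs to $L^{2}(I,c_0(\setZ))$, which holds since $\|b(\theta)\delta_l\|_{c_0}=|b(\theta)|$. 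Everything else is standard.

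\emph{The characterization of zeros.} If $g=\gobs$, then every integrand equals $s(x,x)=0$, so $\calS_{\gobs}(g)=0$. Conversely, suppose $\calS_{\gobs}(g)=0$. The nonnegative integrand $\theta\mapsto\sum_l s(\gobs(\theta,l),g(\theta,l))$ then vanishes for almost every $\theta$. Hence for almost every $\theta$ and every $l$ we have $s(\gobs(\theta,l),g(\theta,l))=0$, which by assumption gives $g(\theta,l)=\gobs(\theta,l)$. Therefore $g=\gobs$ as elements of $L^{2}(I,\ell^{p}(\setZ))$.
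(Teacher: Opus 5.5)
Your proof is correct and follows essentially the same route as the paper. Both arguments test weak-$*$ convergence against predual elements supported on finitely many coordinates to get weak convergence in $L^{2}(I)$, use that convex norm-lsc functionals are weakly lsc (Mazur), and then pass to the full sum over $l$. The only difference is cosmetic: the paper bundles the coordinates $|l|<n$ into $L^{2}(I,\setR^{2n-1})$ and takes the supremum of the partial-sum functionals $\calF_{n}$, whereas you treat one coordinate at a time and invoke Fatou's lemma for series. Your $\liminf$ inequality is also stated more carefully than the paper's, which writes the corresponding step as a limit equality.
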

\begin{proof}
We will use that the supremum of lower semicontinuous functionals is lower semi-continuous as well. We then for \(n\in \setN\) define
\[\calF_{n}(f):=\int_{I}\sum_{|l|<n}s(\gobs(\theta,l),f(\theta,l))\dd \theta\]
and because of the non-negativity of the summands we get \(\calF(f)=\sup_{n}\calF_{n}(f)\).
Now consider \((f_{k})_{k}\subset L^{2}(I,\ell^{p}(\setZ))\) which converges in the weak-\(*\)-topology to \(f\). This means
\[\lim_{k\rightarrow\infty}\langle f_{k},a\rangle=\langle f,a\rangle \quad\text{for all }a\in L^{2}(I,(\ell^{p'}(\setZ)))\]
Now let \(P_{n}:L^{2}(I,\ell^{p}(\setZ))\rightarrow L^{2}(I,\setR^{2n-1})\) be the point-wise restriction of \(f_{k}\) such that
\[(P_{n}f)(\theta,l)=f(\theta,l) \quad \text{for all }|l|<n\]
Furthermore, define the embedding \(\iota_{n}:L^{2}(I,\setR^{2n-1})\rightarrow L^{2}(I,(\ell^{p'}(\setZ)))\) by
\[(\iota_{n}a)_{j}(\theta)=\begin{cases}
    a(\theta,l) \quad |l|<n\\
    0 \quad \text{ else}
\end{cases}\]
 We get for all \(a^{(n)}\in L^{2}(I,\setR^{2n-1})\)
\[\lim_{k\rightarrow\infty}\langle P_{n}f_{k},a^{(n)}\rangle=\lim_{k\rightarrow\infty}\langle f_{k},\iota_{n}a^{(n)}\rangle=\langle f,\iota_{n}a^{(n)}\rangle=\langle P_{n}f,a^{(n)}\rangle\]
This implies that \((P_{n}f_{k})_{k}\) converges weakly in \(L^{2}(I,\setR^{2n-1})\). For convex functionals weak lower semicontinuity is equivalent to strong lower semicontinuity. Since \(\calF_{s,l}\) is lower semicontinuous on \(L^{2}(I)\) this transfers to the restriction of \(\calF_{n}\) on \(L^{2}(I,\setR^{2n-1})\), which by definition coincides with \(\calF_{n}\). Therefore, \(\calF_{n}\) is weakly lower semicontinuous and we get
\[\lim_{k\rightarrow\infty}\calF_{n}(f_{k})=\lim_{k\rightarrow\infty}\calF_{n}(P_{n}f_{k})=\calF_{n}(P_{n}f)=\calF_{n}(f).\]
This shows that \(\calF_{n}\) is weak-\(*\) lower semicontinuous and by taking the supremum then also \(\calF\) is weak-\(*\) lower semicontinuous.

For the second part of the statement we use that if \(g=\gobs\) then \(s(\gobs(\theta,l),g(\theta,l))=0\) which implies \(\calS_{\gobs}(g)=0\). For the opposite direction, if \(\calS_{\gobs}(g)=0\) we know from the positivity of \(s\) that \(s(\gobs(\theta,l),g(\theta,l))=0\) almost everywhere. This implies \(g=\gobs\) almost everywhere.
\end{proof}
As a corollary we get the desired properties for \(\KL\) and \(\|\,\|_{2}\).
\begin{corollary}[\ref{C5}]\label{cor:funcs_cont}
    The functionals \(\frac{1}{2}\|\cdot-\gobs\|_{L^{2}(I,\ell^{2}(\setZ))}^{2}\) on \(L^{2}(I,\ell^{2}(\setZ))\) and \(\KL(\gobs,\cdot)\) on \(L^{2}(I,\ell^{1}(\setZ))\) are weak-{*} lower semicontinuous and fulfill \ref{C5}.
\end{corollary}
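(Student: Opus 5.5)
The plan is to deduce the corollary from \Cref{lem:func_lsc}. For each functional we check three things: the scalar integrand $s$ vanishes exactly on the diagonal; for every fixed $l$ the functional $\calF_{s,l}(f)=\int_I s(\gobs(\theta,l),f(\theta))\,\dd\theta$ is convex and norm lower semicontinuous on $L^2(I)$; and the convergence requirement $\lim_k\calS_{\gobs_k}(\gdag)=0$ in \ref{C5}, which we read as the standing assumption on the noisy data (e.g.\ $\|\gobs_k-\gdag\|\to0$, respectively $\KL(\gobs_k,\gdag)\to0$). Once these hold, \Cref{lem:func_lsc} gives weak-$*$ lower semicontinuity and the equivalence $\calS_{\gobs}(g)=0\Leftrightarrow g=\gobs$.

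For the quadratic case we take $p=2$ and $s_2(x,y)=\frac12|x-y|^2$. Then $s_2(x,y)=0\Leftrightarrow x=y$, and $\calF_{s_2,l}(f)=\frac12\|f-\gobs(\cdot,l)\|_{L^2(I)}^2$. This is finite, convex and norm continuous on $L^2(I)$, provided $\gobs\in L^2(I,\ell^2(\setZ))$, which guarantees $\gobs(\cdot,l)\in L^2(I)$ for each $l$. Alternatively one can observe directly that a squared norm in a dual space is weak-$*$ lower semicontinuous, but routing the argument through the lemma treats both cases uniformly.

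For the Kullback--Leibler case we take $p=1$ and $s=s_{KL}$.
\begin{itemize}
\item \emph{Convexity.} The function $s_{KL}(x,\cdot)$ is convex in $y$ for fixed $x\ge0$, with values in $[0,\infty]$. It is the perspective-type function $y-x+x\ln(x/y)$, extended by $\infty$ for $y<0$ or ($y=0$, $x>0$), and equal to $y$ when $x=0$. Convexity of the integrand gives convexity of $\calF_{s_{KL},l}$.
\item \emph{Diagonal.} We have $s_{KL}(x,y)\ge0$ with equality iff $x=y$. This follows from $\ln t\le t-1$, together with a separate check of the boundary cases $x=0$ and $y=0$.
\item \emph{Lower semicontinuity.} We take $f_k\to f$ in $L^2(I)$ and pass to a subsequence realising the $\liminf$ of $\calF_{s_{KL},l}(f_k)$. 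A further subsequence converges almost everywhere. The map $y\mapsto s_{KL}(x,y)$ is lower semicontinuous as an extended-real function, including at $y=0$, where the value jumps to $\infty$ only when $x>0$, and on $y<0$ it is $\infty$. Since $s_{KL}\ge0$, Fatou's lemma then gives $\calF_{s_{KL},l}(f)\le\liminf_k\calF_{s_{KL},l}(f_k)$.
\end{itemize}

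The main obstacle is this lower semicontinuity step. The pointwise lower semicontinuity of $s_{KL}$ at the boundary $y=0$ and for $y<0$ must be verified carefully. There is also a subtlety that \Cref{lem:func_lsc} is phrased with $L^2(I)$ in the slices while the data space is $L^2(I,\ell^1)$; since only the per-$l$ slices enter, this is harmless. Fatou needs only nonnegativity, not integrability of $\gobs$, so no assumptions beyond $\gobs\ge0$ should be required.
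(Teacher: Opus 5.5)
Your proposal is correct and takes the paper's route: reduce via \Cref{lem:func_lsc} to convexity and norm lower semicontinuity of each slice functional on $L^2(I)$, and treat $\calS_{\gobs_k}(\gdag)\to0$ as a standing assumption on the data. The only difference is in the Kullback--Leibler slice. You prove its lower semicontinuity directly (a.e.-convergent subsequence, pointwise lower semicontinuity of $s_{KL}(x,\cdot)$, Fatou), whereas the paper cites the $L^1(I)$ result of \cite{Eggermont93} and implicitly uses $L^2(I)\hookrightarrow L^1(I)$ for bounded $I$. You also make explicit the convexity and the requirement $\gobs\ge0$ for $s_{KL}(x,y)=0\Leftrightarrow x=y$, which the paper leaves unstated.
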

\begin{proof}
    By \Cref{lem:func_lsc} we just need that \(\|\cdot-f\|_{L^{2}(I)}^{2}\) is lower semicontinuous for all \(f\in L^{2}(I)\) which is obvious and for \(\KL\) we need that \(\KL\) is lower semicontinuous on \(L^{2}(I)\) which follows from the lower semicontinuity of \(\KL\) on \(L^{1}(I)\) that was shown in \cite{Eggermont93}.
\end{proof}

If one is willing to impose further a-priori assumptions on the unknown density matrix $\rho$, then 
weak-$*$-continuity of $T$ can often be established by the following simple lemma:
\begin{lemma}\label{lemm:embedding}
Suppose that a linear operator $T:\dom T\subset \BopI \to \calY$ mapping into some 
Banach space $\calY$  is defnied on a domain $\dom T$ that is weakly sequentially compact in some 
Banach space $\hat{\calB}$, and $\hat{\calB}$ is continuously embedded in $\BopI$ with respect to the norm topologies. Further assume that $T$ is continuous with respect to the norm topologies of $\hat{\calB}$ and 
$\calY$. Then $T$ is weak-$*$ to weak seqeuntially continuous from $\BopI$ to $\calY$. 
\end{lemma}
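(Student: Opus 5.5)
The plan is a subsequence argument. Take a sequence \((\rho_n)\subset\dom T\) with \(\rho_n\to\rho\) weak-\(*\) in \(\BopI\) and \(\rho\in\dom T\). We must show \(T\rho_n\rightharpoonup T\rho\) weakly in \(\calY\). For this it suffices to show that every subsequence of \((T\rho_n)\) has a further subsequence converging weakly to \(T\rho\). Indeed, if \(\langle \ell, T\rho_n\rangle\not\to\langle \ell,T\rho\rangle\) for some \(\ell\in\calY^*\), we could extract a subsequence staying a fixed distance away, and that subsequence would then contradict the claim.

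Fix a subsequence, still denoted \((\rho_n)\). Since \(\dom T\) is weakly sequentially compact in \(\hat{\calB}\), there is a further subsequence \(\rho_{n_k}\) and some \(\hat\rho\in\dom T\) with \(\rho_{n_k}\rightharpoonup\hat\rho\) weakly in \(\hat{\calB}\).

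The key step is to identify \(\hat\rho=\rho\). Let \(J:\hat{\calB}\to\BopI\) be the continuous linear embedding. A continuous linear map is weak-to-weak continuous, so \(J\rho_{n_k}\rightharpoonup J\hat\rho\) weakly in \(\BopI\). Weak convergence in \(\BopI\) implies weak-\(*\) convergence, since \(\Kop\) embeds into \(\BopI^*=\Bop\) via \(\sigma\mapsto\tr(\sigma\,\cdot)\). Weak-\(*\) limits are unique because \(\Kop\) separates points of \(\BopI\); for instance, rank-one operators recover all matrix entries. Since \(\rho_{n_k}\to\rho\) weak-\(*\) as well, we conclude \(\hat\rho=\rho\). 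If \(\dom T\) is understood as a subset of \(\BopI\) on which \(T\) acts through \(J\), this identification is exactly what is needed. This is the one place where care is required: the continuity of the embedding together with the Hausdorff property of the weak-\(*\) topology. I expect it to be the main, though mild, obstacle, because one must check that the weak topology of \(\BopI\) is finer than the weak-\(*\) topology.

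Finally, \(T\) is linear and norm-continuous from \(\hat{\calB}\) to \(\calY\), so it is weak-to-weak continuous. Hence \(T\rho_{n_k}\rightharpoonup T\hat\rho=T\rho\) weakly in \(\calY\). The subsequence principle from the first step then gives convergence of the whole sequence, which proves that \(T\) is weak-\(*\) to weak sequentially continuous from \(\BopI\) to \(\calY\).
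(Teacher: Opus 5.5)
Your proposal is correct and follows essentially the same argument as the paper. Both extract a weakly convergent subsequence in $\hat{\calB}$ from the weak sequential compactness of $\dom T$, pass to weak (hence weak-$*$) convergence in $\BopI$ through the continuous embedding, identify the limit with $\rho$ by uniqueness of weak-$*$ limits, and conclude via the subsequence principle and the weak-to-weak continuity of $T$.

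The only difference is cosmetic. The paper applies the subsequence principle in $\hat{\calB}$, showing the whole sequence converges weakly there, and then applies $T$; you apply it directly to $(T\rho_n)$ in $\calY$. Your extra assumption $\rho\in\dom T$ is unnecessary, since it follows from $\hat\rho=\rho$.
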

\begin{proof}
    Consider a sequence $(\rho^{(k)})$ in $\dom T$ converging to $\rho$ with respect to the weak-$*$-topology 
    of $\BopI$ induced by $\Kop$. By weak compactness of $\dom T$, 
     $(\rho^{(k)})$ has a subsequence converging weakly to some $\tilde{\rho}$ in $\hat{\calB}$.  
     Since the embedding $\hat{\calB}\hookrightarrow \BopI$ is also weakly continuous, the subsequence also 
     converges weakly and weak-$*$ to $\tilde{\rho}$ in $\BopI$. Hence $\tilde{\rho}=\rho$, and 
     since every subsequence has a subsequence converging weakly to $\rho$ in $\hat{\calB}$, the whole 
     sequence converges weakly to $\rho$ in $\hat{\calB}$. As strong continuity of $T$ implies weak continuity 
     from $\hat{\calB}$ to $\calY$, it follows that $T\rho^{(k)}\rightharpoonup T\rho$ in $\calY$. 
\end{proof}
Note that in this case it is possible to choose $\tau_{\calY}$ as the weak topology, rather than 
a weak-$*$-topology, which facilitates the verification of sequential lower semicontinuity of $\calS_{\gobs}$.

\subsection{Main result} 
After these preparations we are able to prove the regularizing property for quantum relative entropy. 

\begin{theorem}\label{the:regularity}
    Assume that \(T\) fulfills the assumption of \Cref{lem:cont_op} and that \(\calS_{\gobs}\) is either the \(2\)-norm or the Kullback-Leibler divergence. Furthermore, assume that \ref{C6} is satisfied and that we have sequences \((\gobs_{k})_{k\in\setN}\) of observed data and \((\alpha_{k})_{k\in\setN}\) of parameters that fulfill the same assumptions as in \Cref{the:regularization_general}.
    Then the solutions to the regularized problem
    \[\rho_{k}:=\underset{\rho}{\argmin}\;\calS_{\gobs_{k}}(T\rho)+\alpha_{k}\QKL(\rho,\rho_{0})\]
    exist, converge to \(\rho^{\dagger}\) in the weak-\(*\)-topology and
    \begin{align*}
        \QKL(\rho_{k},\rho_{0})\overset{k\rightarrow\infty}{\rightarrow}\QKL(\rho^{\dagger},\rho_{0})\text{ and }
        \lim_{k\rightarrow\infty}\calS_{\gdag}(T\rho_{k})=0.
    \end{align*}
    If additionally \(\ln\rho^{\dagger}-\ln\rho_{0}\) is well defined and compact, it holds that
    \begin{align*}
        \lim_{k\rightarrow\infty}\QKL(\rho_{k},\rho^{\dagger})=0\text{ and }
        \lim_{k\rightarrow\infty}\|\rho_{k}-\rho^{\dagger}\|_{\BopI}=0.
    \end{align*}
\end{theorem}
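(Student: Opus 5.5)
The proof is a matter of checking the hypotheses of \Cref{the:regularization_general} with the choices $\calX=\tilde{\BopI}$ carrying the weak-$*$-topology, $\calY=L^{2}(I,\ell^{p}(\setZ))$ carrying its weak-$*$-topology, and $\calR=\QKL(\cdot,\rho_{0})$. The theorem then yields the statement; only the final trace-norm claim needs one extra step.

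The conditions are verified in order as follows.
\begin{itemize}
\item[\ref{C1}] This holds by \Cref{lem:spaces} and the remark after it; $\calY$ is likewise a dual space with the weak-$*$-topology, hence locally convex.
\item[\ref{C2}] This is \Cref{lem:QKL_semicont}.
\item[\ref{C3}] This is \Cref{the:QKL_compact}. Since \Cref{the:regularization_general} only needs sequential arguments, sequential compactness is enough; I will note this explicitly, because the condition is phrased as compactness.
\item[\ref{C4}] We have $\rho_{0}\in\dom T\cap\dom\calR$ because $\QKL(\rho_{0},\rho_{0})=0$. Sequential continuity is \Cref{lem:cont_op}.
\item[\ref{C5}] This is \Cref{cor:funcs_cont}. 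The convergence $\calS_{\gobs_k}(\gdag)\to0$ is part of the assumption that the data and parameters behave as in \Cref{the:regularization_general}.
\item[\ref{C6}] This is assumed.
\end{itemize}
\Cref{the:regularization_general} then gives existence of $\rho_{k}$, $\rho_{k}\to\rho^{\dagger}$ weak-$*$, $\QKL(\rho_{k},\rho_{0})\to\QKL(\rho^{\dagger},\rho_{0})$, and $\calS_{\gdag}(T\rho_{k})\to0$.

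For the second part, take $\sigma=\ln\rho^{\dagger}-\ln\rho_{0}$. By \Cref{lem:QKL_subdiff}, $\sigma\in\partial\QKL(\cdot,\rho_{0})(\rho^{\dagger})$. Because $\sigma$ is assumed compact, the remark on \ref{C7} shows that $\sigma$ lies in the weak-$*$ dual $\tilde{\Kop}$, so \ref{C7} holds. The theorem then gives $\Delta^{\sigma}_{\calR}(\rho_{k},\rho^{\dagger})\to0$, and by \Cref{cor:QKL_bregman_divergence} this Bregman divergence equals $\QKL(\rho_{k},\rho^{\dagger})$.

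The trace-norm convergence follows from \Cref{lem:ineq_QKL}:
\[
\|\rho_{k}-\rho^{\dagger}\|_{\BopI}^{2}\leq\Bigl(\tfrac23\|\rho_{k}\|_{\BopI}+\tfrac43\|\rho^{\dagger}\|_{\BopI}\Bigr)\QKL(\rho_{k},\rho^{\dagger}).
\]
The prefactor is uniformly bounded. Indeed, $\QKL(\rho_{k},\rho_{0})$ converges and is therefore bounded by some $C$, so \Cref{lem:QKL_trace_bound} gives $\|\rho_{k}\|_{\BopI}=\tr\rho_{k}\leq C+(\euler-1)\tr\rho_{0}$. Here $\rho_{k}\in\BopIpsd$ because the functional is finite at $\rho_{k}$. (Alternatively, weak-$*$ convergent sequences are norm bounded.)

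The main obstacle I expect is bookkeeping rather than analysis. First, \ref{C3} gives only sequential compactness and \ref{C2} only sequential lower semicontinuity, so I need to confirm that the proof of \Cref{the:regularization_general} uses nothing more than sequential arguments. Second, I need to confirm that the subgradient lies in the correct dual space, which is exactly why compactness of $\ln\rho^{\dagger}-\ln\rho_{0}$ is assumed.
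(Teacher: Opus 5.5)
Your proposal is correct and follows essentially the same route as the paper. You verify \ref{C1}--\ref{C6} from the preceding lemmas, apply \Cref{the:regularization_general}, obtain \ref{C7} from \Cref{lem:QKL_subdiff} together with compactness of $\ln\rho^{\dagger}-\ln\rho_{0}$, identify the Bregman divergence with $\QKL(\rho_k,\rho^{\dagger})$ via \Cref{cor:QKL_bregman_divergence}, and conclude trace-norm convergence from \Cref{lem:ineq_QKL}. The only difference is cosmetic: you bound the prefactor via \Cref{lem:QKL_trace_bound} and the convergence of $\QKL(\rho_k,\rho_0)$, whereas the paper uses norm-boundedness of weak-$*$ convergent sequences, which you also mention as an alternative.
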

\begin{proof}
    The assumption in this theorem together with the previous results in this chapter guarantee that \ref{C1}-\ref{C6} are satisfied and the first three convergence results then follow directly from \Cref{the:regularization_general}. The assumption that \(\ln\rho^{\dagger}-\ln\rho_{0}\) is well-defined and compact additionally gives us \ref{C7} by \Cref{lem:QKL_subdiff}. This property gives us convergence in Bregman divergence which we have established to be \(\QKL\) again in \Cref{cor:QKL_bregman_divergence}. Finally, for the norm convergence we use that \(\rho_{k}\) is weak-\(*\) convergent and therefore norm bounded. By \Cref{lem:ineq_QKL} we then have
    \[\lim_{k\rightarrow\infty}\|\rho_{k}-\rho^{\dagger}\|_{\BopI}^{2}\leq\lim_{k\rightarrow\infty}\left(\frac{2}{3}\|\rho_{k}\|_{\BopI}+\frac{4}{3}\|\rho^{\dagger}\|_{\BopI}\right)\QKL(\rho_{k},\rho^{\dagger})\leq\lim_{k\rightarrow\infty}C\QKL(\rho_{k},\rho^{\dagger})=0. \qedhere
    \]
\end{proof}
\begin{remark}
    The only condition we have not discussed so far is \ref{C6}. It is satisfied for example if \(T\) is injective. Even without it we still at least get all the convergence results for a subsequence of \(\rho_{k}\) to some \(\calR\)-minimizing solution \cite{Poschl:08}.
\end{remark}

%% file: algorithms.tex
In this section we investigate methods to solve the regularized problem. 
To this end, we first define the corresponding discrete problem and then characterize the subdifferential, conjugate functional and proximal operators for the quantum  relative entropy. 
Then the problem can then be solved by standard algorithms from convex optimization.
\subsection{Description of discrete setting}
We discretize the space \(\tilde{\BopI}\) by choosing a suitable finite orthonormal basis \(V:=\{v_{n}\in \calH:1\leq n\leq N\}\) with corresponding orthonormal projection \(P_{V}:\calH\rightarrow \spn V\cong\setC^{N}\). The discrete version of \(\rho\in\tilde{\BopI}\) is then given by \(\hat{\rho}:=P_{V}\rho P_{V}^{*}\)
and can be identified with a Hermitian matrix \(\hat{\rho} \in\Herm(N)\). The actual choice of the basis \(V\) depends on the experimental setup and should be done in a way that the discretization error \(\|\rho^{\dagger}-P_{V}^{*}P_{V}\rho^{\dagger} P_{V}^{*}P_{V}\|_{\BopI}\) is small for the solution. We furthermore assume that \(\hat{\rho}_{0}\) has full rank because \(\QKL(\rho,\rho_{0})\) is infinite for operators with a nullspace that is not included in the nullspace of \(\rho_{0}\). This means that restricting ourselves to a subspace where \(\hat{\rho}_{0}\) has full rank does not reduce our search space.  To apply methods of convex analysis to \(\Herm(N)\) it is advantageous to treat it as a real vector space with inner product given by \(\langle\hat{\sigma},\hat{\rho}\rangle_{\Herm(N)}:=\tr(\hat{\sigma}\hat{\rho})\in\setR\) (cf.\ \Cref{lem:spaces}).

The discretization of the codomain is usually determined by the physical setup which results in an operator 
\(Q:L^{2}(I,\ell^{p}(\setZ))\rightarrow\setR^{M_{\theta}\cdot M_{l}}\) defined by local averaging 
around a finite set of values for \(\theta\) and \(l\).
The discrete forward operator is now given by \(\hat{T}:\Herm(N)\rightarrow\setR^{M_{\theta}\cdot M_{l}}\)
\[\hat{T}(\hat{\rho}):=QT(P_{V}^{*}\hat{\rho}P_{V}).\]
As discussed later for specific applications, it is sometimes useful to use a slightly modified discrete operator instead which mimics properties of the continuous operator in the discrete case. 
In this section we only assume that we have some discrete version of our operator for which we can also compute its adjoint \(\hat{T}^{*}\). Finally, as in the continuous case, we can define \(\widehat{\calS}_{\gobs}:\setR^{M_{\theta}\cdot M_{l}}\rightarrow\setR\cup\{\infty\}\) by
\[\widehat{\calS}_{\gobs}(f):=\sum_{l,\theta}s(\gobs(\theta,l),f(\theta,l))\]
with \(s\) defined as in \Cref{sec:introduction}.
This leads us to the discrete problem
\begin{align}\label{al:prob_disc}
    \hat{\rho}_{\alpha}=\underset{\hat{\rho}\in\Herm(N)}{\argmin}\widehat{\calS}_{\gobs}(\hat{T}\hat{\rho})+\alpha\widehat{\QKL}(\hat{\rho},\hat{\rho}_{0}).
\end{align}

As for the other quantities we use \(\widehat{\QKL}\) to indicate that we work in a finite dimensional space.

\subsection{Convex analysis for the quantum relative entropy on matrix spaces}
We  now investigate further properties of the quantum relative entropy in the finite dimensional setting. Note that all results for the infinite version are also still valid but now we can additionally apply convex analysis on hermitian matrices as developed in \cite{Lewis:96} to exactly compute the subdifferential and the conjugate functional. 
\begin{lemma}\label{lem:QKL_subdiff_disc}
    For \(\hat{\rho}\) in the essential domain of \(\widehat{\QKL}(\cdot,\hat{\rho}_{0})\) the subdifferential is given by
    \begin{align*}
        \partial \left(\widehat{\QKL}(\cdot,\hat{\rho}_{0})\right)(\hat{\rho})=\begin{cases}
            \{\ln \hat{\rho} -\ln\hat{\rho}_{0}\} \quad\text{if } \ker\hat{\rho}=\{0\}\\
            \emptyset \quad\text{else}
        \end{cases}
    \end{align*}
\end{lemma}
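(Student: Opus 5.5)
Since $\hat\rho_0$ has full rank, $\ln\hat\rho_0\in\Herm(N)$ is well-defined. On $\Herm(N)$ with $\langle\hat\sigma,\hat\rho\rangle=\tr(\hat\sigma\hat\rho)$ we can then write
\[
\widehat{\QKL}(\hat\rho,\hat\rho_0)=F(\hat\rho)-\tr(\hat\rho\ln\hat\rho_0)+\tr(\hat\rho_0),
\qquad F(\hat\rho):=\tr(\hat\rho\ln\hat\rho-\hat\rho)+\iota_{\succeq0}(\hat\rho),
\]
where $\iota_{\succeq0}$ is the indicator of the positive semidefinite cone. The last two terms are affine with gradient $-\ln\hat\rho_0$, so by the sum rule (the affine part is finite and differentiable everywhere) we get $\partial\widehat{\QKL}(\cdot,\hat\rho_0)(\hat\rho)=\partial F(\hat\rho)-\ln\hat\rho_0$. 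It therefore suffices to prove that $\partial F(\hat\rho)=\{\ln\hat\rho\}$ when $\hat\rho$ is positive definite, and $\partial F(\hat\rho)=\emptyset$ when $\hat\rho\succeq0$ is singular.

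$F$ is a spectral function: $F=f\circ\lambda$, where $\lambda$ gives the eigenvalue vector and $f(x)=\sum_i\phi(x_i)$ with $\phi(t)=t\ln t-t$ for $t\ge0$ (and $\phi(0)=0$) and $\phi(t)=\infty$ for $t<0$. The function $f$ is convex, lower semicontinuous and permutation invariant. By Lewis' characterization \cite{Lewis:96}, $Y\in\partial F(X)$ holds iff $\lambda(Y)\in\partial f(\lambda(X))$ and $X,Y$ admit a simultaneous ordered spectral decomposition $X=U\operatorname{diag}(\lambda(X))U^*$, $Y=U\operatorname{diag}(\lambda(Y))U^*$. Because $f$ is separable, $\partial f(x)=\prod_i\partial\phi(x_i)$. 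For $t>0$ the derivative gives $\partial\phi(t)=\{\ln t\}$. At $t=0$ we have $\partial\phi(0)=\emptyset$, since a subgradient $s$ would need $t\ln t-t\ge st$ for all small $t>0$, i.e.\ $\ln t-1\ge s$, which fails as $t\to0$.

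Consequently, if $\ker\hat\rho\neq\{0\}$, some eigenvalue vanishes, so $\partial f(\lambda(\hat\rho))=\emptyset$ and hence $\partial F(\hat\rho)=\emptyset$. If $\hat\rho\succ0$, then $\partial f(\lambda(\hat\rho))=\{(\ln\lambda_i)_i\}$, and the simultaneous diagonalization forces $Y=U\operatorname{diag}(\ln\lambda_i)U^*=\ln\hat\rho$. A subtlety arises with repeated eigenvalues, where $U$ is not unique, but $\ln\hat\rho$ does not depend on the choice of $U$, so this causes no problem. Existence of the subgradient in this case is also confirmed independently by \Cref{lem:QKL_subdiff}.

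The main obstacle is applying Lewis' theorem correctly to the real space $\Herm(N)$ with the trace inner product rather than to real symmetric matrices, and handling $\phi$ extended by $+\infty$. If quoting it directly is problematic, I would fall back on a direct argument. The inclusion $\ln\hat\rho\in\partial$ follows from \Cref{lem:QKL_subdiff}. Uniqueness for $\hat\rho\succ0$ follows because $F$ is differentiable on the open set of positive definite matrices, with gradient $\ln\hat\rho$ given by the standard trace derivative formula $\frac{d}{d\epsilon}\tr\phi(\hat\rho+\epsilon H)=\tr(\phi'(\hat\rho)H)$, and a convex function differentiable at an interior point has exactly one subgradient there. For singular $\hat\rho$, take a unit vector $v\in\ker\hat\rho$, assume $Y\in\partial F(\hat\rho)$, and test with $\hat\sigma=\hat\rho+tP_v$. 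Since $v\perp\ran\hat\rho$, this gives $F(\hat\sigma)-F(\hat\rho)=t\ln t-t\ge t\langle v,Yv\rangle$, and dividing by $t$ and letting $t\to0^+$ yields a contradiction.
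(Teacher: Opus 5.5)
Your proof is correct and follows essentially the paper's route: split off the affine part $-\tr(\cdot\ln\hat\rho_0)+\tr\hat\rho_0$, use Lewis' spectral-function theory to get $\{\ln\hat\rho\}$ in the positive definite case, and rule out subgradients at singular $\hat\rho$ by testing with $\hat\rho+tP_v$, $v\in\ker\hat\rho$. That last step is exactly the paper's argument and coincides with your fallback; the only variation is that your primary route handles the singular case inside Lewis' characterization via $\partial\phi(0)=\emptyset$ for the extended-valued $\phi$, which is a valid and slightly more uniform alternative.
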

\begin{proof}
We first assume that \(\hat{\rho}\) has eigenvalues contained in \((0,\infty)\) so we can compute \(\ln\hat{\rho}\) and \(\ln\hat{\rho}_{0}\). 
Furthermore, the function \(f:(0,\infty)\rightarrow\setR,t\mapsto t\ln t-t\) is convex, and the function \(\tilde{f}:(0,\infty)^{N}\rightarrow\setR,(t_{1},...,t_{N})\mapsto \sum_{j}f(t_{j})\) is convex and does not depend on the order of its arguments. This means that \cite[Theorem 3.2]{Lewis:96} applies, and the corresponding spectral function \(f_{\mathrm{spec}}:\Herm(N)\rightarrow[0,\infty),\hat{\rho}\mapsto\tr(\hat{\rho}\ln\hat{\rho}-\hat{\rho})\) has the subdifferential
\[\partial f_{\mathrm{spec}}(\hat{\rho})=\{f'(\hat{\rho})\}=\{\ln\hat{\rho}\}.\]
To get the subdifferential of \(\widehat{\QKL}\) we use that \(-\tr(\cdot\ln\hat{\rho}_{0})\) is linear and \(\tr(\hat{\rho}_{0})\) is constant. We can then add the corresponding subdifferentials to get
\[\partial \left(\widehat{\QKL}(\cdot,\hat{\rho}_{0})\right)(\hat{\rho})=\partial f_{\mathrm{spec}}(\hat{\rho})+\{-\ln\hat{\rho}_{0}\}+\{0\}=\{\ln\hat{\rho}-\ln\hat{\rho}_{0}\}.\]
Now we just have to show that the subdifferential is empty if the kernel of \(\hat{\rho}\) is nontrivial. Let \(P\) be a projection onto a one dimensional subspace of the nullspace of \(\hat{\rho}\). In contradiction to the statement of the lemma, we now assume that there exists \(\hat{\sigma}\) such that \(-I+\ln\hat{\rho}_{0}+\hat{\sigma}\in\partial (\widehat{\QKL}(\cdot,\hat{\rho}_{0}))(\hat{\rho})\). Then by the definition of the subdifferential we have for all \(t>0\) that
\begin{align*}
&&    \widehat{\QKL}(\hat{\rho}+tP,\hat{\rho}_{0})-\widehat{\QKL}(\hat{\rho},\hat{\rho}_{0})
    &\geq\tr((-I+\ln\hat{\rho}_{0}+\hat{\sigma})(\hat{\rho}+tP-\hat{\rho}))\\
&\Leftrightarrow &    \tr(-tP+(\hat{\rho}+tP)\ln(\hat{\rho}+tP)-\hat{\rho}\ln\hat{\rho}+tP\ln\hat{\rho}_{0})&\geq\tr((-I+\ln\hat{\rho}_{0}+\hat{\sigma})tP)\\
&\Leftrightarrow &     \tr((\hat{\rho}+tP)\ln(\hat{\rho}+tP)-\hat{\rho}\ln\hat{\rho})&\geq t\tr(\hat{\sigma}P)\\
&\Leftrightarrow& \ln t&\geq\tr(\hat{\sigma}P).
\end{align*}
As the last inequality holds for all \(t>0\) and the left hand side tends to \(-\infty\) as 
$t\searrow 0$, we have arrived at a contradiction.
\end{proof}
We can now derive the conjugate functional using Young's equality.
\begin{lemma}\label{lem:QKL_conj}
    For a hermitian matrix \(\sigma\), the conjugate functional of \(\widehat{\QKL}(\cdot,\hat{\rho}_{0})\) is given by
    \[\left(\widehat{\QKL}(\cdot,\hat{\rho}_{0})\right)^{*}(\hat{\sigma})=\tr(\exp(\hat{\sigma}+\ln\hat{\rho}_{0})-\hat{\rho}_{0})\]
\end{lemma}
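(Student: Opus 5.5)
We prove the formula by evaluating the supremum $\sup_{\hat\rho}\{\tr(\hat\sigma\hat\rho)-\widehat{\QKL}(\hat\rho,\hat\rho_0)\}$ directly through Young's equality. Set $\hat\rho^\ast:=\exp(\hat\sigma+\ln\hat\rho_0)$. Since $\hat\rho_0$ has full rank, $\ln\hat\rho_0$ is a well-defined hermitian matrix, so $\hat\rho^\ast$ is hermitian with strictly positive eigenvalues. In particular $\ker\hat\rho^\ast=\{0\}$.

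By \Cref{lem:QKL_subdiff_disc}, the subdifferential at this point is
\[\partial\bigl(\widehat{\QKL}(\cdot,\hat\rho_0)\bigr)(\hat\rho^\ast)=\{\ln\hat\rho^\ast-\ln\hat\rho_0\}=\{\hat\sigma\},\]
where $\ln\exp(A)=A$ for hermitian $A$ follows from the functional calculus. By the Fenchel--Young equality, $\hat\sigma\in\partial F(\hat\rho^\ast)$ is equivalent to $F^\ast(\hat\sigma)=\tr(\hat\sigma\hat\rho^\ast)-F(\hat\rho^\ast)$, where $F=\widehat{\QKL}(\cdot,\hat\rho_0)$ and the real inner product is $\tr(\cdot\,\cdot)$ on $\Herm(N)$. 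One can also see this directly: the subgradient inequality says exactly that $\hat\rho^\ast$ maximizes $\hat\rho\mapsto\tr(\hat\sigma\hat\rho)-F(\hat\rho)$.

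It remains to compute the value. We have
\[\tr(\hat\sigma\hat\rho^\ast)-\tr\bigl(\hat\rho_0-\hat\rho^\ast+\hat\rho^\ast\ln\hat\rho^\ast-\hat\rho^\ast\ln\hat\rho_0\bigr).\]
Substituting $\ln\hat\rho^\ast=\hat\sigma+\ln\hat\rho_0$ gives $\hat\rho^\ast\ln\hat\rho^\ast-\hat\rho^\ast\ln\hat\rho_0=\hat\rho^\ast\hat\sigma$. Using $\tr(\hat\rho^\ast\hat\sigma)=\tr(\hat\sigma\hat\rho^\ast)$, the $\hat\sigma$-terms cancel and we are left with $\tr(\hat\rho^\ast-\hat\rho_0)=\tr(\exp(\hat\sigma+\ln\hat\rho_0)-\hat\rho_0)$, which is the claimed formula.

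The only delicate point is justifying that the subgradient condition yields a global maximizer over all of $\Herm(N)$, including non-psd matrices where $F=\infty$. This is immediate from the definition of the subdifferential, since the subgradient inequality holds for every $\hat\rho$ and is trivial wherever $F(\hat\rho)=\infty$. Convexity of $F$, from \Cref{lem:QKL_basics}, ensures the subdifferential from \Cref{lem:QKL_subdiff_disc} is the convex-analytic one, so no further lower semicontinuity argument is needed.
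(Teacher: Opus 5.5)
Your proposal is correct and follows the paper's proof essentially verbatim: you set $\exp(\hat\sigma+\ln\hat\rho_0)$ as the candidate maximizer, obtain $\{\hat\sigma\}$ as its subdifferential from \Cref{lem:QKL_subdiff_disc}, and evaluate Young's equality to get $\tr(\exp(\hat\sigma+\ln\hat\rho_0)-\hat\rho_0)$. Your added remark that the subgradient inequality makes this point a global maximizer over all of $\Herm(N)$ is a welcome clarification but not a departure from the paper's argument.
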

\begin{proof}
    Take \(\hat{\sigma}\) hermitian and define \(\hat{\nu}:=\exp(\hat{\sigma}+\ln\hat{\rho}_{0})\). 
    Then \(\hat{\nu}\) has full rank, and
    \begin{align*}
        \partial \left(\widehat{\QKL}(\cdot,\hat{\rho}_{0})\right)(\hat{\nu})=\{\ln\hat{\nu}-\ln\hat{\rho_{0}}\}=\{\hat{\sigma}+\ln\hat{\rho}_{0}-\ln\hat{\rho_{0}}\}=\{\hat{\sigma}\}.
    \end{align*}
    Young's equality now yields
    \begin{align*}
        \left(\widehat{\QKL}(\cdot,\hat{\rho}_{0})\right)^{*}(\hat{\sigma})
        &=\tr\left(\hat{\sigma}\hat{\nu}\right)-\widehat{\QKL}(\hat{\nu},\hat{\rho}_{0})\\
        &=\tr\left(\hat{\sigma}\hat{\nu}\right)-\tr\left(\hat{\rho}_{0}-\hat{\nu}+\hat{\nu}(\hat{\sigma}+\ln\hat{\rho}_{0}-\ln\hat{\rho}_{0})\right)\\
        &=\tr\left(\hat{\nu}-\hat{\rho}_{0}\right)
        =\tr\left(\exp(\hat{\sigma}+\ln\hat{\rho}_{0})-\hat{\rho}_{0}\right).\qedhere
    \end{align*}
\end{proof}
With the subdifferential we can also compute the proximal mapping 
\[\prox_{\tau\widehat{\QKL}(\cdot,\rho_{0})}(\hat{\sigma}):=\argmin_{\hat{\rho}}\widehat{\QKL}(\hat{\rho},\hat{\rho}_{0})+\frac{1}{2\tau}\|\hat{\rho}-\hat{\sigma}\|_{2}^{2}.\]
It is essentially the same as for the Kullback-Leibler divergence, and its computation requires the inverse of the bijective function \(g:(0,\infty)\rightarrow\setR,t\mapsto \ln t+t\) which can be expressed using the Lambert \(W\) function as \(g^{-1}(t)=W\left(e^{t}\right)\). Using this expression is numerically unstable for large \(t\), so Newton's method should be used instead.

\begin{lemma}\label{lem:QKL_prox}
For \(\hat{\sigma}\) hermitian
    \[\prox_{\tau\widehat{\QKL}(\cdot,\rho_{0})}(\hat{\sigma})=\tau g^{-1}\left(\frac{\hat{\sigma}}{\tau}+\ln\hat{\rho}_{0}-\ln\tau\right)=\tau W\left(\frac{1}{\tau}\exp\left(\frac{\hat{\sigma}}{\tau}+\ln\hat{\rho}_{0}\right)\right).\]
\end{lemma}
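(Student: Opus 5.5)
The minimization defining the proximal operator is strictly convex, so I will characterize its unique minimizer through the first-order optimality condition and then solve that condition by functional calculus. The objective $\hat{\rho}\mapsto\widehat{\QKL}(\hat{\rho},\hat{\rho}_{0})+\frac{1}{2\tau}\|\hat{\rho}-\hat{\sigma}\|_{2}^{2}$ is proper, convex and lower semicontinuous on $\Herm(N)$, and the quadratic term makes it strictly convex and coercive. Hence a unique minimizer $\hat{\rho}^{*}$ exists. It is characterized by
\[0\in\partial\bigl(\widehat{\QKL}(\cdot,\hat{\rho}_{0})\bigr)(\hat{\rho}^{*})+\tfrac{1}{\tau}(\hat{\rho}^{*}-\hat{\sigma}).\]
The sum rule for subdifferentials applies here because the quadratic term is finite and differentiable everywhere.

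By \Cref{lem:QKL_subdiff_disc}, the subdifferential is empty whenever $\hat{\rho}^{*}$ has a nontrivial kernel. The minimizer must therefore be positive definite, and the optimality condition becomes the matrix equation
\[\ln\hat{\rho}^{*}-\ln\hat{\rho}_{0}+\tfrac{1}{\tau}(\hat{\rho}^{*}-\hat{\sigma})=0.\]

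Rather than solve this equation directly, I will verify the proposed candidate. Define the Hermitian matrix $\hat{A}:=\frac{\hat{\sigma}}{\tau}+\ln\hat{\rho}_{0}-\ln\tau$ and set $\hat{\rho}^{*}:=\tau g^{-1}(\hat{A})$ via the spectral functional calculus. This is well-defined because $g^{-1}:\setR\to(0,\infty)$. The resulting matrix is positive definite and shares an eigenbasis with $\hat{A}$. Applying $g$ eigenvalue-wise gives $\ln(\hat{\rho}^{*}/\tau)+\hat{\rho}^{*}/\tau=\hat{A}$, which is legitimate since all these matrices are functions of $\hat{A}$ and hence commute. Expanding $\ln(\hat{\rho}^{*}/\tau)=\ln\hat{\rho}^{*}-(\ln\tau) I$ and substituting $\hat{A}$ yields
\[\ln\hat{\rho}^{*}+\tfrac{1}{\tau}\hat{\rho}^{*}=\tfrac{\hat{\sigma}}{\tau}+\ln\hat{\rho}_{0},\]
which is exactly the optimality condition. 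By uniqueness, $\hat{\rho}^{*}$ is the proximal point.

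For the Lambert-$W$ form I use $g^{-1}(t)=W(\euler^{t})$ eigenvalue-wise. This gives $\tau W(\exp(\hat{A}))$, and $\exp(\hat{A})=\frac{1}{\tau}\exp\bigl(\frac{\hat{\sigma}}{\tau}+\ln\hat{\rho}_{0}\bigr)$ holds because the scalar shift $-(\ln\tau) I$ commutes with everything.

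The main point needing care is that $\hat{\sigma}$ and $\ln\hat{\rho}_{0}$ do not commute. The verification therefore has to be done only through functional calculus of the single matrix $\hat{A}$, and must never split $\exp(\hat{\sigma}/\tau+\ln\hat{\rho}_{0})$ into a product of exponentials. Done this way, the noncommutativity never causes a problem.
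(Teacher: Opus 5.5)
Your proposal is correct and follows essentially the same route as the paper. Both arguments check that the candidate $\tau g^{-1}(\hat{A})$, with $\hat{A}=\frac{\hat{\sigma}}{\tau}+\ln\hat{\rho}_{0}-\ln\tau$, is positive definite and satisfies the optimality inclusion $\hat{\sigma}-\hat{\rho}\in\tau\,\partial\bigl(\widehat{\QKL}(\cdot,\hat{\rho}_{0})\bigr)(\hat{\rho})$ via \Cref{lem:QKL_subdiff_disc} and the identity $g(g^{-1}(\hat{A}))=\hat{A}$ in the functional calculus of $\hat{A}$; your preliminary existence, uniqueness and kernel arguments are harmless but unnecessary, since verifying this inclusion for the candidate already identifies the proximal point.
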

\begin{proof}
    The proximal mapping can alternatively be characterized by
    \[\prox_{\tau\widehat{\QKL}(\cdot,\rho_{0})}(\hat{\sigma})=\hat{\rho}\Leftrightarrow \hat{\sigma}-\hat{\rho}\in\tau\partial\left(\widehat{\QKL}(\cdot,\rho_{0})\right)(\hat{\rho}).\]
    Because the image of \(g^{-1}\) is positive \(\tau g^{-1}(\tau^{-1}\hat{\sigma}+\ln\hat{\rho}_{0}-\ln\tau)=:\tau g^{-1}(\hat{\omega})\) has full rank and we have
    \begin{align*}
        \tau\partial\left(\widehat{\QKL}(\cdot,\rho_{0})\right)(\tau g^{-1}(\hat{\omega}))&=\tau(\ln(\tau g^{-1}(\hat{\omega}))-\ln\hat{\rho}_{0})\\
        &=\tau(\ln\tau+\ln g^{-1}(\hat{\omega})+g^{-1}(\hat{\omega})-g^{-1}(\hat{\omega})-\ln\hat{\rho}_{0})\\
        &=\tau(\ln\tau+\hat{\omega}-g^{-1}(\hat{\omega})-\ln\hat{\rho}_{0})\\
        &=\tau(\tau^{-1}\hat{\sigma}-g^{-1}(\hat{\omega}))=\hat{\sigma}-\tau g^{-1}(\hat{\omega}).
        \qedhere
    \end{align*}
\end{proof}
With the same approach we can also compute the gradient for the conjugate functional, and we can use Moreau's identity to get an expression for the proximal operator of the conjugate functional.
\begin{lemma}\label{lem:QKL_conj_properties}
For \(\hat{\sigma}\) hermitian
    \[\partial\left(\widehat{\QKL}(\cdot,\hat{\rho}_{0})\right)^{*}(\hat{\sigma})=\{\exp(\hat{\sigma}+\ln\hat{\rho}_{0})\}\]
    \text{ and }
    \[\prox_{\tau(\widehat{\QKL}(\cdot,\rho_{0}))^{*}}(\hat{\sigma})=\hat{\sigma}-W\left(\tau\exp(\hat{\sigma}+\ln\hat{\rho}_{0})\right)=\hat{\sigma}-g^{-1}\left(\hat{\sigma}+\ln\hat{\rho}_{0}+\ln\tau\right).\]
\end{lemma}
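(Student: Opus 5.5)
The plan is to obtain both claims from \Cref{lem:QKL_conj} and \Cref{lem:QKL_subdiff_disc} together with standard finite-dimensional convex analysis on the real inner product space \(\Herm(N)\), with \(\langle\hat{\sigma},\hat{\rho}\rangle=\tr(\hat{\sigma}\hat{\rho})\).

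\emph{Gradient of the conjugate.} Write \(F:=\widehat{\QKL}(\cdot,\hat{\rho}_{0})\); it is proper, convex and lower semicontinuous. By \Cref{lem:QKL_conj}, \(F^{*}(\hat{\sigma})=\tr(\exp(\hat{\sigma}+\ln\hat{\rho}_{0})-\hat{\rho}_{0})\) is finite on all of \(\Herm(N)\). I would use the Fenchel--Young characterization \(\hat{\nu}\in\partial F^{*}(\hat{\sigma})\Leftrightarrow\hat{\sigma}\in\partial F(\hat{\nu})\). By \Cref{lem:QKL_subdiff_disc}, \(\partial F(\hat{\nu})\) is nonempty only if \(\hat{\nu}\) has full rank. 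In that case \(\partial F(\hat{\nu})=\{\ln\hat{\nu}-\ln\hat{\rho}_{0}\}\), so the condition becomes \(\ln\hat{\nu}=\hat{\sigma}+\ln\hat{\rho}_{0}\). Since \(\exp\) and \(\ln\) are mutually inverse bijections between \(\Herm(N)\) and the positive definite matrices, this equation has the unique solution \(\hat{\nu}=\exp(\hat{\sigma}+\ln\hat{\rho}_{0})\). Hence the subdifferential is exactly this singleton.

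Alternatively, one could note that \(\hat{\omega}\mapsto\tr\exp(\hat{\omega})\) is the spectral function of \(\sum_{j}\euler^{t_{j}}\). By \cite[Theorem 3.2]{Lewis:96} its gradient is \(\exp(\hat{\omega})\), and the chain rule with the affine shift \(\hat{\omega}=\hat{\sigma}+\ln\hat{\rho}_{0}\) gives the same result.

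\emph{Prox of the conjugate.} I would apply Moreau's identity in its scaled form,
\[\prox_{\tau F^{*}}(\hat{\sigma})=\hat{\sigma}-\tau\prox_{\tau^{-1}F}(\hat{\sigma}/\tau).\]
Then I insert \Cref{lem:QKL_prox} with step size \(\tau^{-1}\) and argument \(\hat{\sigma}/\tau\):
\[\tau^{-1}g^{-1}\!\left(\frac{\hat{\sigma}/\tau}{1/\tau}+\ln\hat{\rho}_{0}-\ln\tau^{-1}\right)=\tau^{-1}g^{-1}(\hat{\sigma}+\ln\hat{\rho}_{0}+\ln\tau).\]
Multiplying by \(\tau\) and subtracting from \(\hat{\sigma}\) yields \(\hat{\sigma}-g^{-1}(\hat{\sigma}+\ln\hat{\rho}_{0}+\ln\tau)\). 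Finally, \(g^{-1}(t)=W(\euler^{t})\) applied spectrally converts this to the Lambert-\(W\) form
\[\hat{\sigma}-W\bigl(\tau\exp(\hat{\sigma}+\ln\hat{\rho}_{0})\bigr).\]
This uses that \(\exp(\hat{\omega}+\ln\tau)=\tau\exp(\hat{\omega})\), because \(\ln\tau\,I\) commutes with everything.

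\emph{Main obstacle.} The scaling in Moreau's identity is the step most likely to produce errors. The exact factors \(\tau\) versus \(\tau^{-1}\) are where I would double-check. As a safeguard, I would verify the final formula directly: set \(\hat{\pi}\) equal to the claimed prox and check the optimality condition \(\hat{\sigma}-\hat{\pi}\in\tau\,\partial F^{*}(\hat{\pi})\) using the gradient from the first part. With \(\hat{\pi}=\hat{\sigma}-\hat{x}\), where \(\hat{x}=g^{-1}(\hat{\omega})\), this reduces to \(\hat{x}=\tau\exp(\hat{\sigma}-\hat{x}+\ln\hat{\rho}_{0})\). Taking logarithms gives \(\ln\hat{x}+\hat{x}=\hat{\sigma}+\ln\hat{\rho}_{0}+\ln\tau=\hat{\omega}\), which is the defining equation of \(g^{-1}\).

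Taking logarithms here requires care, because \(\hat{x}\) and \(\hat{\sigma}+\ln\hat{\rho}_{0}\) must commute. They do, since \(\hat{x}\) is a function of \(\hat{\omega}\), which is itself a shift of \(\hat{\sigma}+\ln\hat{\rho}_{0}\) by a multiple of the identity.
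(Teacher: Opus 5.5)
Your proposal is correct and follows the paper's proof. The prox formula is obtained exactly as in the paper, from the scaled Moreau identity $\prox_{\tau F^{*}}(\hat{\sigma})=\hat{\sigma}-\tau\prox_{\tau^{-1}F}(\hat{\sigma}/\tau)$ combined with \Cref{lem:QKL_prox}, and your alternative gradient argument (viewing $\tr\exp$ as a spectral function via \cite{Lewis:96}, then applying the affine shift) is precisely the paper's argument.

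Your primary gradient route via $\partial F^{*}=(\partial F)^{-1}$ and \Cref{lem:QKL_subdiff_disc} is a valid variant that reuses the already computed subdifferential; there, the emptiness of $\partial F(\hat{\nu})$ at singular $\hat{\nu}$ is what gives uniqueness. Your direct check of the prox optimality condition is a sound extra safeguard. The commutation caveat in that check is harmless but unnecessary, since $\ln(\tau\exp(A))=\ln\tau\, I+A$ holds for every hermitian $A$.
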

\begin{proof}
The conjugate functional is just a shifted version of \(\tr(\exp(\cdot))\) which by the previously mentioned theory has the subdifferential \(\{\exp(\cdot)\}\). Inserting the shift, we get the claimed formula. For the proximal operator of the conjugate functional we can use Moreau's identity and get
    \begin{align*}
        \prox_{\tau(\widehat{\QKL}(\cdot,\rho_{0}))^{*}}(\hat{\sigma})&=\hat{\sigma}-\tau\prox_{\tau^{-1}\widehat{\QKL}(\cdot,\rho_{0})}(\tau^{-1}\hat{\sigma})\\
        &=\hat{\sigma}-\tau \tau^{-1}W\left(\tau\exp\left(\tau\frac{\hat{\sigma}}{\tau}+\ln\hat{\rho}_{0}\right)\right)\\
        &=\hat{\sigma}-W\left(\tau\exp(\hat{\sigma}+\ln\hat{\rho}_{0})\right).\qedhere
    \end{align*}
\end{proof}
Now that we have computed theses quantities for the quantum relative entropy, we can use standard algorithms from convex optimization to solve the regularized problem. The main downside of this approach is that many calculations require the computation of an eigenvalue decomposition to get the matrix logarithm or apply the function \(g^{-1}\) on the spectrum. 
Fortunately, the desired density matrices are often not very large, so these calculations are still feasible.

Many iterative algorithms can be accelerated significantly if one of the functionals that is minimized is strictly convex with some known convexity parameter \(\mu\). While \(\widehat{\QKL}\) itself is not strictly convex it is strictly convex if the eigenvalues of the matrix-valued argument are bounded from above. As we usually try to find density matrices, the eigenvalues of which are bounded by $1$, the following lemma can be useful:
\begin{lemma}\label{lem:QKL_conv_param}
    \(\hat{QKL}(\cdot,\hat{\rho}_{0})\) is strictly convex with parameter \(\mu\) on the set
    \[M_{\mu}:=\{\hat{\rho} \in \Herm(N):\text{eigenvalues of }\hat{\rho} \text{ not bigger than }\mu^{-1}\}.\]
\end{lemma}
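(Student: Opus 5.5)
I read ``strictly convex with parameter \(\mu\)'' as strong convexity with modulus \(\mu\) with respect to the Frobenius norm \(\|\cdot\|_{2}\) induced by \(\langle\hat\sigma,\hat\rho\rangle_{\Herm(N)}=\tr(\hat\sigma\hat\rho)\). The goal is then to show that for \(\hat\rho,\hat\sigma\in M_{\mu}\cap\dom\widehat{\QKL}(\cdot,\hat\rho_0)\) and \(\sigma'\in\partial\widehat{\QKL}(\cdot,\hat\rho_0)(\hat\rho)\),
\[\widehat{\QKL}(\hat\sigma,\hat\rho_0)-\widehat{\QKL}(\hat\rho,\hat\rho_0)-\tr(\sigma'(\hat\sigma-\hat\rho))\ge\frac{\mu}{2}\|\hat\sigma-\hat\rho\|_{2}^{2}.\]
Since \(M_\mu\) is convex, this is equivalent to the midpoint/secant form of \(\mu\)-strong convexity on \(M_\mu\). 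The term \(\tr(\hat\rho_0)\) is constant and \(-\tr(\hat\rho\ln\hat\rho_0)\) is linear, so only the spectral part \(f_{\mathrm{spec}}(\hat\rho)=\tr(\hat\rho\ln\hat\rho-\hat\rho)\) needs to be handled.

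\emph{Step 1: reduce to positive definite matrices.} By \Cref{lem:QKL_subdiff_disc} the subdifferential exists only at full-rank \(\hat\rho\), so I first prove the inequality for \(\hat\rho,\hat\sigma\) positive definite in \(M_\mu\). Then I extend the secant form to the closure by continuity of \(t\ln t\) on \([0,\mu^{-1}]\). Along the way I note that the domain intersected with \(M_\mu\) consists of matrices with spectrum in \([0,\mu^{-1}]\).

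\emph{Step 2: compute the Bregman divergence.} For full-rank \(\hat\rho\), \Cref{cor:QKL_bregman_divergence} (or the computation in \Cref{lem:QKL_subdiff}) says the left-hand side equals \(\widehat{\QKL}(\hat\sigma,\hat\rho)\). It therefore suffices to prove
\[\widehat{\QKL}(\hat\sigma,\hat\rho)\ge\frac{\mu}{2}\|\hat\sigma-\hat\rho\|_2^2\qquad\text{whenever the spectra of }\hat\rho,\hat\sigma\text{ lie in }(0,\mu^{-1}].\]

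\emph{Step 3: the main obstacle is the non-commutativity of \(\hat\rho\) and \(\hat\sigma\).} I see two possible routes.

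The first is a second-order argument along the segment \(\gamma(t)=\hat\rho+t(\hat\sigma-\hat\rho)\), which stays in \(M_\mu\) and in the positive definite cone. Write \(\phi(t)=f_{\mathrm{spec}}(\gamma(t))\) and \(H=\hat\sigma-\hat\rho\). Work in the eigenbasis of \(\gamma(t)\), with eigenvalues \(\lambda_i\le\mu^{-1}\). The Daleckii--Krein formula gives
\[\phi''(t)=\sum_{i,j}\ln^{[1]}(\lambda_i,\lambda_j)\,|H_{ij}|^2,\]
where \(\ln^{[1]}(a,b)=\frac{\ln a-\ln b}{a-b}\) (and \(1/a\) when \(a=b\)). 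By the mean value theorem, \(\ln^{[1]}(a,b)=1/\xi\) for some \(\xi\) between \(a\) and \(b\), hence \(\ln^{[1]}(a,b)\ge\mu\). This yields \(\phi''(t)\ge\mu\|H\|_2^2\). Integrating Taylor's formula with integral remainder then gives exactly the Bregman bound.

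The second route, a fallback, uses the monotonicity in \Cref{lem:QKL_basics} with a pinching, in the style of \Cref{lem:QKL_trace_bound}. I would pinch with respect to the eigenprojections of \(\hat\rho\), use Jensen's trace inequality, and reduce to the scalar inequality \(y-x+x\ln(x/y)\ge\frac{\mu}{2}(x-y)^2\) for \(x,y\le\mu^{-1}\). However, pinching loses the off-diagonal part of \(\|\hat\sigma-\hat\rho\|_2^2\), so I would prefer the Daleckii--Krein computation. The delicate point there is justifying the second-derivative formula for the trace functional of \(t\ln t\), which is standard for smooth \(f\) on an open interval containing the spectrum.
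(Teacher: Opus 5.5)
Your argument is correct, but it takes a different and longer route than the paper. The paper computes neither a Bregman divergence nor a Hessian. It observes that the quadratic $\frac{\mu}{2}\|\hat\rho\|_2^2=\frac{\mu}{2}\tr(\hat\rho^2)$ is itself a spectral function. After discarding constant and linear terms, convexity of $\widehat{\QKL}(\cdot,\hat\rho_0)-\frac{\mu}{2}\|\cdot\|_2^2$ on $M_\mu$ is then convexity of the trace function $\hat\rho\mapsto\tr g(\hat\rho)$ with $g(x)=x\ln x-\frac{\mu}{2}x^2$. By Davis' theorem \cite{Davis:57} this reduces to convexity of the scalar $g$ on $[0,\mu^{-1}]$, i.e.\ $g''(x)=1/x-\mu\ge 0$.

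That handles the non-commutativity you single out as the main obstacle in one line, since it is absorbed into the general fact that $X\mapsto\tr g(X)$ inherits convexity from $g$. It also works directly on the closed set, singular matrices included, so it needs neither the Daleckii--Krein formula nor your approximation step.

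Your Route~1 is in effect a hands-on proof of the relevant instance of that fact. The divided-difference bound $\ln^{[1]}(a,b)\ge\mu$ gives $\phi''(t)\ge\mu\|H\|_2^2$, which is exactly nonnegativity of the second derivative of $\widehat{\QKL}-\frac{\mu}{2}\|\cdot\|_2^2$ along segments. What it buys you is a self-contained, explicitly quantitative argument. As a by-product it gives the Frobenius-norm inequality $\widehat{\QKL}(\hat\sigma,\hat\rho)\ge\frac{\mu}{2}\|\hat\sigma-\hat\rho\|_2^2$ for spectra in $(0,\mu^{-1}]$, a complement to \Cref{lem:ineq_QKL}.

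The detour through \Cref{cor:QKL_bregman_divergence} in Step~2 is not needed for the convexity claim, because Route~1 already yields the Bregman bound directly. You are right to discard the pinching fallback: it only controls $\sum_i(\hat\sigma_{ii}-\lambda_i)^2$ in the eigenbasis of $\hat\rho$, not the full Frobenius norm.
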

\begin{proof}
    We have to show that \(\hat{\rho}\mapsto\hat{QKL}(\hat{\rho},\hat{\rho}_{0})-\frac{\mu}{2}\tr(\hat{\rho}^{2})\) is convex on the claimed set. For this we can ignore the terms that are constant or linear in \(\hat{\rho}\). Hence, it suffices to show that \(\hat{\rho}\mapsto\tr(\hat{\rho}\ln\hat{\rho}-\frac{\mu}{2}\hat{\rho}^{2})\) is convex on \(M_{\mu}\). 
    As this function only depends on the eigenvalues of \(\hat{\rho}\) and is independent of their order, this is the case if and only if the corresponding real-valued function \(x\mapsto x\ln x-\frac{\mu}{2}x^{2}\) is convex (see \cite{Davis:57}). Computing the second derivative shows that this is the case for \(x\leq\frac{1}{\mu}\).
\end{proof}

\subsection{Iterative solution algorithms}
We will now present some convex optimization algorithms that are able to efficiently and reliably compute the solution of the discrete regularized problem \eqref{al:prob_disc}. 
We begin with the simpler case where the squared \(L^{2}\)-norm is used as a data fidelity term. 
In this case we can use a version of FISTA proposed in \cite{Beck:09}, which had already been proposed for this problem without regularization in \cite{Bolduc:17}. 
As usual for forward backward splitting algorithms, we alternate between a gradient step for the data fidelity functional and an application of the proximal operator. During the iteration the step size is changed to accelerate the convergence. As we usually reconstruct density matrices which have eigenvalues not bigger than $1$, we can in practice assume \(\widehat{\QKL}\) to be strictly convex with parameter \(\mu\) and use the faster versions of the corresponding algorithm  which converge exponentially \cite{ChambollePock:16}. In the settings we considered, choosing \(\mu=0.5\), which by \Cref{lem:QKL_conv_param} corresponds to eigenvalues smaller than \(2\), works well. 

\begin{algorithm}
\caption{Generalized accelerated FISTA}\label{alg:gen_fista}
\begin{algorithmic}
\Require \(\mu > 0;\,\tau\in(0,\|T^{*}T\|^{-1});\,\rho^{(0)};\)
\State\(t_{0} := 0;\,\rho^{(-1)}:= \rho^{(0)};\,q := \frac{\tau\mu}{1+\tau\mu};\)
\For{\(l=0,1,2,...\)}
\State \(t_{l+1}=\frac{1}{2}\left(1-qt_{l}^{2}+\sqrt{(1-qt_{l})^2+4t_{l}^{2}}\right)\)
\State \(\beta_{l}=\frac{t_{l}-1}{t_{l+1}}(1+(1-t_{l+1})\tau\mu)\)
\State \(\tilde{\rho}^{(l)}=\rho^{(l)}+\beta_{l}(\rho^{(l)}-\rho^{(l-1)})\)
\State \(\sigma_{l}=\tilde{\rho}^{(l)}-\tau T^{*}(T\tilde{\rho}^{(l)}-\gobs)\)\Comment{Gradient step}
\State \(\rho^{(l+1)}=\prox_{\alpha\tau\QKL(\cdot,\rho_{0})}(\sigma_{l})\)\Comment{Proximal step}
\EndFor
\end{algorithmic}
\end{algorithm}

If we use the Kullback-Leibler divergence, forward backward splitting algorithms often become numerically unstable because taking a gradient step for the data fidelity functional can be impossible if the iterate is not in the essential domain or the gradient can get extremely large for values close to zero. 
As an alternative, one can use primal dual hybrid gradient methods, like the Chambolle-Pock algorithm \cite{Chambolle:11}. Here we alternate between a proximal step for the conjugate of our data fidelity functional and a proximal step of our penalty functional. By assuming strict convexity, we can again optimize our parameter choices and significantly accelerate the convergence such that it is quadratic \cite{ChambollePock:16}.

\begin{algorithm}
\caption{Accelerated Chambolle-Pock algorithm}\label{alg:gen_cp}
\begin{algorithmic}
\Require \(\mu > 0;\,\alpha > 0;\,\tau_{0}>0;\,\nu_{0}>0;\,\tau_{0}\nu_{0}\|T^{*}T\|\leq1;\,\rho^{(0)};\,g^{(0)};\)
\State\(\tilde{\rho}^{(0)}=\rho^{(0)}\)
\For{\(l=0,1,2,...\)}
\State \(\tilde{g}^{(l)}=g^{(l)}+\nu_{l}T\tilde{\rho}^{(l)}\)
\State \(g^{(l+1)}=\frac{1+\tilde{g}^{(l)}}{2}-\sqrt{\left(\frac{1+\tilde{g}^{(l)}}{2}\right)^{2}-\tilde{g}^{(l)}+\nu_{l}\gobs}\)\Comment{Prox of conjugate data fidelity functional}
\State \(\rho^{(l+1)}=\prox_{\tau_{l}\alpha\QKL(\cdot,\rho_{0})}(\rho^{(l)}-\tau_{l}T^{*}g^{l+1})\)\Comment{Proximal step for penalty functional}
\State \(\beta_{l}=(1+2\mu\tau_{l})^{-\frac{1}{2}};\,\tau_{l+1}=\beta_{l}\tau_{l};\,\nu_{l+1}=\frac{\nu_{l}}{\beta_{l}};\)
\State \(\tilde{\rho}^{(l+1)}=\rho^{(l+1)}+\beta_{l}(\rho^{(l+1)}-\rho^{(l)})\)
\EndFor
\end{algorithmic}
\end{algorithm}
So far we have not mentioned when to stop the algorithms. 
One of the simplest of multiple different possibilities is to check the change between consecutive iterates in some norm and to stop when this change becomes smaller than a predefined threshold. 
A more sophisticated approach that additionally provides an estimate of the distance between the current iterate to the exact minimizer, relies on the duality gap. 
\begin{theorem}[Duality gap]\label{the:duality_gap}
    Let \(\hat{\rho}_{\alpha}\) be the solution to problem \eqref{al:prob_disc} with full rank, then for all \(\hat{\rho}\in\dom\hat{T}\) and \(\hat{g}\in\ran\hat{T}\) we have 
    \[
    \widehat{\QKL}(\hat{\rho},\hat{\rho}_{\alpha})\leq\frac{1}{\alpha}\widehat{\calS}_{\gobs}(\hat{T}\hat{\rho})+\widehat{\QKL}(\hat{\rho},\hat{\rho}_{0})+\frac{1}{\alpha}\widehat{\calS}_{\gobs}^{*}(-\alpha \hat{g})+\left(\widehat{\QKL}(\cdot,\hat{\rho}_{0})\right)^{*}(\hat{T}^{*}\hat{g}).\]
\end{theorem}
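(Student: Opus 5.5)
The plan is to combine the standard Fenchel duality gap bound with the fact, established in \Cref{cor:QKL_bregman_divergence}, that the Bregman divergence of $\widehat{\QKL}(\cdot,\hat\rho_0)$ is again the quantum relative entropy. Write $F(\hat\rho):=\frac1\alpha\widehat{\calS}_{\gobs}(\hat T\hat\rho)$ and $R(\hat\rho):=\widehat{\QKL}(\hat\rho,\hat\rho_0)$, so that $\hat\rho_\alpha$ minimizes $J:=F+R$; this is just \eqref{al:prob_disc} divided by $\alpha$. The argument has three steps.

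\emph{Step 1: optimality condition.} Since $\hat\rho_\alpha$ has full rank, \Cref{lem:QKL_subdiff_disc} gives $\partial R(\hat\rho_\alpha)=\{\ln\hat\rho_\alpha-\ln\hat\rho_0\}$. In finite dimensions $R$ is continuous at the interior point $\hat\rho_\alpha$ of its domain, so the sum rule applies. Hence $0\in\partial F(\hat\rho_\alpha)+\ln\hat\rho_\alpha-\ln\hat\rho_0$, that is,
\[-(\ln\hat\rho_\alpha-\ln\hat\rho_0)\in\partial F(\hat\rho_\alpha).\]

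\emph{Step 2: lower bound on the primal gap.} For any $\hat\rho$ we split the gap into an $F$-part and an $R$-part:
\[J(\hat\rho)-J(\hat\rho_\alpha)=\big[F(\hat\rho)-F(\hat\rho_\alpha)+\tr((\ln\hat\rho_\alpha-\ln\hat\rho_0)(\hat\rho-\hat\rho_\alpha))\big]+\big[R(\hat\rho)-R(\hat\rho_\alpha)-\tr((\ln\hat\rho_\alpha-\ln\hat\rho_0)(\hat\rho-\hat\rho_\alpha))\big].\]
The first bracket is nonnegative by the subgradient inequality for $F$ from Step 1. The second bracket equals $\widehat{\QKL}(\hat\rho,\hat\rho_\alpha)$ by \Cref{cor:QKL_bregman_divergence}. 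Therefore
\[\widehat{\QKL}(\hat\rho,\hat\rho_\alpha)\le J(\hat\rho)-J(\hat\rho_\alpha).\]

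\emph{Step 3: weak duality.} It remains to bound $J(\hat\rho_\alpha)$ from below by the negated dual terms. By the Fenchel--Young inequality, applied at $\hat\rho_\alpha$ and any $\hat g$,
\[\widehat{\calS}_{\gobs}(\hat T\hat\rho_\alpha)+\widehat{\calS}_{\gobs}^*(-\alpha\hat g)\ge\langle -\alpha\hat g,\hat T\hat\rho_\alpha\rangle,\qquad R(\hat\rho_\alpha)+R^*(\hat T^*\hat g)\ge\langle\hat T^*\hat g,\hat\rho_\alpha\rangle.\]
Divide the first inequality by $\alpha$ and add the two; the pairings cancel. This yields
\[-J(\hat\rho_\alpha)\le\frac1\alpha\widehat{\calS}_{\gobs}^*(-\alpha\hat g)+R^*(\hat T^*\hat g).\]
Inserting this into Step 2 gives exactly the claimed estimate. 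The conjugate $R^*$ is explicitly available from \Cref{lem:QKL_conj}.

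The main obstacle is Step 1, namely justifying the subdifferential sum rule and the full-rank hypothesis. The point of the full-rank assumption is to make $\partial R(\hat\rho_\alpha)$ nonempty and single-valued, so that the Bregman identity can be used at $\hat\rho_\alpha$. I would invoke the finite-dimensional sum rule, which needs $\operatorname{ri}\dom R\cap\operatorname{ri}\dom F\neq\emptyset$. That condition holds because $\hat\rho_\alpha$ itself is a positive definite point at which $F$ is finite; if $\operatorname{ri}\dom F$ causes trouble for the KL fidelity, I would use the qualification in the form "$R$ continuous at a point of $\dom F$" instead. The remaining steps are direct manipulations.
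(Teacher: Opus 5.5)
Your proof is correct and is essentially the paper's argument (the proof of \Cref{the:gen_duality_gap} in the appendix). Both use the optimality condition to split $J(\hat\rho)-J(\hat\rho_\alpha)$ into two Bregman divergences, drop the nonnegative fidelity part, identify the penalty part with $\widehat{\QKL}(\hat\rho,\hat\rho_\alpha)$ via \Cref{cor:QKL_bregman_divergence}, and bound $-J(\hat\rho_\alpha)$ by weak duality. The one difference is that you take a subgradient of the composite $\frac{1}{\alpha}\widehat{\calS}_{\gobs}\circ\hat{T}$ directly, so you need only the sum rule (justified by continuity of $\widehat{\QKL}(\cdot,\hat\rho_0)$ at the positive definite point $\hat\rho_\alpha$), whereas the paper also invokes the chain rule $\partial J=\partial\calR+\frac{1}{\alpha}T^{*}\partial\calS(T\,\cdot)$.
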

A proof of this statement, which is based on Rockafellar-Fenchel duality \cite{Rockafellar:67,Zeidler:13}, is included in the appendix. The rank assumption is needed to guarantee the existence of subdifferentials in line with \Cref{lem:QKL_subdiff_disc}.

The right hand side of this inequality, the duality gap, can be evaluated without knowing the solution \(\hat{\rho}_{\alpha}\) and can therefore be used to obtain an upper bound on the distance to the current iterate \(\hat{\rho}\). To make this bound as tight as possible, we choose \(\hat{g}\in-\frac{1}{\alpha}\partial\widehat{\calS}_{\gobs}(T\hat{\rho})\). We then stop the iterations if the duality gap and hence the error 
\(\widehat{\QKL}(\hat{\rho},\hat{\rho}_{\alpha})\)
is smaller than a predetermined threshold. The possibility to control the distance to the solution is a unique advantage of this approach compared to other iterative solution methods used in quantum tomography so far.

While both presented algorithms are numerically relatively stable, due to inaccuracies during the eigenvalue decomposition it can still happen that eigenvalues are slightly smaller than zero which leads to problems during the computation of the duality gap. For this reason we add a small offset of about \(10^{-10}\) to the eigenvalues before doing further computations.

%% file: reconstruction.tex
We will now consider two different applications where quantum state reconstruction is performed. There we will show how the abstract theory can be applied and present numerical examples that demonstrate the results of our method in practice. The code for all experiments is available at \cite{Oberender:26qkldata}.

\subsection{Electron tomography by PINEM}
In \cite{Priebe:17}, it is was demonstrated that photon induced near field electron microscopy (PINEM) 
enables the reconstruction of the quantum state of a beam of free electrons by using the interaction of electrons with laser photons of different frequencies, while precisely controlling their relative phase. 
For the reconstruction of the density matrix from the data a new method named SQUIRRELS (Spectral Quantum Interference for the Regularized Reconstruction of free ELectron States) has been proposed, which relies on Hilbert-Schmidt norm regularization and also incorporates positive semi definiteness- and trace constraints. Additionally Bayesian methods, maximum likelihood techniques and neural networks have been proposed and compared in \cite{Jeng:25}.

By using quantum relative entropy regularization instead, further constraints are not necessary, 
and we can incorporate further prior knowledge into our reconstruction by the choice of \(\rho_{0}\). 
We will consider the same experimental setup that has been analyzed in \cite{Shi:20}. 
The forward operator \(T:\tilde{\BopI}\rightarrow L^{2}([-\pi,\pi),\ell^{1}(\setZ))\) in this 
reference is given by
\begin{align*}
    (T\rho)(\theta,l)&=\langle e_{l},U_{\theta}\rho U_{\theta}^{*}e_{l}\rangle
    \quad \text{ with } U_{\theta}e_{l}=\sum_{k\in\setZ}\euler^{\iu (k-l)\theta}J_{k-l}(2|g|)e_{k}
\end{align*}
where $\{e_l\}$ is the standard basis of $\ell^2(\setZ)$. 
The operators \(U_{\theta}\) are unitary, so \Cref{lem:cont_op} applies, 
and the same holds true for other PINEM based forward operators where the operators \(U_{\theta}\) are slightly different. For this particular example, it has already been shown in \cite{Shi:20} that \(T\) is injective. 
This means that \Cref{the:regularity} applies directly and yiedls the regularizing property of quantum relative entropy regularization for this example.

For the numerical experiments we use the same general setup as described in the supplementary information of \cite{Priebe:17} for the SQUIRRELS algorithm. This means that we generate a realistic density matrix \(\rho^{\dagger}\) of size \(41\times41\) based on pump interaction with coupling strength \(g_{\text{pump}}=1.73\) where we also include a slight phase jitter. 
Noise-free data is computed by applying the forward operator with \(g=3g_{\text{pump}}\) for 100 equally spaced \(\theta\), from which Poisson distributed synthetic data is generated. 
To demonstrate the convergence of our method, we use different intensities to get a range of different noise levels \(\delta=\calS_{\gobs}(T\rho^{\dagger})\). 
We then simply choose \(\alpha\) to be \(\alpha_{0}\sqrt{\delta}\) which guarantees that \(\frac{\delta}{\alpha}\) tends to zero, but is most likely not optimal for any particular noise level. 
We use the squared \(L^{2}\)-norm and FISTA or the Kullback-Leibler divergence and the Chambolle-Pock algorithm as described in \Cref{sec:algorithms}. As a reference solution we simply choose a normalized version of the identity matrix. We run the algorithms until the duality gap is smaller than \(10^{-6}\) for FISTA and \(10^{-5}\) for Chambolle-Pock or after a maximum of 2,000,000 iterations is reached. 
Both values are small enough for our purposes as they are much smaller than the final distances to the true solution without noise. The bound for the Chambolle-Pock algorithm was chosen slightly higher to compensate for its longer running time. Only for the last two data points of Chambolle-Pock the cap on the number of iterations became relevant. In \Cref{fig:PINEM_conv} we can see the results and observe that in both cases the data and the reconstructed density matrices converge as claimed in \Cref{the:regularity}.
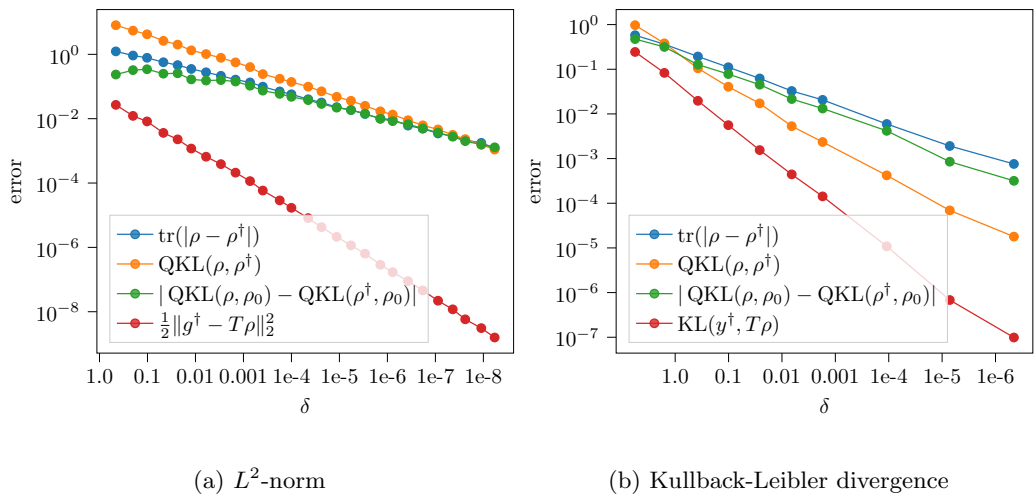
\begin{figure}
\begin{subfigure}[b]{0.45\textwidth}
    \centering
    \input{figures/PINEM_L2_far.tex}
    \caption{\(L^{2}\)-norm}
\end{subfigure}
\begin{subfigure}[b]{0.45\textwidth}
    \centering
    \input{figures/PINEM_KL_far.tex}
    \caption{Kullback-Leibler divergence}
\end{subfigure}
\caption{Convergence behaviour for different data fidelity functionals}
\label{fig:PINEM_conv}
\end{figure}

\subsection{Homodyne tomography}
Homodyne tomography is a well established technique to reconstruct the quantum state of light by using a balanced homodyne detector. It was developed by Vogel and Risken in \cite{Vogel:89} and later demonstrated experimentally in \cite{Smithey:93}. The measured data can be viewed as the Radon transform of the Wigner function, which is related to the density matrix via the Fourier transform. 
Therefore, originally methods from classical tomography for the inversion of the Radon transform, like filtered back-projection, have been used to reconstruct quantum states from experimental data \cite{Leonhardt:95}. 
Another approach for the inversion has been the derivation of pattern functions which can be multiplied with the data and then integrated to directly obtain any element of the density matrix \cite{Richter:00}. To guarantee physically meaningful results, it is also possible to use a fixed point algorithm based on maximum likelihood estimation \cite{Rehacek:07}. Furthermore, methods from convex optimization have recently been used to incorporate the trace and positive semi-definiteness constraints \cite{Strandberg:22}.

We will consider homodyne tomography with perfect detection efficiency 
for which the forward operator \(T\) can be defined by
\begin{align}\label{eq:T_homodyne}
    (T\rho)(\theta,x)=\sum_{m,n\in\setN}\rho_{m,n}\euler^{\iu (n-m)\theta}u_{m}(x)u_{n}(x)
\qquad    \text{with }u_{m}(x)=\frac{1}{\sqrt{\sqrt{\pi}m!2^{m}}}H_{m}(x)\euler^{-\frac{x^2}{2}}.
\end{align}
Here \(H_{m}\) denotes the \(m\)-th Hermite polynomial \cite{Richter:00}. Note that the functions \(u_{m}\euler^{\iu m\theta}\) form an orthonormal basis of \(L^{2}(\setR,\setC)\) and we can define \(U_{\theta}:\ell^{2}(\setN,\setC)\rightarrow L^{2}(\setR,\setC)\), \(e_{n}\mapsto \euler^{\iu n\theta}u_{n}\) with the standard basis 
\(\{e_n:n\in\setN\}\) of \(\ell^{2}(\setN,\setC)\).

Using a Gelfand triple and the Dirac delta function one could then formally write
\begin{align*}
    (T\rho)(\theta,x)=\langle\delta_{x},U_{\theta}\rho U_{\theta}^{*}\delta_{x}\rangle
\end{align*}
to get to a similar structure as we have assumed until now.
Unfortunately, the forward operator does not have the required continuity properties as the 
following lemma shows: 
\begin{lemma}
    The operator \(T\) in \eqref{eq:T_homodyne} is not weak-\(*\) to weak-\(*\) sequentially continuous from 
    $\BopI$ to $L^2$. 
\end{lemma}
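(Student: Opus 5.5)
The plan is to produce a sequence $(\rho_n)\subset\BopIpsd$ with $\rho_n\to\rho$ weak-$*$ in $\BopI$ while $(T\rho_n)$ does \emph{not} converge weakly to $T\rho$ in $L^2(I\times\setR)$. Recall that the weak-$*$ topology on the Hilbert space $L^2$ is its weak topology.

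The cleanest way to guarantee this is to make $\|T\rho_n\|_{L^2}$ blow up. A weakly convergent sequence in $L^2$ is norm bounded by the uniform boundedness principle, so unboundedness forbids any weak limit. Dually, if $T$ were weak-$*$ to weak sequentially continuous, then $T$ would have to be bounded from $\BopI$ to $L^2$. One way to see this: $\rho\mapsto\langle T\rho,a\rangle$ would be sequentially weak-$*$ continuous, hence given by a compact operator (because $\Kop$ is separable), and then Banach--Steinhaus applies. So the proof reduces to exhibiting trace-class states with unit trace whose images have unbounded $L^2$ norm, and then, if necessary, subtracting a fixed $\rho$ to arrange weak-$*$ convergence to $0$.

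The first step is to rewrite $T$ on pure states. For $\rho=\psi\otimes\bar\psi$ with $\psi\in\ell^2(\setN,\setC)$ and $\|\psi\|=1$, set $\psi_\theta:=U_\theta\psi=\sum_n\psi_n\euler^{\iu n\theta}u_n$. This is the fractional Fourier transform of angle $\theta$ of the wave function $\Psi=\sum_n\psi_nu_n\in L^2(\setR)$. By \eqref{eq:T_homodyne},
\begin{align*}
(T\rho)(\theta,x)=|\psi_\theta(x)|^2,\qquad \|T\rho\|_{L^2}^2=\int_I\|\psi_\theta\|_{L^4(\setR)}^4\,\dd\theta .
\end{align*}
Thus the task is to find unit vectors whose fractional Fourier transforms have large $L^4$ norm on a large set of angles. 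The natural heuristic is the Hausdorff--Young-type bound for the fractional Fourier transform, $\|\psi_\theta\|_4\lesssim|\sin(\theta-\theta_0)|^{-1/4}$ for a state well localized at angle $\theta_0$. Since $\int|\sin\theta|^{-1}\dd\theta$ diverges logarithmically, this estimate does not by itself rule out unboundedness. It suggests using chirped, squeezed Gaussians $\Psi_\varepsilon(x)=\varepsilon^{-1/2}\pi^{-1/4}\exp(-x^2/(2\varepsilon^2))$, whose transforms are again Gaussians of width $w_\varepsilon(\theta)^2=\varepsilon^2\cos^2\theta+\varepsilon^{-2}\sin^2\theta$. In that case $\|T\rho\|^2\sim\int_I w_\varepsilon(\theta)^{-1}\dd\theta$, which can be computed in closed form. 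As $\varepsilon\to0$ one has $\Psi_\varepsilon\rightharpoonup0$ in $L^2(\setR)$, so $\rho_\varepsilon\to0$ weak-$*$, since $\langle\Psi_\varepsilon,K\Psi_\varepsilon\rangle\to0$ for compact $K$.

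The main obstacle is this computation. A rough evaluation gives $\int_I w_\varepsilon^{-1}\sim\varepsilon\log(1/\varepsilon)\to0$, so a single squeezed state is \emph{not} enough: it even yields $T\rho_\varepsilon\to0$ strongly. Coherent and displaced states likewise produce bumps that escape or concentrate, and these converge weakly to $0=T(0)$. My plan at this step is therefore to search for near-extremizers of $\int_I\|\psi_\theta\|_4^4\dd\theta$. I would first try superpositions $\Psi=\sum_{k}c_k\Phi_k$ of states $\Phi_k$ that focus (are squeezed) at distinct angles $\theta_k$ and have nearly disjoint phase-space supports, optimizing the weights $c_k$ against the logarithmic divergence of $\int|\sin\theta|^{-1}$.

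If that optimization shows the $L^4$ integral stays bounded, I would switch strategy and look for non-convergence with bounded norms. Concretely, I would take $\rho_n=|\psi+\phi_n\rangle\langle\psi+\phi_n|$ with $\phi_n\rightharpoonup0$, so that $\rho_n\to|\psi\rangle\langle\psi|$ weak-$*$. I would then test against an $a\in L^2(I\times\setR)$ adapted to the interference term $2\,\mathrm{Re}\,\psi_\theta\overline{(\phi_n)_\theta}$, trying to keep $\langle T\rho_n,a\rangle$ away from $\langle T(|\psi\rangle\langle\psi|),a\rangle$. I would fix the construction only after deciding which of these two routes actually succeeds.
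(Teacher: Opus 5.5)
\textbf{There is a genuine gap: the proposal never reaches a proof.} It ends by deferring the choice between two routes, and no sequence is ever exhibited. Moreover, the first route cannot work, because $T$ is bounded from $\BopI$ into $L^2([0,\pi)\times\setR)$. To see this, let $\hat x_\theta:=U_\theta^*M_xU_\theta$ be the rotated quadrature. The computation at the end of the paper's proof, applied to $\euler^{\iu\sigma x}$, gives
$\int_\setR (T\rho)(\theta,x)\euler^{\iu\sigma x}\,\dd x=\tr(\euler^{\iu\sigma\hat x_\theta}\rho)=:\chi_\rho(\sigma\cos\theta,\sigma\sin\theta)$.
This characteristic function satisfies $|\chi_\rho|\leq\|\rho\|_{\BopI}$, since $\euler^{\iu\sigma\hat x_\theta}$ is unitary. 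It also satisfies $\int_{\setR^2}|\chi_\rho|^2\,\dd\xi=2\pi\|\rho\|_{\BopII}^2$ by quantum Plancherel. Plancherel in $x$ together with polar coordinates then gives
\[
\|T\rho\|_{L^2([0,\pi)\times\setR)}^2=\frac{1}{2\pi}\int_{\setR^2}\frac{|\chi_\rho(\xi)|^2}{|\xi|}\,\dd\xi\leq\|\rho\|_{\BopI}^2+\|\rho\|_{\BopII}^2\leq 2\|\rho\|_{\BopI}^2 ,
\]
because $|\xi|^{-1}$ is integrable near $0$ in two dimensions. So no unit-trace states can make $\|T\rho_n\|$ blow up; your squeezed-state computation ($\varepsilon\log(1/\varepsilon)\to 0$) is an instance of this. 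For the same reason, your remark that continuity would force boundedness yields no contradiction.

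\textbf{The missing idea} is the fact you quoted but used only to get boundedness: a sequentially weak-$*$ continuous functional on $\BopI$ must be $\tr(K\,\cdot)$ with $K\in\Kop$. The paper applies this directly at a \emph{single} angle $\theta$. For bounded real $\tilde a\neq 0$ one has $\langle\tilde a,(T\rho)(\theta)\rangle=\tr(U_\theta^*M_{\tilde a}U_\theta\rho)$. The operator $U_\theta^*M_{\tilde a}U_\theta$ is not compact, since $M_{\tilde a}$ is bounded below on the infinite-dimensional space $L^2(\{|\tilde a|>\epsilon\})$. Hence the functional is not weak-$*$ continuous, and by metrizability of bounded sets it is not sequentially weak-$*$ continuous either. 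A concrete witness: identify $\ell^2(\setN,\setC)$ with $L^2(\setR)$ via $U_0$ and take $\phi_n(x)=g(x)\euler^{\iu nx}$ with a normalized Gaussian $g$. Then $\rho_n=\langle\phi_n,\cdot\rangle\phi_n\to 0$ weak-$*$, while $(T\rho_n)(0,\cdot)=g^2$ for all $n$.

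\textbf{Your stated target is false, not just hard.} You aimed for non-convergence in $L^2(I\times\setR)$ for an interval $I\subseteq[0,\pi)$ of positive length, and no route can reach that. Take $a=b\otimes\tilde a$ with $b\in L^2(I)$ and $\mathcal{F}\tilde a\in L^1(\setR)$. Then $T^*a=\int_I b(\theta)\tilde a(\hat x_\theta)\,\dd\theta=\int_{\setR^2}G(\xi)D(\xi)\,\dd\xi$, where $D(\xi):=\euler^{\iu(\xi_1\hat x_0+\xi_2\hat x_{\pi/2})}$ and $\|G\|_{L^1(\setR^2)}\lesssim\|b\|_{L^1}\|\mathcal{F}\tilde a\|_{L^1}$ (the polar Jacobian cancels $|\xi|^{-1}$). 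This operator is compact by the quantum Riemann--Lebesgue lemma: it is Hilbert--Schmidt for $G\in L^1\cap L^2$, and one concludes by density in $L^1$. By the bound above and density of such $a$, $T^*a\in\Kop$ for every $a\in L^2(I\times\setR)$. So $T$ \emph{is} weak-$*$-to-weak sequentially continuous there, which is why all your squeezed and displaced sequences gave $T\rho_n\rightharpoonup 0$. What can be proved, and what the paper's argument actually proves, is the fixed-$\theta$ statement. The paper's assertion that this argument extends easily to an interval of $\theta$'s does not hold.
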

\begin{proof}
    We just show this for a single fixed \(\theta\), but the argument can easily be extended to an interval
    of $\theta$'s. Take a bounded real-valued function
    \(\tilde{a}\in (L^{2}(\setR)\setminus\{0\})\). 
    Then the multiplication operator \(M_{\tilde{a}}:L^{2}(\setR)\rightarrow L^{2}(\setR)\) is bounded and self-adjoint, but not compact as it is not identically \(0\). Then the operator \(U_{\theta}^{*}M_{\tilde{a}}U_{\theta}\) is also not compact.  

    For a linear functional \(F\) on \(\BopI\) weak-\(*\) continuity is equivalent to both weak-\(*\) continuity  on the unit ball and the existence of an element \(K\in\Kop\) such that \(F=\tr(K\cdot)\) \cite[Corollary~4.46]{Fabian:01}. Therefore, the linear functional \(\rho \mapsto \tr(U_{\theta}^{*}M_{\tilde{a}}U_{\theta}\rho)\) cannot be weak-\(*\) continuous and it also cannot be weak-\(*\) continuous on the unit ball. As \(\Kop\) is separable, the unit ball of \(\BopI\) with the weak-\(*\) topology is metrizable and the functional is also not sequentially weak-\(*\) continuous \cite[Proposition~3.24]{Fabian:01}. This implies that there exists a weak-\(*\) convergent sequence \((\rho^{(k)})_{k\in\setN}\subset\BopI\) converging to \(\rho\) such that \(\tr(U_{\theta}^{*}M_{\tilde{a}}U_{\theta}\rho^{(k)})\) does not converge to \(\tr(U_{\theta}^{*}M_{\tilde{a}}U_{\theta}\rho)\). Now we observe
    \begin{align*}
        \tr(U_{\theta}^{*}M_{\tilde{a}}U_{\theta}\rho)&=\sum_{m,n}\langle e_{m},U_{\theta}^{*}M_{\tilde{a}}U_{\theta}e_{n}\rangle\langle e_{m},\rho e_{n}\rangle=\sum_{m,n}\langle \euler^{\iu m\theta}u_{m} ,M_{\tilde{a}}\euler^{\iu n\theta}u_{n}\rangle\rho_{m,n}\\
        &=\sum_{m,n}\int_{\setR}\tilde{a}(x) \euler^{-\iu m\theta}u_{m}(x)\euler^{\iu n\theta}u_{n}(x)\dd x\rho_{m,n}\\
        &=\int_{\setR}\tilde{a}(x) \sum_{m,n}\rho_{m,n}\euler^{-\iu (n-m)\theta}u_{m}(x)u_{n}(x)\dd x=\langle\tilde{a},(T\rho)(\theta)\rangle_{L^{2}(\setR)}.
    \end{align*}
    So \((T\rho^{(k)})(\theta)\) does not converge weak-\(*\) to \((T\rho)(\theta)\).
\end{proof}
\begin{remark}
    This problem also cannot be fixed by considering a semi-discrete version and switching to a sequence space by defining \(T_{\mathrm{semi}}:\tilde{\BopI}\rightarrow L^{2}([0,\pi),\ell^{1}(\setZ))\)
    \begin{align*}
        (T_{\mathrm{semi}}\rho)(\theta,l):=\int_{lh}^{(l+1)h}(T\rho)(\theta,x)\dd x
    \end{align*}
    for some $h>0$.
    The proof for this is essentially the same, but now instead of \(\tilde{a}\in L^{2}(\setR)\) one considers for a sequence \(a\in c_{0}(\setZ)\) the function \(\tilde{a}(x):=\sum_{l}a_{l}1_{[lh,(l+1)h)}(x)\), which also does not generate a compact multiplication operator and satisfies 
    \(\langle\tilde{a},(T\rho)(\theta)\rangle_{L^{2}(\setR)} = h\langle a, (T_{\mathrm{semi}}\rho)(\theta)\rangle_{\ell^1(\setZ)}\).
\end{remark}
\begin{remark}
    To the best of our knowledge, even the question whether the operator is continuous with respect to the norm topologies appears to be still open. Its relationship to the Radon transform \cite{Vogel:89} on \(L^{2}(\setR^{2})\) suggests, that on \(\BopII\) it is injective \cite{Ludwig:66} but unbounded 
    with respect to $L^2$ as codomain  \cite{Solmon:76}. 
\end{remark}
One way to still apply our theory, is to use decay conditions on the matrix elements $\rho_{m,n}$ 
and apply \Cref{lemm:embedding}.  
E.g., similar to \cite{Aubry:08}, we could assume that $\rho$ belongs to a weighted $\BopII$ space 
with norm $\|\rho\|_{\hat{\calB}}:=\|(\rho_{m,n}\exp(B(m+n)^{r/2}))_{m,n}\|_{\BopII}$ for some $B>0$ and $r\in (0,2)$
and define $\dom T$ as a closed ball in $\hat{\calB}$. As $\hat{\calB}$ is a separable Hilbert space,  
$\dom T$ is weakly sequentially compact, 
and from $\|\rho\|_{\BopI} = \sup\{|\tr(\sigma\rho)|:\|\sigma\|_{\Bop}\leq 1,\,\sigma\in\Kop\}
\leq \sum_{m,n}|\rho_{m,n}|$ it follows from the Cauchy-Schwarz inequality that $\hat{\calB}$ is continuously embedded in $\BopI$. 
To prove continuity of $T$ in \eqref{eq:T_homodyne}, we can proceed as in \cite{Aubry:08} and 
use the fact that $T\rho$ is the Radon transform of the Wigner function $W_{\rho}$ which
satisfies $\|W_{\rho}\|_{L^2(\setR^2)}^2=\|\rho\|_{\BopII}^2/(2\pi)$ \cite[eq.(5)]{Aubry:08} and 
exhibits an exponential decay controlled by $\sup_{m,n}||\rho_{m,n}|\exp(B'(m+n)^{r/2})$ 
\cite[Prop. 1]{Aubry:08}. 
For $B'<B$ the latter norm can be bounded by $\|\rho\|_{\hat{\calB}}$ using the Cauchy-Schwarz inequality. 

In principle, a uniform hard constraint on $\|\rho\|_{\hat{\calB}}$ for a sequence of data with vanishing noise level would have to be implemented numerically to guarantee convergence of the corresponding estimators, which 
is numerically inconvenient. 

A different approach to obtaining a well-defined operator within our framework avoiding strong a-priori assumptions
on $\rho$,  is to replace the Dirac delta functions by any orthonormal basis \(\{v_{l},l\in\setZ\}\) of \(L^{2}(\setR,\setC)\) and get \(\tilde{T}:\tilde{\BopI}\rightarrow L^2([0,\pi),\ell^{1}(\setZ))\) by
\[(\tilde{T}\rho)(\theta,l)=\langle v_{l},U_{\theta}\rho U_{\theta}^{*}v_{l}\rangle.
\]
Note that this involves integration over squares of the Schwartz kernel of $U_{\theta}\rho U_{\theta}^{*}$ rather than integration only over the diagonal. 
To still fit the measurement data, the choice of \(v_{l}\) should be well localized. One could for example use an appropriately scaled Haar wavelet basis and in the end arrive at an operator which is close to \(T_{\mathrm{semi}}\), particularly when passing to the fully discrete setting with sufficiently small bin size.

After these modification we can easily see, that \(\tilde{T}\) is weak-\(*\) to weak-\(*\) continuous by \Cref{lem:cont_op}. The injectivity of  \(\tilde{T}\) depends on the choice of the actual basis \(v_{l}\) but as we stated before also in the non-injective case we still get the results from \Cref{the:regularity} at least for a subsequence.

As an example we consider the reconstruction of a Schrödinger's cat state with \(a=3\) which is the superposition of the two coherent states \(|a\rangle+|-a\rangle\). It is a standard example for a state that exhibits non-classical behavior \cite{Leonhardt:95}. 
We consider a density matrix of size \(21\times21\) and use \(60\) different phases and choose \(v_{l}\) to be indicator functions of the intervals \([-5+\frac{l}{12},-5+\frac{l+1}{12}]\) for \(0\leq l<120\) such that we cover the interval \([-5,5]\) which contains the relevant part of the data. 
We generate Poisson distributed synthetic data using the operator \(T_{\mathrm{semi}}\). The stopping rule, \(\delta\) and \(\alpha\) are then chosen in the same way as in the PINEM example. The restriction on the number of iterations is again only relevant for the last three data points with the Kullback-Leibler divergence as a data fidelity functional. 
We perform the reconstruction with different data fidelity functionals and we use either \(T_{\mathrm{semi}}\) or \(\tilde{T}\) and compare the results. In \Cref{fig:Homodyne_conv} we can see that if the noise is not too small, both operators lead to very similar results. 
However, as expected, at some point their difference becomes significant and only using \(T_{\mathrm{semi}}\) still leads to further convergence. 

\begin{figure}
\begin{subfigure}[b]{0.45\textwidth}
    \centering
    \input{figures/Homodyne_L2_far.tex}
    \caption{\(L^{2}\)-norm}
\end{subfigure}
\begin{subfigure}[b]{0.45\textwidth}
    \centering
    \input{figures/Homodyne_KL_far.tex}
    \caption{Kullback-Leibler divergence}
\end{subfigure}
\caption{Convergence behavior for different data fidelity functionals. Dotted lines indicate results for the exact forward operator.}
\label{fig:Homodyne_conv}
\end{figure}
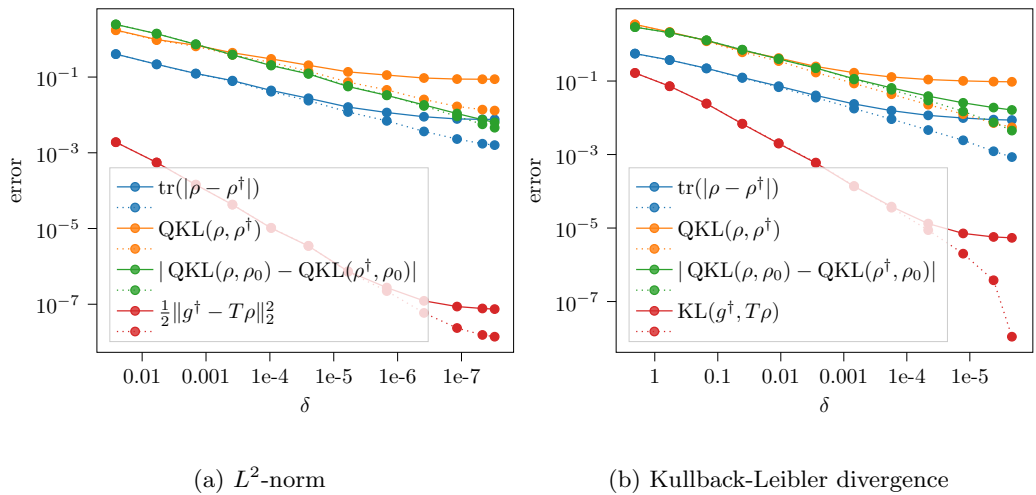

%% file: figures/PINEM_L2_far.tex
\begin{tikzpicture}[scale=0.8]

\definecolor{crimson2143940}{RGB}{214,39,40}
\definecolor{darkgray176}{RGB}{176,176,176}
\definecolor{darkorange25512714}{RGB}{255,127,14}
\definecolor{forestgreen4416044}{RGB}{44,160,44}
\definecolor{lightgray204}{RGB}{204,204,204}
\definecolor{steelblue31119180}{RGB}{31,119,180}

\begin{axis}[
legend cell align={left},
legend style={
  fill opacity=0.8,
  draw opacity=1,
  text opacity=1,
  at={(0.03,0.03)},
  anchor=south west,
  draw=lightgray204
},
log basis x={10},
log basis y={10},
tick align=outside,
tick pos=left,
x grid style={darkgray176},
xlabel={\(\displaystyle \delta\)},
xmin=0.88266905956053, xmax=426322593.912704,
xmode=log,
xtick style={color=black},
xtick={1,10,100,1000,10000,100000,1000000,10000000,100000000},
xticklabels={1.0,0.1,0.01,0.001,1e-4,1e-5,1e-6,1e-7,1e-8},
y grid style={darkgray176},
ylabel={error},
ymin=5.17078248827539e-10, ymax=24.6500402614802,
ymode=log,
ytick style={color=black},
ytick={1e-12,1e-10,1e-08,1e-06,0.0001,0.01,1,100,10000},
yticklabels={
  \(\displaystyle {10^{-12}}\),
  \(\displaystyle {10^{-10}}\),
  \(\displaystyle {10^{-8}}\),
  \(\displaystyle {10^{-6}}\),
  \(\displaystyle {10^{-4}}\),
  \(\displaystyle {10^{-2}}\),
  \(\displaystyle {10^{0}}\),
  \(\displaystyle {10^{2}}\),
  \(\displaystyle {10^{4}}\)
}
]
\addplot [semithick, steelblue31119180, mark=*, mark size=2, mark options={solid}]
table {%
2.19039514989327 1.23628791562812
4.98879045221715 0.918366691403635
9.79033615991703 0.781443631666918
21.4478233700518 0.568317698967709
42.4678528549667 0.462759847123509
81.7264055457022 0.351863558659117
166.24980827564 0.277344009717243
339.142417028212 0.215184535440002
683.208249277772 0.163889201109207
1371.80340618329 0.134022417807741
2529.1395945367 0.0990160505334247
5697.80647661297 0.070225740750049
9981.12525494408 0.0574002917805102
22325.6466220019 0.0402634045536201
42311.2107265164 0.0318087555092146
87659.2806228682 0.0227890856910244
174117.984744357 0.0185941099300559
341918.519333409 0.0139804633180976
712749.824289369 0.0101077706661804
1285914.72795044 0.00932042269963306
2686807.38627157 0.00609751146394149
5459487.60683369 0.00489033361110336
11350565.2633126 0.00347478722348898
22966541.0557599 0.00285874534663819
41870713.5722433 0.00216522394581941
90466303.4469596 0.00177627171875646
171796291.211048 0.00122668106935575
};
\addlegendentry{\(\tr(|\rho-\rho^{\dagger}|)\)}
\addplot [semithick, darkorange25512714, mark=*, mark size=2, mark options={solid}]
table {%
2.19039514989327 8.06198742534603
4.98879045221715 5.4622382944007
9.79033615991703 4.20657351244423
21.4478233700518 2.62603744730501
42.4678528549667 2.01024482435521
81.7264055457022 1.33160516940855
166.24980827564 1.02244671265517
339.142417028212 0.780482475784809
683.208249277772 0.56344919286965
1371.80340618329 0.402915627552368
2529.1395945367 0.243020951845962
5697.80647661297 0.17453807012231
9981.12525494408 0.138646315659075
22325.6466220019 0.0989702013587797
42311.2107265164 0.0710399157182044
87659.2806228682 0.0478053282358133
174117.984744357 0.0359784557654905
341918.519333409 0.0251243483594998
712749.824289369 0.0169414807731412
1285914.72795044 0.0134060589686726
2686807.38627157 0.00896903050885434
5459487.60683369 0.00622434969589059
11350565.2633126 0.0046304525110884
22966541.0557599 0.00314439385734167
41870713.5722433 0.00232291392043971
90466303.4469596 0.00156218389590179
171796291.211048 0.00109582364076743
};
\addlegendentry{\(\QKL(\rho,\rho^{\dagger})\)}
\addplot [semithick, forestgreen4416044, mark=*, mark size=2, mark options={solid}]
table {%
2.19039514989327 0.235637641046254
4.98879045221715 0.323445909257861
9.79033615991703 0.344429910123456
21.4478233700518 0.250745942940003
42.4678528549667 0.257816789480849
81.7264055457022 0.166499925903548
166.24980827564 0.153288435863059
339.142417028212 0.158548753459289
683.208249277772 0.142633035944009
1371.80340618329 0.105792148514665
2529.1395945367 0.0745644954789908
5697.80647661297 0.0590097347822556
9981.12525494408 0.0478795663233575
22325.6466220019 0.0377643870251969
42311.2107265164 0.0287613186932862
87659.2806228682 0.0220633008522606
174117.984744357 0.018634128170246
341918.519333409 0.0143378609305436
712749.824289369 0.00990357776039907
1285914.72795044 0.00836366854393056
2686807.38627157 0.00672362356179956
5459487.60683369 0.00499666906433571
11350565.2633126 0.00368667685299773
22966541.0557599 0.00275385279104778
41870713.5722433 0.00200559871553407
90466303.4469596 0.0015600673189895
171796291.211048 0.00128769030128373
};
\addlegendentry{\(|\QKL(\rho,\rho_{0})-\QKL(\rho^{\dagger},\rho_{0})|\)}
\addplot [semithick, crimson2143940, mark=*, mark size=2, mark options={solid}]
table {%
2.19039514989327 0.0268818348279181
4.98879045221715 0.0122213445421725
9.79033615991703 0.00822762294914741
21.4478233700518 0.00362130975566543
42.4678528549667 0.00227791031369707
81.7264055457022 0.00118083873188486
166.24980827564 0.000657809829725585
339.142417028212 0.000386898728937549
683.208249277772 0.000210139293191879
1371.80340618329 0.000114889714525362
2529.1395945367 5.84495807934328e-05
5697.80647661297 2.88611617881446e-05
9981.12525494408 1.69509057865512e-05
22325.6466220019 8.12148787216145e-06
42311.2107265164 4.2585686515571e-06
87659.2806228682 2.12694777705431e-06
174117.984744357 1.15354093148001e-06
341918.519333409 6.38411812520186e-07
712749.824289369 2.87450249690689e-07
1285914.72795044 1.69419238236696e-07
2686807.38627157 8.94948176530247e-08
5459487.60683369 4.59456032951379e-08
11350565.2633126 2.19775182935217e-08
22966541.0557599 1.18621948613515e-08
41870713.5722433 5.80598502601433e-09
90466303.4469596 3.09364275803685e-09
171796291.211048 1.58099969392937e-09
};
\addlegendentry{\(\frac{1}{2}\|g^{\dagger}-T\rho\|^{2}_{2}\)}
\end{axis}

\end{tikzpicture}

%% file: figures/PINEM_KL_far.tex
\begin{tikzpicture}[scale=0.8]

\definecolor{crimson2143940}{RGB}{214,39,40}
\definecolor{darkgray176}{RGB}{176,176,176}
\definecolor{darkorange25512714}{RGB}{255,127,14}
\definecolor{forestgreen4416044}{RGB}{44,160,44}
\definecolor{lightgray204}{RGB}{204,204,204}
\definecolor{steelblue31119180}{RGB}{31,119,180}

\begin{axis}[
legend cell align={left},
legend style={
  fill opacity=0.8,
  draw opacity=1,
  text opacity=1,
  at={(0.03,0.03)},
  anchor=south west,
  draw=lightgray204
},
log basis x={10},
log basis y={10},
tick align=outside,
tick pos=left,
x grid style={darkgray176},
xlabel={\(\displaystyle \delta\)},
xmin=0.0784020515727455, xmax=5004629.3799217,
xmode=log,
xtick style={color=black},
xtick={1,10,100,1000,10000,100000,1000000},
xticklabels={1.0,0.1,0.01,0.001,1e-4,1e-5,1e-6},
y grid style={darkgray176},
ylabel={error},
ymin=4.41348907740532e-08, ymax=2.1690926274976,
ymode=log,
ytick style={color=black},
ytick={1e-09,1e-08,1e-07,1e-06,1e-05,0.0001,0.001,0.01,0.1,1,10,100},
yticklabels={
  \(\displaystyle {10^{-9}}\),
  \(\displaystyle {10^{-8}}\),
  \(\displaystyle {10^{-7}}\),
  \(\displaystyle {10^{-6}}\),
  \(\displaystyle {10^{-5}}\),
  \(\displaystyle {10^{-4}}\),
  \(\displaystyle {10^{-3}}\),
  \(\displaystyle {10^{-2}}\),
  \(\displaystyle {10^{-1}}\),
  \(\displaystyle {10^{0}}\),
  \(\displaystyle {10^{1}}\),
  \(\displaystyle {10^{2}}\)
}
]
\addplot [semithick, steelblue31119180, mark=*, mark size=2, mark options={solid}]
table {%
0.177460694529429 0.577629485844541
0.627372637363034 0.363981267832963
2.66373372372582 0.193071786706016
9.88526926316423 0.111030238465943
38.2864353773458 0.0622691023831853
152.279702027429 0.0326296877827974
577.226848469127 0.0206364560671324
9178.27714122686 0.00594873505512874
139034.630640046 0.00191438985098541
2211042.91171378 0.000758241282494246
};
\addlegendentry{\(\tr(|\rho-\rho^{\dagger}|)\)}
\addplot [semithick, darkorange25512714, mark=*, mark size=2, mark options={solid}]
table {%
0.177460694529429 0.9697608376021
0.627372637363034 0.380533006258393
2.66373372372582 0.105030993006434
9.88526926316423 0.0405732200611435
38.2864353773458 0.0172208954623139
152.279702027429 0.00532048806403695
577.226848469127 0.00235297345635832
9178.27714122686 0.000421058213557535
139034.630640046 6.91658628142466e-05
2211042.91171378 1.79163244231617e-05
};
\addlegendentry{\(\QKL(\rho,\rho^{\dagger})\)}
\addplot [semithick, forestgreen4416044, mark=*, mark size=2, mark options={solid}]
table {%
0.177460694529429 0.47389843072941
0.627372637363034 0.316045101571781
2.66373372372582 0.126789643514799
9.88526926316423 0.0780690703725746
38.2864353773458 0.045175261589196
152.279702027429 0.0215525587627128
577.226848469127 0.0132128064942241
9178.27714122686 0.00417840150495152
139034.630640046 0.000851788310924473
2211042.91171378 0.000317653859727152
};
\addlegendentry{\(|\QKL(\rho,\rho_{0})-\QKL(\rho^{\dagger},\rho_{0})|\)}
\addplot [semithick, crimson2143940, mark=*, mark size=2, mark options={solid}]
table {%
0.177460694529429 0.2440984407769
0.627372637363034 0.0829056826493324
2.66373372372582 0.0196508188086152
9.88526926316423 0.00560436954899948
38.2864353773458 0.00154932162715486
152.279702027429 0.000443423635034325
577.226848469127 0.000142480614437429
9178.27714122686 1.0855766600369e-05
139034.630640046 6.81296759215553e-07
2211042.91171378 9.87178100841091e-08
};
\addlegendentry{\(\KL(y^{\dagger},T\rho)\)}
\end{axis}

\end{tikzpicture}

%% file: figures/Homodyne_L2_far.tex
\begin{tikzpicture}[scale=0.8]

\definecolor{crimson2143940}{RGB}{214,39,40}
\definecolor{darkgray176}{RGB}{176,176,176}
\definecolor{darkorange25512714}{RGB}{255,127,14}
\definecolor{forestgreen4416044}{RGB}{44,160,44}
\definecolor{lightgray204}{RGB}{204,204,204}
\definecolor{steelblue31119180}{RGB}{31,119,180}

\begin{axis}[
legend cell align={left},
legend style={
  fill opacity=0.8,
  draw opacity=1,
  text opacity=1,
  at={(0.03,0.03)},
  anchor=south west,
  draw=lightgray204
},
log basis x={10},
log basis y={10},
tick align=outside,
tick pos=left,
x grid style={darkgray176},
xlabel={\(\displaystyle \delta\)},
xmin=19.5730750303187, xmax=65447613.3299811,
xmode=log,
xtick style={color=black},
xtick={100,1000,10000,100000,1000000,10000000},
xticklabels={0.01,0.001,1e-4,1e-5,1e-6,1e-7},
y grid style={darkgray176},
ylabel={error},
ymin=5.37160182264689e-09, ymax=6.35636517281559,
ymode=log,
ytick style={color=black},
ytick={1e-11,1e-09,1e-07,1e-05,0.001,0.1,10,1000},
yticklabels={
  \(\displaystyle {10^{-11}}\),
  \(\displaystyle {10^{-9}}\),
  \(\displaystyle {10^{-7}}\),
  \(\displaystyle {10^{-5}}\),
  \(\displaystyle {10^{-3}}\),
  \(\displaystyle {10^{-1}}\),
  \(\displaystyle {10^{1}}\),
  \(\displaystyle {10^{3}}\)
}
]
\addplot [semithick, steelblue31119180, mark=*, mark size=2, mark options={solid}]
table {%
38.7449572495888 0.404488641398678
168.604133574234 0.217313394063999
691.684258658193 0.12430259732215
2591.66121944345 0.0804091616374034
10464.4836687182 0.0443092542978196
40170.5043287831 0.0275383266456111
167318.077634455 0.0161317879253513
676580.327522761 0.011569164539409
2585393.43498426 0.0089514848813677
8513768.80595343 0.00783344434773956
21268047.4228038 0.00764481902910348
33062652.2055745 0.00758439418186847
};
\addlegendentry{\(\tr(|\rho-\rho^{\dagger}|)\)}
\addplot [semithick, steelblue31119180, dotted, mark=*, mark size=2, mark options={solid}]
table {%
38.7449572495888 0.403370140629862
168.604133574234 0.21601608604805
691.684258658193 0.122517121034217
2591.66121944345 0.0778748223752076
10464.4836687182 0.0410432174504019
40170.5043287831 0.0238247471333078
167318.077634455 0.0119552096030684
676580.327522761 0.00697044806239015
2585393.43498426 0.00366278082750012
8513768.80595343 0.00230828363413521
21268047.4228038 0.00173564010067182
33062652.2055745 0.00159425611087924
};
\addlegendentry{}
\addplot [semithick, darkorange25512714, mark=*, mark size=2, mark options={solid}]
table {%
38.7449572495888 1.74438235723073
168.604133574234 0.979071507977843
691.684258658193 0.69284327889831
2591.66121944345 0.438639612869815
10464.4836687182 0.301813132501362
40170.5043287831 0.205063861983544
167318.077634455 0.137079308989649
676580.327522761 0.112898714440321
2585393.43498426 0.0943061726763979
8513768.80595343 0.0884987579176797
21268047.4228038 0.0876458728213007
33062652.2055745 0.0878787137351928
};
\addlegendentry{\(\QKL(\rho,\rho^{\dagger})\)}
\addplot [semithick, darkorange25512714, dotted, mark=*, mark size=2, mark options={solid}]
table {%
38.7449572495888 1.72045783753107
168.604133574234 0.944393003794099
691.684258658193 0.646927601423789
2591.66121944345 0.385913779367157
10464.4836687182 0.243318913093404
40170.5043287831 0.143336515713395
167318.077634455 0.0736869702443317
676580.327522761 0.0458533942425858
2585393.43498426 0.0256050527798027
8513768.80595343 0.0167669855092332
21268047.4228038 0.0137972306288158
33062652.2055745 0.0130313869702134
};
\addlegendentry{}
\addplot [semithick, forestgreen4416044, mark=*, mark size=2, mark options={solid}]
table {%
38.7449572495888 2.45544986452759
168.604133574234 1.39609625612907
691.684258658193 0.729659560409808
2591.66121944345 0.385492254535126
10464.4836687182 0.20138100182114
40170.5043287831 0.120875926839242
167318.077634455 0.0563121501634427
676580.327522761 0.0328661328429867
2585393.43498426 0.018362834209861
8513768.80595343 0.0108677099652104
21268047.4228038 0.00747450311362474
33062652.2055745 0.00638728441215264
};
\addlegendentry{\(|\QKL(\rho,\rho_{0})-\QKL(\rho^{\dagger},\rho_{0})|\)}
\addplot [semithick, forestgreen4416044, dotted, mark=*, mark size=2, mark options={solid}]
table {%
38.7449572495888 2.45920636710743
168.604133574234 1.40320129362842
691.684258658193 0.737857266123362
2591.66121944345 0.392959679428161
10464.4836687182 0.207494348598801
40170.5043287831 0.125256165371182
167318.077634455 0.0584765194903021
676580.327522761 0.0333608168546418
2585393.43498426 0.0172495098105525
8513768.80595343 0.00916220632122577
21268047.4228038 0.00565289526378621
33062652.2055745 0.0045959719836306
};
\addlegendentry{}
\addplot [semithick, crimson2143940, mark=*, mark size=2, mark options={solid}]
table {%
38.7449572495888 0.00190651364488668
168.604133574234 0.000553770550971346
691.684258658193 0.000143412608699797
2591.66121944345 4.25435779874154e-05
10464.4836687182 1.028211665011e-05
40170.5043287831 3.45125138112771e-06
167318.077634455 7.40766499838173e-07
676580.327522761 2.75268539278827e-07
2585393.43498426 1.22914502873811e-07
8513768.80595343 8.68831650442453e-08
21268047.4228038 7.73126697054986e-08
33062652.2055745 7.44263650299914e-08
};
\addlegendentry{\(\frac{1}{2}\|g^{\dagger}-T\rho\|^{2}_{2}\)}
\addplot [semithick, crimson2143940, dotted, mark=*, mark size=2, mark options={solid}]
table {%
38.7449572495888 0.00190210514992676
168.604133574234 0.00055354473122818
691.684258658193 0.000143957160482793
2591.66121944345 4.29025232359092e-05
10464.4836687182 1.0458305223583e-05
40170.5043287831 3.50091206311468e-06
167318.077634455 7.1552700503347e-07
676580.327522761 2.23004935612953e-07
2585393.43498426 5.90077674634579e-08
8513768.80595343 2.35542423113393e-08
21268047.4228038 1.54623000611821e-08
33062652.2055745 1.38840982214381e-08
};
\addlegendentry{}
\end{axis}

\end{tikzpicture}

%% file: figures/Homodyne_KL_far.tex
\begin{tikzpicture}[scale=0.8]

\definecolor{crimson2143940}{RGB}{214,39,40}
\definecolor{darkgray176}{RGB}{176,176,176}
\definecolor{darkorange25512714}{RGB}{255,127,14}
\definecolor{forestgreen4416044}{RGB}{44,160,44}
\definecolor{lightgray204}{RGB}{204,204,204}
\definecolor{steelblue31119180}{RGB}{31,119,180}

\begin{axis}[
legend cell align={left},
legend style={
  fill opacity=0.8,
  draw opacity=1,
  text opacity=1,
  at={(0.03,0.03)},
  anchor=south west,
  draw=lightgray204
},
log basis x={10},
log basis y={10},
tick align=outside,
tick pos=left,
x grid style={darkgray176},
xlabel={\(\displaystyle \delta\)},
xmin=0.24372501156914, xmax=1000000,
xmode=log,
xtick style={color=black},
xtick={1,10,100,1000,10000,100000},
xticklabels={1,0.1,0.01,0.001,1e-4,1e-5},
y grid style={darkgray176},
ylabel={error},
ymin=4.15204652179783e-09, ymax=9.24799257056261,
ymode=log,
ytick style={color=black},
ytick={1e-11,1e-09,1e-07,1e-05,0.001,0.1,10,1000},
yticklabels={
  \(\displaystyle {10^{-11}}\),
  \(\displaystyle {10^{-9}}\),
  \(\displaystyle {10^{-7}}\),
  \(\displaystyle {10^{-5}}\),
  \(\displaystyle {10^{-3}}\),
  \(\displaystyle {10^{-1}}\),
  \(\displaystyle {10^{1}}\),
  \(\displaystyle {10^{3}}\)
}
]
\addplot [semithick, steelblue31119180, mark=*, mark size=2, mark options={solid}]
table {%
0.485263333160379 0.558108870448287
1.72317446233225 0.373158532798775
6.61146592276818 0.222216479850829
24.4682104533125 0.124786042721041
92.5023164568258 0.0725150005333798
359.598149012088 0.0405865258042499
1454.13487649823 0.0237980457198338
5710.44015262327 0.0156322106147424
21847.0839803997 0.0116481057265173
78375.4823025333 0.00984718740258033
238120.896649755 0.00891579725559884
465077.171550881 0.00867204617763544
};
\addlegendentry{\(\tr(|\rho-\rho^{\dagger}|)\)}
\addplot [semithick, steelblue31119180, dotted, mark=*, mark size=2, mark options={solid}]
table {%
0.485263333160379 0.557413823002402
1.72317446233225 0.371775055167356
6.61146592276818 0.219762102901776
24.4682104533125 0.121472766869419
92.5023164568258 0.0680058356144126
359.598149012088 0.0353951567783034
1454.13487649823 0.0178092608397489
5710.44015262327 0.00918065941950423
21847.0839803997 0.00463793337343307
78375.4823025333 0.00244011372019866
238120.896649755 0.00122829111929858
465077.171550881 0.000853442447860589
};
\addlegendentry{}
\addplot [semithick, darkorange25512714, mark=*, mark size=2, mark options={solid}]
table {%
0.485263333160379 3.47654729673242
1.72317446233225 2.15962037404373
6.61146592276818 1.2605457060607
24.4682104533125 0.666549708499212
92.5023164568258 0.422300192618392
359.598149012088 0.249314122504751
1454.13487649823 0.168282744950093
5710.44015262327 0.128689009838652
21847.0839803997 0.109750669424822
78375.4823025333 0.101027208663299
238120.896649755 0.0964386029520392
465077.171550881 0.0953102615851482
};
\addlegendentry{\(\QKL(\rho,\rho^{\dagger})\)}
\addplot [semithick, darkorange25512714, dotted, mark=*, mark size=2, mark options={solid}]
table {%
0.485263333160379 3.45205302340579
1.72317446233225 2.12414260982294
6.61146592276818 1.20906478644564
24.4682104533125 0.605617383278535
92.5023164568258 0.34960817121582
359.598149012088 0.170661262991048
1454.13487649823 0.0858257809825796
5710.44015262327 0.0437843254061965
21847.0839803997 0.0227126716224337
78375.4823025333 0.0125661857323971
238120.896649755 0.00720420519777202
465077.171550881 0.00550343846710498
};
\addlegendentry{}
\addplot [semithick, forestgreen4416044, mark=*, mark size=2, mark options={solid}]
table {%
0.485263333160379 2.92829102826097
1.72317446233225 2.0676664815382
6.61146592276818 1.27815973035089
24.4682104533125 0.708105158340611
92.5023164568258 0.397755251710605
359.598149012088 0.226131260555158
1454.13487649823 0.115110123863476
5710.44015262327 0.0649857316026292
21847.0839803997 0.0388796970013106
78375.4823025333 0.0255329553330403
238120.896649755 0.0189648099632729
465077.171550881 0.0163211561646497
};
\addlegendentry{\(|\QKL(\rho,\rho_{0})-\QKL(\rho^{\dagger},\rho_{0})|\)}
\addplot [semithick, forestgreen4416044, dotted, mark=*, mark size=2, mark options={solid}]
table {%
0.485263333160379 2.93390719740806
1.72317446233225 2.07487725614965
6.61146592276818 1.28536800507698
24.4682104533125 0.713790670458272
92.5023164568258 0.400446846376981
359.598149012088 0.225184304336195
1454.13487649823 0.110865904460647
5710.44015262327 0.0578759066741092
21847.0839803997 0.029695588992241
78375.4823025333 0.0149035811293636
238120.896649755 0.00748196850943117
465077.171550881 0.00448235277834375
};
\addlegendentry{}
\addplot [semithick, crimson2143940, mark=*, mark size=2, mark options={solid}]
table {%
0.485263333160379 0.166028215053251
1.72317446233225 0.072238346099957
6.61146592276818 0.0242491785592536
24.4682104533125 0.00683729830829763
92.5023164568258 0.00198820974914399
359.598149012088 0.00058899691598014
1454.13487649823 0.000136405753481778
5710.44015262327 3.82627141642105e-05
21847.0839803997 1.31965991282431e-05
78375.4823025333 7.13923427647874e-06
238120.896649755 5.72482698478412e-06
465077.171550881 5.4106574920152e-06
};
\addlegendentry{\(\KL(g^{\dagger},T\rho)\)}
\addplot [semithick, crimson2143940, dotted, mark=*, mark size=2, mark options={solid}]
table {%
0.485263333160379 0.165945833962943
1.72317446233225 0.0723413615785122
6.61146592276818 0.0243753411358821
24.4682104533125 0.00692167550956154
92.5023164568258 0.00203109144413441
359.598149012088 0.0006044419496253
1454.13487649823 0.0001393145031823
5710.44015262327 3.57817927690045e-05
21847.0839803997 8.77745543058011e-06
78375.4823025333 2.00773277096829e-06
238120.896649755 3.79529250139241e-07
465077.171550881 1.10448937146078e-08
};
\addlegendentry{}
\end{axis}

\end{tikzpicture}

%% file: conclusions.tex
As a main theoretical result we have demonstrated the regularizing property of the quantum relative entropy functional as a penalty term in variational regularization methods.  
In contrast to some competing methods, it does not depend on the choice of a basis. 
We have further established all properties of the quantum relative entropy 
in finite dimensional settings required for the application of  algorithms from convex optimization. 
All assumptions of our setting have been verified for a forward operator describing PINEM experiments,  
and stability and convergence of the corresponding density matrix estimators as the noise level tends to $0$ was demonstrated numerically.  
For homodyne tomography, the natural exact forward operator does not fit into our framework but a suitable semi-discrete approximation still leads to partial results. In practice using the exact forward operator works well and quantum relative entropy regularization reliably reconstructs the correct solutions.

The present work suggests a number of new and potentially fruitful lines of research:
A natural question concerns statistical consistency for Poisson data as an analog to the regularizing property 
for deterministic noise models. This seems to require more restrictive assumptions on the forward operator 
and/or the set of possible solutions. A further potential line of research concerns rates of convergence 
of quantum relative entropy penalized estimators under decay conditions on the density matrix elements as 
shown for a projection estimator for homodyne tomography in \cite{Aubry:08} and for the SQUIRRELS estimator for PINEM in \cite{Shi:20}. Furthermore, a proper functional analytic setting for homodyne tomography without 
artificial constraints on the set of density matrices would be interest as well as
applications to further tomographic settings such as homodyne tomography with imperfect detection efficiency (see \cite[Section~5.3]{Leonhardt:95}) or heterodyne tomography (see \cite{Richter:98}).

Finally, we emphasize that positive semidefinite matrices and operators arise in many contexts beyond quantum state reconstruction, for instance as covariance operators.
The results of this paper provide a general new regularization framework for inverse problems in which the unknown is a positive semidefinite operator.

%% file: appendix.tex
\begin{theorem}[General duality gap]\label{the:gen_duality_gap}
    Let \(X,Y\) be Banach spaces, \(T\in\calL(X,Y)\) and let \(\calR:X\rightarrow\setR\cup\{\infty\}\) and \(\calS:Y\rightarrow\setR\cup\{\infty\}\) be convex functionals. Furthermore, for \(\alpha>0\) let
    \[f^{\dagger}\in\underset{f}{\argmin}\frac{1}{\alpha}\calS(Tf)+\calR(f)\]
    such that \(\partial\calR(f^{\dagger})\) and \(\partial\calS(Tf^{\dagger})\) are non-empty. Then for all \(f\in X\), \(p\in Y^{*}\) and \(g\in\partial\calR(f^{\dagger})\) the inequality
    \[\Delta_{\calR}^{g}(f,f^{\dagger})\leq\frac{1}{\alpha}\calS(Tf)+\calR(f)+\calR^{*}(T^{*}p)+\frac{1}{\alpha}\calS^{*}(-\alpha p)\]
    holds.
\end{theorem}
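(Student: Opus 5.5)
The plan is to split the right-hand side into a weak-duality part and an optimality part. By the definition of the Bregman divergence,
\[
\Delta_{\calR}^{g}(f,f^{\dagger})=\calR(f)-\calR(f^{\dagger})-\langle g,f-f^{\dagger}\rangle .
\]
Subtracting this from the claimed right-hand side, the inequality is equivalent to
\[
0\leq \frac{1}{\alpha}\calS(Tf)+\calR(f^{\dagger})+\langle g,f-f^{\dagger}\rangle+\calR^{*}(T^{*}p)+\frac{1}{\alpha}\calS^{*}(-\alpha p).
\]
I would bound the two conjugate terms using only the point $f^{\dagger}$ and no optimality. Fenchel--Young for $\calR$ at $f^{\dagger}$ gives $\calR^{*}(T^{*}p)\geq\langle T^{*}p,f^{\dagger}\rangle-\calR(f^{\dagger})$. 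Fenchel--Young for $\calS$ at $Tf^{\dagger}$, after scaling by $1/\alpha$, gives $\frac{1}{\alpha}\calS^{*}(-\alpha p)\geq-\langle p,Tf^{\dagger}\rangle-\frac{1}{\alpha}\calS(Tf^{\dagger})$. Adding these, the pairings cancel and
\[
\calR^{*}(T^{*}p)+\frac{1}{\alpha}\calS^{*}(-\alpha p)\geq-\calR(f^{\dagger})-\frac{1}{\alpha}\calS(Tf^{\dagger})
\]
for every $p\in Y^{*}$. This is the weak-duality inequality evaluated at $f^{\dagger}$. It reduces the claim to
\[
\frac{1}{\alpha}\bigl(\calS(Tf)-\calS(Tf^{\dagger})\bigr)+\langle g,f-f^{\dagger}\rangle\geq0 .
\]
This last inequality says exactly that $-g\in\partial\bigl(\tfrac{1}{\alpha}\calS\circ T\bigr)(f^{\dagger})$, which is the optimality condition of the primal problem. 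To obtain it I would use Fermat's rule, $0\in\partial\bigl(\tfrac{1}{\alpha}\calS\circ T+\calR\bigr)(f^{\dagger})$. I would then apply the sum rule $\partial(A+B)=\partial A+\partial B$, using the assumed non-emptiness of $\partial\calR(f^{\dagger})$ and $\partial\calS(Tf^{\dagger})$ as the qualification. This yields $0=g+h$ with $g\in\partial\calR(f^{\dagger})$ and $h\in\partial\bigl(\tfrac{1}{\alpha}\calS\circ T\bigr)(f^{\dagger})$, and the subgradient inequality for $h=-g$ is precisely the displayed inequality.

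The main obstacle is this last step: the sum rule produces \emph{some} $g\in\partial\calR(f^{\dagger})$ with $-g\in\partial(\tfrac1\alpha\calS\circ T)(f^{\dagger})$, not every $g$. I would close the gap by showing that $\partial\calR(f^{\dagger})$ is a singleton, so the $g$ from Fermat's rule is the only possible one. This holds in the setting where the theorem is used, namely \Cref{the:duality_gap}. There $\calR=\widehat{\QKL}(\cdot,\hat{\rho}_{0})$ and $f^{\dagger}=\hat\rho_\alpha$ has full rank, so \Cref{lem:QKL_subdiff_disc} gives $\partial\calR(f^{\dagger})=\{\ln\hat\rho_\alpha-\ln\hat\rho_0\}$.

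I should flag that without such uniqueness the claim for an arbitrary subgradient $g$ fails. Take $\calR=|\cdot|$ on $\setR$, $f^{\dagger}=0$ and $\calS\circ T\equiv0$. Then $f^{\dagger}$ is a minimizer and every $g\in[-1,1]$ is a subgradient. Choosing $f=1$, $p=0$ and $g=-1$ gives left-hand side $2$ and right-hand side $1$. So I would state explicitly that the proof applies to the subgradient singled out by the optimality condition, which is every $g$ whenever $\partial\calR(f^{\dagger})$ is a singleton.
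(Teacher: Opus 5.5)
Your route is the paper's. The paper also combines weak duality at $f^\dagger$ with the optimality condition. There it is written as $g=-\tfrac{1}{\alpha}T^*q$ for some $q\in\partial\calS(Tf^\dagger)$, which gives $\Delta_{\calR}^{g}(f,f^\dagger)\le\Delta_{\calR}^{g}(f,f^\dagger)+\tfrac{1}{\alpha}\Delta_{\calS}^{q}(Tf,Tf^\dagger)=\calJ(f)-\calJ(f^\dagger)\le\calJ(f)+\calJ_*(p)$. The inequality $\Delta_{\calR}^{g}(f,f^\dagger)\le\calJ(f)-\calJ(f^\dagger)$ is exactly your reduced inequality. Working with $\partial(\tfrac{1}{\alpha}\calS\circ T)(f^\dagger)$ has a mild advantage: you need only a sum rule, not also the chain rule $\partial(\calS\circ T)(f^\dagger)=T^*\partial\calS(Tf^\dagger)$. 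Your diagnosis of the quantifier is correct and goes beyond the paper. Its proof likewise produces only one $g$ (``by our assumptions there exists $g\in\partial\calR(f^\dagger)$ and $q\in\partial\calS(Tf^\dagger)$\dots''). Your $\calR=|\cdot|$ example is a valid counterexample to the statement as written. The singleton property from \Cref{lem:QKL_subdiff_disc} is what rescues \Cref{the:duality_gap} as far as the choice of $g$ is concerned.

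The genuine gap, which you share with the paper, is the sum-rule step. You use ``non-emptiness of $\partial\calR(f^\dagger)$ and $\partial\calS(Tf^\dagger)$ as the qualification,'' but non-emptiness is not a constraint qualification. Without a real one, even the weaker claim for \emph{some} $g$ fails, as the following example shows.
\begin{itemize}
\item Take $X=Y=\setR^2$, $T=I$, $\alpha=1$, and $K=\{(a,c):|a|<1,\ c\le 1-(1-a^2)^{-1}\}$. $K$ is closed and convex, but its projection onto the first coordinate is the open interval $(-1,1)$.
\item Let $\calR=\sigma_K$ be the support function of $K$, so $\calR(x,0)=|x|$, $\partial\calR(0)=K$ and $\calR^*=\iota_K$. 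Let $\calS(x,y)=\iota_{\{y=0\}}(x,y)-x+\tfrac{1}{2}x^2$, with $\iota$ the indicator function.
\item Then $f^\dagger=0$ is the unique minimizer and $\partial\calS(0)=\{-1\}\times\setR\neq\emptyset$, yet $0\notin\partial\calR(0)+\partial\calS(0)$.
\item For every $g=(a,c)\in K$, choose $f=(1-a,0)$ and $p\in K$ with first coordinate $\tfrac{1}{2}(1+a)$. The claimed inequality becomes $(1-a)^2\le\tfrac{5}{8}(1-a)^2$, which is false.
\end{itemize}

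What the argument needs is a genuine qualification, e.g.\ Moreau--Rockafellar: $\calR$ continuous at the point $f^\dagger\in\dom(\calS\circ T)$. In \Cref{the:duality_gap} this holds, because a full-rank $\hat{\rho}_{\alpha}$ lies in the open set of positive definite matrices in $\Herm(N)$, where $\widehat{\QKL}(\cdot,\hat{\rho}_{0})$ is finite and continuous. More generally, in finite dimensions a nonempty singleton $\partial\calR(f^\dagger)$ already forces $f^\dagger\in\operatorname{int}\dom\calR$, since $\partial\calR(f^\dagger)+N_{\dom\calR}(f^\dagger)\subseteq\partial\calR(f^\dagger)$. So your singleton restriction does close the proof there, but for this reason rather than the one you give.
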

\begin{proof}
Define the primal and dual objective functionals by
\begin{align*}
    \calJ(f)&=\frac{1}{\alpha}\calS(Tf)+\calR(f),\qquad 
    \calJ_{*}(p)=\calR^{*}(T^{*}p)+\frac{1}{\alpha}\calS^{*}(-\alpha p)
\end{align*}
and recall the $\calJ(f)\geq -\calJ_*(p)$ for all $f\in X$ and $p\in Y^*$ by weak Rockafellar-Fenchel 
duality, see \cite{Rockafellar:67}. Then for all \(f,p\) it holds by Young's inequality
\begin{align*}
    \calJ(f)+\calJ_{*}(p)&=\frac{1}{\alpha}\calS(Tf)+\calR(f)+\calR^{*}(T^{*}p)+\frac{1}{\alpha}\calS^{*}(-\alpha p)\\
    &\geq \frac{1}{\alpha}\langle Tf,-\alpha p\rangle+\langle f,T^{*}p\rangle=0
\end{align*}
Now assume that \(f^{\dagger}\) is a solution to the optimization problem. Then \[0\in\partial J(f^{\dagger})=\partial \calR(f^{\dagger})+\frac{1}{\alpha}T^{*}\partial\calS(Tf^{\dagger})\]
Then by out assumptions there exists \(g\in\partial\calR(f^{\dagger})\) and \(q\in \partial\calS(Tf^{\dagger})\) such that \(-\frac{1}{\alpha}T^{*}q=g\)
This implies
\begin{align*}
    &\Delta_{\calR}^{g}(f,f^{\dagger})+\frac{1}{\alpha}\Delta_{\calS}^{q}(Tf,Tf^{\dagger})\\
    =&\calR(f)-\calR(f^{\dagger})-\langle g,f-f^{\dagger}\rangle 
    +\frac{1}{\alpha}\left(\calS(Tf)-\calS(Tf^{\dagger})-\langle q, Tf-Tf^{\dagger}\rangle\right)\\
    =&\calR(f)-\calR(f^{\dagger})-\langle g,f-f^{\dagger}\rangle
    +\frac{1}{\alpha}\left(\calS(Tf)-\calS(Tf^{\dagger})\right)- \frac{1}{\alpha}\langle T^{*}q, f-f^{\dagger}\rangle\\
    =&\calJ(f)-\calJ(f^{\dagger}).
\end{align*}
Now as the Bregman divergence is non-negative, we get that for all \(f,p\)
\begin{align*}
    \Delta_{\calR}^{g}(f,f^{\dagger})\leq\Delta_{\calR}^{g}(f,f^{\dagger})+\frac{1}{\alpha}\Delta_{\calS}^{q}(Tf,Tf^{\dagger})\leq\calJ(f)-\calJ(f^{\dagger})
    \leq \calJ(f)+\calJ_{*}(p)
\end{align*}
using weak duality in the last inequality. 
\end{proof}